\numberwithin{equation}{section}
\theoremstyle{plain}
\newtheorem{propo}{Proposition}[section]
\newtheorem{lema}[propo]{Lemma}
\theoremstyle{plain}
\newtheorem{teorema}[propo]{Theorem}
\newtheorem{cor}[propo]{Corollary}
\newtheorem{fact}[propo]{Fact}
\theoremstyle{definition}
\newtheorem{defin}[propo]{Definition}
\newtheorem{remark}[propo]{Remark}
\newcommand{\ol}[1]{\overline{#1}}
\newcommand{\id}{id}
\renewcommand{\.}{\ldots}
\newcommand{\dl}{ \Big( \kern-0.6em{ \Big( }}
\newcommand{\dr}{ \Big) \kern-0.6em{ \Big) }}
\newcommand{\hs}[2]{#1\hspace{0.03cm}\dl\hspace{0.05cm}t^{#2}\hspace{0.05cm}\dr}
\newcommand{\sqdl}{ \Big[ \kern-0.55em{ \Big[ }}
\newcommand{\sqdr}{ \Big] \kern-0.55em{ \Big] }}
\DeclareMathOperator{\tp}{tp} 
\DeclareMathOperator{\Th}{Th} 
\DeclareMathOperator{\imdeg}{imp-deg} 
\DeclareMathOperator{\Frac}{Frac} 
\renewcommand{\char}{char} 
\DeclareMathOperator{\ac}{ac} 
\DeclareMathOperator{\res}{res} 
\DeclareMathOperator{\Mon}{Mon} 
\DeclareMathOperator{\cl}{cl} 
\def\a{\alpha}
\def\b{\beta}
\def\g{\gamma}
\def\G{\Gamma}
\def\d{\delta}
\def\D{\Delta}
\newcommand{\e}{\varepsilon}
\newcommand{\f}{\varphi}
\renewcommand{\th}{\theta}
\renewcommand{\t}{\tau}
\let\lw\l
\renewcommand\l{{\lambda}}
\def\k{\kappa}
\def\S{\Sigma}
\def\i{\iota}
\def\E{\exists} 
\def\A{\forall} 
\newcommand{\sii}{\longleftrightarrow}
\newcommand{\Sii}{\Longleftrightarrow}
\def\N{\mathbb{N}} 
\def\Z{\mathbb{Z}}
\def\F{\mathbb{F}}
\def\GG{\mathbb{G}}
\def\KK{\mathbb{K}} 
\def\kk{\mathbb{k}}
\def\AA{\mathfrak{A}} 
\def\BB{\mathfrak{B}} 
\def\MM{\mathfrak{M}} 
\def\NN{\mathfrak{N}} 
\def\LL{\mathscr{L}} 
\def\O{\mathcal{O}} 
\def\m{\mathfrak{m}} 
\def\Ind#1#2{#1\setbox0=\hbox{$#1x$}\kern\wd0\hbox to 0pt{\hss$#1\mid$\hss}
\lower.9\ht0\hbox to 0pt{\hss$#1\smile$\hss}\kern\wd0}
\def\Notind#1#2{#1\setbox0=\hbox{$#1x$}\kern\wd0\hbox to 0pt{\mathchardef
\nn=12854\hss$#1\nn$\kern1.4\wd0\hss}\hbox to
0pt{\hss$#1\mid$\hss}\lower.9\ht0 \hbox to
0pt{\hss$#1\smile$\hss}\kern\wd0}
\newtheoremstyle{named}{}{}{}{}{}{}{.5em}{\normalfont{\thmnumber{#2}} \itshape\thmnote{#3.}}
\theoremstyle{named}
\newtheoremstyle{asd}{}{}{}{}{\scshape}{}{.5em}{\scshape{\thmname{#1}} \normalfont{\thmnumber{#2}}}
\theoremstyle{asd}
\titleformat{\subsubsection}{\large\bfseries}{\scshape\thesubsubsection}{0.5em}{}
\title[RUNNING TITLE]{\Large\rm RELATIVE QUANTIFIER ELIMINATION FOR SEPARABLE-ALGEBRAICALLY MAXIMAL KAPLANSKY FIELDS}
\author{PAULO ANDRÉS SOTO MORENO}
\thanks{\today}
\address{Universit\'{e} Paris Cit\'{e} and Sorbonne Universit\'{e}, CNRS, IMJ-PRG, F-75013 Paris, France}
\email{paulo.soto@imj-prg.fr}
\begin{document}

\begin{abstract}
Let $C$ be the class of separable-algebraically maximal equi-characteristic Kaplansky fields of a given imperfection degree, admitting an angular component map. We prove that the common theory of the class $C$ resplendently eliminates quantifiers down to the residue field and the value group, in a three sorted language of valued fields with a symbol for an angular component map and symbols for the parameterized lambda-functions. As a consequence, we obtain that equi-characteristic NIP and NIP$_n$ henselian fields with an angular component map resplendently eliminate field quantifiers in this language. We also prove that this elimination reduces existential formulas to existential formulas without quantifiers from the home sort. Finally, we draw several conclusions following the AKE philosophy for elements of the class $C$, including the usual AKE principles for $\equiv,\equiv_\E,\preceq,\preceq_\E$ and for relative decidability.    
\end{abstract}
\maketitle
\section{Introduction}

Quantifier elimination has been a central subject in the study of the model theory of valued fields, and has had a fruitful history. Some of the most notable results in this direction include:
\begin{itemize}
    \item Pas' relative quantifier elimination for henselian ac-valued fields of equi-characteristic 0, in the usual three sorted language of valued fields with an additional symbol for an angular component map, cf. \cite[Theorem 4.1]{pas},

    \item Basarab's relative quantifier elimination for henselian valued fields of characteristic 0, down to the so-called \emph{mixed-$k$-structures} for $k\geq0,$ cf. \cite[Theorem B]{bas},

    \item Bélair's relative quantifier elimination for unramified henselian valued fields of characteristic $(0,p),$ in a language with a compatible system of angular component maps, cf. \cite[Théorème 5.1]{bel}. Also, Bélair proves that Pas' result follows from Basarab's by means of an embedding lemma, cf. \cite[Théorème 4.1]{bel}.

    \item Kuhlmann's relative quantifier elimination for Algebraically Maximal Kaplansky equi-characteristic ac-valued fields, down to the so-called \emph{amc-structure}, cf. \cite[Theorem 2.6]{fvkqe},

    \item Flenner's relative quantifier elimination for characteristic 0 henselian fields, down to the so-called \emph{leading term structures}, cf. \cite[Proposition 4.3]{f}

    \item Halevi-Hason's relative quantifier elimination for Algebraically Maximal Kaplansky fields in a language with an angular component map, i.e. the Pas' language as above, cf. \cite[Corollary 4.6]{hh}, and

    \item Hong's quantifier elimination for Separably Closed Valued Fields of a given imperfection degree, in a one-sorted language of valued fields expanded by the parameterized $\l$-functions, cf. \cite[Theorem 4.12]{hong}.
\end{itemize}

The purpose of this article is to contribute to the study of Quantifier Elimination phenomena of valued fields, with a special focus on the class of equi-characteristic Separable-Algebraically Maximal Kaplansky fields. These fields appear naturally when studying classes of henselian valued fields satisfying classification-theoretic dividing lines like NIP and NIP$_{n},$ as given by the work of Anscombe and Jahnke in \cite{aj} and of Boissonneau in \cite{b}.  

The driving idea of this project builds on Kuhlmann's paper \cite{fvktf}, where he studies AKE principles for tame fields based on a number of \emph{embedding lemmas}. 
Following the same train of thought as his, our main theorems read as follows.

\begin{teorema}[Embedding Lemma, cf.~Theorem \ref{emblem}]
    Let $\MM,\NN$ be two models of $\texttt{SAMK}_{e}^{\l,\ac}$ and let $\AA$ be the $\LL$-substructure of $\MM$ generated by a $\l$-closed subring $A$ of $M.$ Suppose that $\MM$ is $\aleph_0$-saturated and that $\NN$ is $|M|^{+}$-saturated. 
    Let $\sigma:\kk(M)\to\kk(N)$ be a ring embedding and let $\rho:\GG(M)\to\GG(N)$ be and embedding of ordered groups. 
    If $\iota:A\to N$ is a ring embedding making $\Sigma=(\iota,\sigma|_{Av},\rho|_{vA}):\AA\to\NN$ into an $\LL$-embedding, then there is some ring embedding $\widetilde{\iota}:M\to N$ such that $\widetilde{\Sigma}:=(\widetilde{\iota},\sigma,\rho):\MM\to\NN$ is an $\LL$-embedding extending $\Sigma.$ 
\end{teorema}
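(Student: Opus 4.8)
The plan is a Zorn's-lemma maximality argument followed by a trichotomy on how a hypothetical missing element of $M$ sits over the partial image. First, replacing $A$ by its fraction field — still a $\lambda$-closed subring of $M$, since $\lambda$-functions of quotients are expressible through $\lambda$-functions and field operations, and $\iota$ extends uniquely and $\LL$-compatibly to it — I would consider the poset of pairs $(K,\iota_K)$ where $K$ is a $\lambda$-closed subfield of $M$ containing $A$ and $\iota_K\colon K\to N$ extends $\iota$ so that $(\iota_K,\sigma|_{Kv},\rho|_{vK})\colon\langle K\rangle\to\NN$ is an $\LL$-embedding, ordered by extension. Increasing unions of such pairs stay in the poset (being a $\lambda$-closed subfield and being an $\LL$-embedding are both preserved under unions of chains), so Zorn gives a maximal $(K,\iota_K)$, and it suffices to prove $K=M$: then $\widetilde\iota:=\iota_K$ and $\widetilde\Sigma:=(\iota_K,\sigma,\rho)$ are as required. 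The facts that drive the argument are that $\MM$ and $\NN$ are henselian and Kaplansky (in particular have $p$-divisible value groups), and that $M/K$ is separable because $K$ is $\lambda$-closed (a $p$-basis of $M$ stays $p$-independent over $K$).

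So suppose $K\subsetneq M$ and pick $a\in M\setminus K$; I will extend $\iota_K$ to a strictly larger $\lambda$-closed subfield, contradicting maximality. \emph{Case (i):} if $v(a)$ is not torsion modulo $vK$, then $K(a)/K$ is value-transcendental, $a$ is transcendental over $K$, and the valuation is the unique Gauss-type extension with $v(a)$ prescribed and no residue growth; choosing $b\in N$ with $v(b)=\rho(v(a))$ and $\ac(b)=\sigma(\ac(a))$ (possible since $\ac$ is onto $\kk(N)^\times$), the map $a\mapsto b$ is a valued-field embedding of $K(a)$ over $\iota_K$, which I then extend along $\lambda\text{-cl}_M(K(a))$ using the structure theory of $\lambda$-closed fields. \emph{Case (ii):} after multiplying $a$ by an element of $K$ we may assume $v(a)=0$, and if $\res(a)$ is transcendental over $\kk(K)$ then $K(a)/K$ is residue-transcendental and again rigid; pick a unit $b\in N$ with $\res(b)=\sigma(\res(a))$ transcendental over $\sigma(\kk(K))$ (using the size/saturation of $N$) and proceed as before.

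\emph{Case (iii):} the remaining, valuation-algebraic case. Up to the finite torsion and residue-algebraic adjustments — dealt with by henselianity of $\MM,\NN$ together with the Kaplansky condition, just as in the immediate subcase below — we may assume $K(a)/K$ is immediate, hence $a$ is a pseudo-limit of a pseudo-Cauchy sequence $(a_\nu)_{\nu<\lambda}$ in $K$ with no pseudo-limit in $K$, of length $\lambda\le|K|\le|M|$, and this extension is separable. The image sequence $(\iota_K(a_\nu))$ is pseudo-Cauchy in $\iota_K(K)\subseteq N$. If it is of transcendental type, then since $\NN$ is $|M|^+$-saturated it has a pseudo-limit $b\in N$, and $a\mapsto b$ extends $\iota_K$ to $K(a)$ (any pseudo-limit works). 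If it is of algebraic type with respect to a polynomial $f$ over $K$, then $\iota_K(f)$ is separable and the transferred sequence is of algebraic type over $\iota_K(K)$; a pseudo-limit-root $b$ of $\iota_K(f)$ generates an immediate separable-algebraic extension of $N$, which cannot be proper since $\NN$ is separable-algebraically maximal, so such a $b$ already lies in $N$, and the Kaplansky hypothesis makes the immediate extension of $\iota_K(K)$ so generated unique, hence isomorphic to $K(a)$ via $a\mapsto b$. In either subcase I extend the resulting embedding of $K(a)$ along $\lambda\text{-cl}_M(K(a))$ inside $M$. This contradicts maximality, so $K=M$.

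The main obstacle is Case (iii), and inside it the algebraic-type pseudo-Cauchy analysis: forcing the pseudo-limit-root back into $N$ uses separable-algebraic maximality of $\NN$ essentially, and showing that the immediate extension obtained is well-defined, independent of the chosen root, is exactly where the Kaplansky condition on the residue field relative to imperfection degree $e$ is unavoidable — this is the separable refinement of Kaplansky's uniqueness theorem for maximal immediate extensions. A secondary technical point, recurring in all three cases, is extending a valued-field embedding along the $\lambda$-closure; here one relies on $\MM$ and $\NN$ having the same imperfection degree $e$ and on the behaviour of the parameterized $\lambda$-functions to see that $\lambda\text{-cl}_M(K(a))$ is built up canonically with no further obstruction.
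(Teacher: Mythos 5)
Your overall architecture (Zorn's lemma on $\l$-closed subfields carrying compatible $\LL$-embeddings, then a trichotomy value-transcendental / residue-transcendental / immediate, with pseudo-Cauchy sequences, saturation, Kaplansky uniqueness and separable-algebraic maximality as the engines) is the same strategy as the paper's, which organizes it instead as a fixed tower $A\subseteq F_0\subseteq\dots\subseteq F_7\subseteq M$. However, there are three genuine gaps. First, your algebraic-type subcase of Case (iii) does not work as stated: a minimal polynomial of an algebraic-type pseudo-Cauchy sequence need not be separable, and even when it is, the fact that $\NN$ is separable-algebraically maximal controls immediate extensions of $N$, not of the subfield $\iota_K(K)$ — so you cannot conclude that a root of $\iota_K(f)$ "already lies in $N$" this way. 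The paper instead invokes Fact \ref{sampc} to place $a$ in the completion of the partial domain and then replaces the sequence by a Cauchy sequence of \emph{transcendental} type (Remark \ref{trt}), reducing to the transcendental case; the genuinely algebraic immediate extensions are absorbed earlier by passing to relative algebraic closures and applying Kaplansky's uniqueness theorem for maximal immediate \emph{algebraic} extensions.

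Second, you invoke Kaplansky's uniqueness theorem for $\iota_K(K)$, but an arbitrary $\l$-closed subfield $K$ of $M$ need not be a Kaplansky field (its value group need not be $p$-divisible, its residue field need not be $p$-closed). The paper's ordering of steps is not cosmetic: it first forces the partial domain to have henselization, Artin-Schreier closure, full residue field and full value group precisely so that the uniqueness theorem applies; you would at least need to argue that a maximal element of your poset already has relatively divisible value group and relatively algebraically closed residue field, and your "finite torsion and residue-algebraic adjustments" compress exactly the steps where henselianity, the extension of the angular component along ramified Artin-Schreier extensions, and Lemma \ref{nwhit} do real work. Third, and most importantly for the $\l$-functions: "any pseudo-limit works" is false for the full language. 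When $a$ is $p$-independent in $M$ over $K$, the image $b$ must be chosen \emph{outside} $N^{p}(\iota_K K)$, otherwise $N$ fails to be separable over the image and the embedding does not commute with the parameterized $\l$-functions (Lemma \ref{sepcorr}). Producing such a $b$ that is simultaneously a pseudo-limit of the transferred sequence is where the hypothesis that $\MM$ and $\NN$ have the same Ershov degree is actually consumed, via the imperfection-degree count and Lemma \ref{vtop}; this cannot be deferred to "extending along the $\l$-closure of $K(a)$ in $M$", since a bad choice of $b$ on the $N$ side cannot be repaired by adjoining further elements of $M$.
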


\begin{teorema}[cf.~Proposition~\ref{rqe} and Corollary \ref{formbyform}]
    The theory $\texttt{SAMK}_{e}^{\lambda,\ac}$ resplendently eliminates field quantifiers in the three sorted language of valued fields expanded by symbols for an angular component map and the parameterized $\lambda$-functions.
\end{teorema}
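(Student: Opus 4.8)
The plan is to derive the resplendent field quantifier elimination from the Embedding Lemma by the usual back-and-forth criterion for relative quantifier elimination. First I would set up the standard reduction: a theory $T$ in a language $\LL$ with distinguished "auxiliary" sorts (here the residue field $\kk$ and the value group $\GG$) eliminates quantifiers from the home (valued field) sort if and only if, whenever $\MM,\NN\models T$ are sufficiently saturated and $\AA\subseteq\MM$ is an $\LL$-substructure, every $\LL$-embedding $\Sigma:\AA\to\NN$ that is "full on the auxiliary sorts" — i.e. extends to embeddings of all of $\kk(M)$ and $\GG(M)$ into $\kk(N),\GG(N)$ respecting the induced structure — can be extended to an $\LL$-embedding of $\MM$ into $\NN$. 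This is exactly the shape of Theorem~\ref{emblem}: given a $\l$-closed subring $A\subseteq M$, embeddings $\sigma:\kk(M)\to\kk(N)$ and $\rho:\GG(M)\to\GG(N)$, and a compatible $\LL$-embedding $\Sigma=(\iota,\sigma|_{Av},\rho|_{vA}):\AA\to\NN$, one obtains $\widetilde{\iota}:M\to N$ making $(\widetilde{\iota},\sigma,\rho)$ an $\LL$-embedding. So the bulk of the proof is a bookkeeping argument that packages the Embedding Lemma into the criterion.

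Next I would address the two points where the Embedding Lemma is not quite literally the criterion. The first is that the criterion wants to start from an arbitrary $\LL$-substructure $\AA\subseteq\MM$, whereas the lemma requires $A$ to be $\l$-closed; here I would note that any $\LL$-substructure is already closed under the parameterized $\l$-functions by definition of $\LL$, or else pass to the $\l$-closure, observing that $\l$-closure of a subring of $M$ stays inside $M$ and does not enlarge the residue field or value group beyond what is already in the image of $\sigma,\rho$ — so compatibility of $\Sigma$ is preserved. The second is the saturation hypotheses: the lemma asks $\MM$ to be $\aleph_0$-saturated and $\NN$ to be $|M|^{+}$-saturated, which is precisely what one arranges in the back-and-forth, so this is harmless. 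Having checked these, the quantifier elimination down to the two auxiliary sorts follows, and then \emph{resplendence} is obtained by the standard observation (going back to the treatment of resplendent QE, e.g. as in work of Rideau and others) that the criterion was proved for an arbitrary expansion of the structure on the auxiliary sorts: since the Embedding Lemma only ever used $\sigma$ and $\rho$ as abstract ring/ordered-group embeddings and never referred to any particular relation on $\kk$ or $\GG$, the same back-and-forth goes through verbatim if we add arbitrary new relation symbols to the residue field and value group sorts, which is the content of resplendence.

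For Corollary~\ref{formbyform} — that the elimination takes existential formulas to existential formulas, i.e. each field-sort quantifier can be removed from an existential formula without introducing new home-sort quantifiers — I would refine the argument by running a one-sided (existential) version of the back-and-forth: it suffices to show that if $\Sigma:\AA\to\NN$ is a compatible embedding with $\NN$ sufficiently saturated (no saturation needed on $\MM$), then $\Sigma$ extends to $\widetilde{\iota}:M\to N$; this is exactly what the Embedding Lemma gives with only the $|M|^{+}$-saturation of $\NN$ being used in an essential, "one-way" manner. Formula-by-formula elimination then follows by a compactness/type-counting argument converting "every existential formula is equivalent modulo $T$ to one without home-sort quantifiers" into the statement that the equivalence can be witnessed by a single such formula.

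The main obstacle I expect is not in this packaging step — which is routine once the Embedding Lemma is in hand — but rather in making sure the hypotheses of the Embedding Lemma genuinely match what the relative QE criterion demands, particularly the interaction between the $\l$-closure operation and the requirement that the embedding be "generated by" a subring on which $\sigma,\rho$ are already defined and compatible. One must verify that taking $\l$-closures does not force the residue field or value group to grow in a way that breaks the given embeddings $\sigma,\rho$; this is where the specific algebraic structure of separable-algebraically maximal Kaplansky fields (and the fact, presumably established earlier, that $\l$-closure is an immediate-extension-type or at least residue-and-value-preserving operation on subrings) does the real work. Once that compatibility is secured, the QE statement and its resplendent and existential refinements are formal consequences.
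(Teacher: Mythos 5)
Your overall strategy matches the paper's: reduce resplendent field quantifier elimination to an embedding-extension criterion, verify the criterion with the Embedding Lemma, and observe that resplendence is free because the lemma treats $\sigma$ and $\rho$ as abstract embeddings of whatever $\LL_\kk\,$- and $\LL_\GG\,$-structures are given. However, there is a genuine gap in the packaging step, and it is precisely the place where the paper has to do some work.

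The criterion you state --- ``every $\LL$-embedding $\Sigma:\AA\to\NN$ that is full on the auxiliary sorts extends'' --- is not a self-justifying test for field quantifier elimination, because the standard embedding test hands you an arbitrary embedding $\Sigma:\AA\to\NN$, and a priori its auxiliary components $\sigma_0:\kk(A)\to\kk(N)$ and $\rho_0:\GG(A)\to\GG(N)$ need \emph{not} extend to embeddings of the full $\kk(M)$ and $\GG(M)$. Restricting attention to those that do extend is checking a weaker statement than what the test demands, and so does not yield quantifier elimination without a further argument. The paper closes this gap by \emph{Morleyizing} the auxiliary sorts (the passage from $\LL$ to $\LL^+$ with the predicates $R_\th$, $R_\psi$): after Morleyization, any $\LL^+$-embedding of a substructure automatically makes $\sigma_0$ and $\rho_0$ \emph{partial elementary} maps with respect to $\LL_\kk$ and $\LL_\GG$, and these then extend to genuine elementary maps $\sigma:\kk(M)\to\kk(N)$ and $\rho:\GG(M)\to\GG(N)$ by $|M|^+$-saturation of $\NN$. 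Only \emph{then} are the hypotheses of the Embedding Lemma satisfied; and at the same time the Morleyization is exactly what makes the conclusion a resplendent one, per Remark~\ref{respleq}. Your proposal never mentions Morleyization nor the partial-elementary-to-elementary extension step, so as written it does not produce the $\sigma,\rho$ the Embedding Lemma requires.

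Two smaller points. Your fallback remark that ``$\l$-closure of a subring does not enlarge the residue field or value group'' is not needed and also not really the right observation: an $\LL$-substructure $\AA$ is automatically $\l$-closed on the home sort since the $\underline\l_{n,m}$ are $\LL$-function symbols, so the lemma's hypothesis is satisfied by taking $A$ to be the home sort of $\AA$ (the paper notes $\langle A\rangle\subseteq\AA$). Finally, the result you describe as ``Corollary~\ref{formbyform}'' (existential formulas reducing to existential formulas) is actually Corollary~\ref{delred}; Corollary~\ref{formbyform} is the plain unwinding of the quantifier elimination. For the existential refinement the paper does run a one-sided argument as you suggest, but again it is mediated not by Morleyization but by Fact~\ref{exemb} (existential embeddings into saturated elementary extensions) together with the syntactic reduction criterion of Corollary~\ref{delredeq}; a bare ``compactness/type-counting'' gloss underplays that this requires a different tool than in the universal case.
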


When studying valued fields from a model-theoretic point of view, we will always work with three sorted languages, with sorts for the home field $\KK$, the residue field $\kk$ and the value group $\GG$. In this framework, \emph{resplendent elimination of field quantifiers} means that the corresponding theory has quantifier elimination with respect to the language obtained as the morleyzation of \emph{any} expansion of the language of the sort $\kk$ and the morleyzation of \emph{any} expansion of the language of the sort $\GG.$ See Remark \ref{respleq} for further details.


This document is organized as follows. In Chapter 2 we gather some preliminary results on top of which our main theorems are built. First, we give some background around separable field extensions, and explain how this concept is captured by the behavior of the associated $\lambda$-functions and the associated $p$-independence notion. Second, we include some general valuation theoretic facts about Separable Algebraically Maximal Kaplansky fields and Artin-Schreier extensions thereof. Third, we cite some fundamental results concerning pseudo-convergent sequences and immediate extensions, commonly known as Kaplansky Theory, and draw some conclusions in the separable-algebraically maximal case. Fourth, we recall some model-theoretic notations and facts in order to study existential formulas, and define the languages and the theories of $\texttt{SAMK}$ valued fields that we will consider from that point on. Finally, we recall some definitions and facts related to computability of first-order theories.   
In Chapter 3 we state and prove the Embedding Lemma \ref{emblem}, which is the key ingredient of the proof of our main theorem. In Chapter 4, we explain how, in our context, (resplendent, relative) quantifier elimination follows from an embedding extension problem, and use the Embedding Lemma to solve it. We conclude in Corollary \ref{nipn} that, in our language, NIP and NIP$_n$ equi-characteristic henselian $\ac$-valued fields admit resplendently eliminate field quantifier. Moreover, we see how the same technique yields that existential formulas reduce to existential formulas without quantifiers from the home sort, following a model-theoretic criterion.
Finally, in Chapter 5, we exploit the two reductions of Chapter 4 in order to get Ax-Kochen-Ershov-like results for models of $\texttt{SAMK}_{e}^{\lambda,\ac}.$ In particular, we use relative quantifier elimination to deduce the AKE-principles for $\equiv$ and $\preceq,$ and we use the reduction of existential formulas to deduce the AKE-principles for $\equiv_\E$ and $\preceq_\E.$ From this, we give a description of all completions of $\texttt{SAMK}_{e}^{\lambda,\ac},$ and also conclude that the residue field and the value group sorts are stably-embedded and orthogonal. Additionally, we study the decidability properties of this theory, where the focus is put into studying some many-one and Turing reductions down to the theories of the residue field and the value group, alongside their existential counterparts.

\section{Preliminaries} 

\begin{defin}
A valued field is called \emph{separable-algebraically maximal} if it does not admit any separable-algebraic immediate extension.
\end{defin}

By \cite[Theorem 2.4]{fvkp}, a valued field $(K,v)$ is separable-algebraically maximal if and only if it is henselian and $$\E x\in K\,\A y\in K\,\Big(vf(x)\geq vf(y)\Big)$$
for every separable polynomial $f\in K[X],$ which is axiomatizable in any of the standard languages of valued fields. The highlighted property is commonly known as \emph{extremality}, so a valued field is separable-algebraically maximal if and only if it is henselian and extremal for separable polynomials of one variable.

\begin{defin}
A valued field $(K,v)$ is called \emph{Kaplansky} if it is either of residue zero or of residue characteristic $p>0$ and
\begin{itemize}[wide]
    \item $vK$ is $p$-divisible,

    \item $Kv$ is perfect, and

    \item $Kv$ does not admit any proper separable algebraic extension of degree divisible by $p.$
\end{itemize}
\end{defin}
For example, $Kv=\bigcup_{n\geq1}\F_{p^{p^{n}}}$ satisfies these properties. By \cite[Theorem 5]{fvk}, a valued field $(K,v)$ of residue characteristic $p>0$ is Kaplansky if and only if $vK$ is $p$-divisible and $Kv$ is \emph{$p$-closed}, i.e. for any additive polynomial $f\in Kv[X]$ and any $\a\in Kv,$ the equation $f(x)=\a$ has a solution in $Kv.$ It is well known that if a field $K$ is infinite, then the set of additive polynomials over $K$ is exactly the class of polynomials of the form $f(X)=\sum_{i=0}^{n}a_iX^{p^{i}}$ with $a_0,\.,a_n\in K,$ implying that the property of $p$-closedness is first order axiomatizable in the language of rings $\LL_{ring}$, and the class of Kaplansky fields (of a given residue characteristic) is elementary. 
Altogether, the class $\texttt{SAMK}_{p}$ of separable-algebraically maximal Kaplansky fields of \emph{equi-characteristic} $p$ is elementary in the usual three-sorted language $\LL_{3s}$ of valued fields.

Unless stated otherwise, all fields mentioned in the rest of this chapter have characteristic $p>0$.

\subsection{Separability}

\begin{defin}
    Let $M$ be a field. We define the \emph{$p$-closure operation in $M$} as the closure operation on subsets $A$ of $M$ given by $$A\mapsto M^{p}(A).$$
\end{defin}

By \cite[Remark C.1.1]{tz}, this operation defines a pregeometry on $M,$ and likewise its relativizations\footnote{If $\cl:\mathcal{P}(M)\to\mathcal{P}(M)$ is a closure operation, then its relativization to a subset $S\subseteq M$ is the closure operation defined by $A\mapsto\cl(A\cup S).$} to arbitrary subsets of $M.$ If $M|F$ is a field extension, we say that a tuple $a$ in $M$ is \emph{$p$-independent in $M$ over $F$} if $$x\not\in M^{p}F(a\setminus\{x\})$$ for every $x\in a.$ We say that $a$ \emph{$p$-spans $M$ over $F$} if $M^{p}F(a)=M,$ and $a$ is a \emph{$p$-basis of $M$ over $F$} if it is $p$-independent in $M$ over $F$ and it $p$-spans $M$ over $F.$ If $F=\F_p,$ we say that $a$ is \emph{$p$-independent in $M$}, that it \emph{$p$-spans $M$} and that it is a \emph{$p$-basis of $M$} respectively. Finally, we define the \emph{imperfection degree of $M$ over $F$} to be the maximal (cardinal) length of a tuple in $M$ which is $p$-independent in $M$ over $F.$  
Note that a tuple $a$ of $M$ is $p$-independent in $M$ over $F$ (resp. $p$-spans $M$ over $F,$ is a $p$-basis of $M$ over $F$) if it is independent (resp. a generating set, a basis) with respect to the relativization to $F$ of the $p$-closure operation on $M.$ Also, the imperfection degree of $M$ over $F$ coincides with the dimension $\imdeg(M|F)$ of the $p$-closure operation on $M$ relativized to $F$. In this document, we define the \emph{Ershov degree} of $M$ over $F$ as $$\begin{cases}\imdeg(M|F)&\text{ if }\imdeg(M|F)\text{ is finite},\\\infty&\text{ otherwise.}\end{cases}$$ \emph{This is a purely formal definition that will only be used for axiomatizability purposes}, for we will be mainly interested in the imperfection degree rather than in Ershov degrees. If $\k\leq|M|$ is a cardinal, let $\Mon(\k)$ be the set of tuples $m=(m_\b)_{\b<\k}\in\{0,\.,p-1\}^{\k}$ whose support $\{\b<\k:m_\b\neq0\}$ is finite. For each cardinal $\k\leq|M|,$ each tuple $a=(a_\b)_{\b<\k}$ in $M$ of length $\k$ which is $p$-independent in $M$ over $F,$ and each $m\in\Mon(\k),$ define $$a^{m}:=\prod_{\b<\k}a_\b^{m_\b}=\prod_{\substack{\b<\k\\m_\b\neq 0}}a_\b^{m_\b}.$$  
Given one such tuple $a,$ the set $\{a^{m}:m\in\Mon(\k)\}$ is an $M^{p}F$-linear basis of $M^{p}F(a),$ cf. \cite[Lemma 2.3]{at}. This implies that for each $t\in M^{p}F(a)$ there are uniquely determined elements $\{\mu_{m}^{a}(t):m\in\Mon(\k)\}$ of $M^{p}F$ such that $$t=\sum_{m\in\Mon(\k)}\mu_{m}^{a}(t)\cdot a^{m},$$ thus defining the \emph{coordinate functions} $\mu_{m}^{a}:M^{p}F(a)\to M^{p}F.$ Unless stated otherwise, we will assume that $F$ is the prime field $\F_p$, so for each $m\in\Mon(\k)$ the coordinate function $\mu_{m}^{a}$ has domain $M^{p}(a)$ and co-domain $M^{p}.$ We now define the \emph{parameterized $\l$-functions} as the functions $\{\l_{m}^{a}:m\in\Mon(\k)\},$ with $\l_{m}^{a}:M^{p}(a)\to M,$ satisfying that $(\l_{m}^{a}(t))^{p}=\mu_{m}^{a}(t)$ for each $t\in M^{p}(a),$ i.e. satisfying $$t=\sum_{m\in\Mon(\k)}(\l_{m}^{a}(t))^{p}\cdot a^{m}.$$ 
\begin{remark}
\label{defl}
    Note that these conditions defining the parameterized $\l$-functions are elementary in the language of rings. 
\end{remark}

The following fact goes back at least as far as Mac Lane. We point to \cite{at} for more information. 

\begin{fact}[Cf. {\cite[Proposition 2.7]{at}}]
\label{sylvy}
Let $M|F$ be a field extension. The following statements are equivalent.
\begin{enumerate}[label*={\arabic*.}]
    \item $M|F$ is separable.

    \item For each tuple $b$ from $F$ which is $p$-independent in $M,$ we have that $M^{p}(b)\cap F=F^{p}(b).$

    \item For each tuple $b$ from $F$ which is $p$-independent in $M,$ each monomial $m\in\Mon(|b|)$ and each $a\in F\cap M^{p}(b),$ the element $\l_m^{b}(a)$ lies in $F.$
\end{enumerate}
\end{fact}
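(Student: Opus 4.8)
The plan is to reduce the whole equivalence to the classical linear--disjointness criterion for separability and then argue with the monomial bases $\{b^m : m\in\Mon(|b|)\}$ supplied by \cite[Lemma 2.3]{at}. Recall Mac Lane's criterion: inside a fixed algebraic closure $\ol M$, the extension $M|F$ is separable if and only if $M$ and $F^{1/p}$ are linearly disjoint over $F$. I would first Frobenius--twist this: the map $x\mapsto x^p$ is an automorphism of the perfect field $\ol M$ carrying $M\mapsto M^p$, $F\mapsto F^p$ and $F^{1/p}\mapsto F$, and it preserves linear disjointness, so $M|F$ is separable iff $M^p$ and $F$ are linearly disjoint over $F^p$. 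I also use the standard fact that this holds iff some --- equivalently every --- $F^p$-basis of $F$ stays linearly independent over $M^p$. Finally, note that if $b\subseteq F$ is $p$-independent in $M$ then it is a fortiori $p$-independent in $F$ (as $F^p(b')\subseteq M^p(b')$ for every subtuple $b'$), so by \cite[Lemma 2.3]{at} the set $\{b^m : m\in\Mon(|b|)\}$ is simultaneously an $M^p$-basis of $M^p(b)$ and an $F^p$-basis of $F^p(b)$.

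For $(1)\Rightarrow(2)$: assuming $M|F$ separable, hence $M^p$ and $F$ linearly disjoint over $F^p$, fix a tuple $b$ from $F$ that is $p$-independent in $M$ and complete the $F^p$-basis $\{b^m\}$ of $F^p(b)$ to an $F^p$-basis $\mathcal B$ of $F$; by linear disjointness $\mathcal B$ remains $M^p$-linearly independent. Given $a\in M^p(b)\cap F$, expand $a$ once as an $M^p$-combination of the $b^m$ and once as an $F^p$-combination of $\mathcal B$; subtracting and invoking $M^p$-linear independence of $\mathcal B$ kills the coefficients on $\mathcal B\setminus\{b^m\}$ and shows the coefficients on the $b^m$ already lie in $F^p$, so $a\in F^p(b)$. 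Together with the trivial reverse inclusion $F^p(b)\subseteq M^p(b)\cap F$ this gives $M^p(b)\cap F=F^p(b)$.

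For $(2)\Rightarrow(1)$: since $A\mapsto M^p(A)$ is a pregeometry on $M$, pick (Zorn, using finite character) a subset $b$ of $F$ that is maximal among subsets of $F$ which are $p$-independent in $M$; by the exchange property every element of $F$ then lies in $\cl(b)=M^p(b)$, so $M^p(b)\cap F=F$, and condition $(2)$ applied to this very $b$ yields $F^p(b)=F$. Thus $\{b^m\}$ is an $F^p$-basis of $F$ which, as $b$ is $p$-independent in $M$, is $M^p$-linearly independent; hence $M^p$ and $F$ are linearly disjoint over $F^p$ and $M|F$ is separable. Finally $(2)\Leftrightarrow(3)$ is pure bookkeeping with the definition of the parameterized $\l$-functions: for $a\in M^p(b)\cap F$ the coordinates $\mu_m^b(a)\in M^p$ with $a=\sum_m\mu_m^b(a)\,b^m$ are unique (the $b^m$ are $M^p$-linearly independent), and $a\in F^p(b)$ holds exactly when all $\mu_m^b(a)\in F^p$; since $\mu_m^b(a)=\l_m^b(a)^p$ and Frobenius is injective on $\ol F$, this happens exactly when all $\l_m^b(a)\in F$. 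So $M^p(b)\cap F=F^p(b)$ for every admissible $b$ is equivalent to $\l_m^b(a)\in F$ for every admissible $b$, $m$ and every $a\in F\cap M^p(b)$.

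The computations here are routine; the two points to get right are the direction of the Frobenius twist in the first paragraph, and the fact that conditions $(2)$ and $(3)$ quantify only over tuples that are \emph{already} $p$-independent in $M$ --- so one cannot simply substitute an arbitrary $p$-basis of $F$ (precisely the object that may fail to be $p$-independent in $M$ when $M|F$ is inseparable), and the maximal $p$-independent tuple furnished by the pregeometry is exactly the witness that bridges this gap in the proof of $(2)\Rightarrow(1)$.
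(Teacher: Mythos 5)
Your proof is correct. One point to flag up front: the paper does not supply its own proof of this statement --- it is labelled a \emph{Fact} and delegated to \cite[Proposition~2.7]{at} --- so there is no internal argument to compare against. What you give is a sound, self-contained derivation.

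The route you take is the natural one: Mac~Lane's linear-disjointness criterion, Frobenius-twisted so that separability of $M|F$ becomes linear disjointness of $M^p$ and $F$ over $F^p$, and then reading everything off the monomial bases $\{b^m\}$ supplied by \cite[Lemma~2.3]{at}. The details check out. In $(1)\Rightarrow(2)$ the completion of $\{b^m\}$ to an $F^p$-basis $\mathcal B$ of $F$, comparison of the two expansions of $a$, and the use of $M^p$-linear independence of $\mathcal B$ are all correct; the reverse inclusion $F^p(b)\subseteq M^p(b)\cap F$ is trivial as you say. In $(2)\Rightarrow(1)$ you correctly identify the one genuinely delicate step: conditions $(2)$ and $(3)$ quantify only over tuples from $F$ that are $p$-independent \emph{in $M$}, so one cannot reach for an arbitrary $p$-basis of $F$. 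Your fix --- take a maximal such tuple $b$, use exchange to get $F\subseteq M^p(b)$ and then $(2)$ to conclude $F=F^p(b)$, so $\{b^m\}$ is an $F^p$-basis of $F$ that stays $M^p$-linearly independent --- is exactly what is needed. The equivalence $(2)\Leftrightarrow(3)$ via uniqueness of the coordinates $\mu_m^b(a)$ and injectivity of Frobenius (which holds on any field, not just $\ol F$, so your parenthetical is if anything slightly over-cautious) is also correct. No gaps.
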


\begin{defin}
We say that $F$ is \emph{$\l$-closed in $M$} if statement 3 of Fact \ref{sylvy} holds. Note that this definition also makes sense if $F$ is just a subring of $M.$ If the ambient field $M$ is clear from the context, we say that $F$ is \emph{$\l$-closed.}  
Also, let $F$ and $N$ be two fields and let $\iota:F\to N$ be a ring embedding. We say that $\iota$ is \emph{separable} if $N|\iota F$ is separable.  
\end{defin}

Therefore, $F$ is $\l$-closed in $M$ if and only if $M|F$ is separable, by Fact \ref{sylvy}.

\begin{lema}
\label{cosepemb}
    Let $M|F$ be a separable field extension, let $N$ be another field and let $\iota:F\to N$ be a separable ring embedding. Then for any $b\in F^{n}$ which is $p$-independent in $M,$ any $a\in F\cap M^{p}(b)$ and any monomial $m\in\Mon(n),$ 
    $$\iota\left(\l_{m}^{b}(a)\right)=\l_{m}^{\iota b}(\iota a),$$ where the lambdas of the left hand side correspond to $M$ and the ones of the right hand side correspond to $N.$
\end{lema}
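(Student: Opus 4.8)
The plan is to reduce the statement to an equality of elements in the field $N$ that we can verify by exploiting the defining equation of the $\l$-functions and the fact that both $M|F$ and $N|\iota F$ are separable. First I would fix $b\in F^n$ that is $p$-independent in $M$, fix $a\in F\cap M^p(b)$, and set $c_m:=\l_m^b(a)$ for $m\in\Mon(n)$, so that by definition $a=\sum_{m\in\Mon(n)}c_m^p\,b^m$ inside $M$. Since $a\in F$ and each $b^m\in F$, and since $F$ is $\l$-closed in $M$ by Fact~\ref{sylvy} (because $M|F$ is separable), each coefficient $c_m=\l_m^b(a)$ in fact lies in $F$; this is exactly what lets us apply $\iota$ to the displayed equation. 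Applying the ring embedding $\iota$ yields $\iota a=\sum_{m\in\Mon(n)}(\iota c_m)^p\,(\iota b)^m$ inside $N$.

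Next I would check that $\iota b$ is $p$-independent in $N$, so that the $\l$-functions $\l_m^{\iota b}$ on the right-hand side are actually defined. This is where the separability of $\iota$ enters a second time: the tuple $b$, being $p$-independent in $M\supseteq F$, is in particular $p$-independent in $F$, hence $p$-independent in $N$ by separability of $N|\iota F$ (again via Fact~\ref{sylvy}, or directly from the pregeometry-theoretic characterization of separable extensions preserving $p$-independent tuples). Consequently $a':=\iota a$ lies in $\iota F\cap N^p(\iota b)$, so $\l_m^{\iota b}(\iota a)$ is well defined. By the uniqueness clause in the definition of the parameterized $\l$-functions — the set $\{(\iota b)^m:m\in\Mon(n)\}$ being an $N^p$-linear basis of $N^p(\iota b)$ — the coefficients $(\iota c_m)^p$ appearing in the expansion $\iota a=\sum_m(\iota c_m)^p(\iota b)^m$ must coincide with $\mu_m^{\iota b}(\iota a)=(\l_m^{\iota b}(\iota a))^p$. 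Hence $(\iota c_m)^p=(\l_m^{\iota b}(\iota a))^p$ in $N$, and since $N$ is a field of characteristic $p$ the Frobenius is injective, so $\iota c_m=\l_m^{\iota b}(\iota a)$, which is the desired identity $\iota(\l_m^b(a))=\l_m^{\iota b}(\iota a)$.

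The one genuine subtlety — the main obstacle, modest as it is — is the justification that $\iota b$ remains $p$-independent in $N$: we only know $b$ is $p$-independent in $M$, and $\iota$ is defined only on $F$, so we must pass through the intermediate fact that $p$-independence of $b$ in $M$ implies $p$-independence of $b$ in $F$ (immediate from the definition, since $M^p F(b\setminus\{x\})\supseteq F^p F(b\setminus\{x\})$ shows non-membership in the larger set forces non-membership in the smaller, wait — more carefully: $b$ $p$-independent in $M$ means $x\notin M^p(b\setminus\{x\})$ for each $x\in b$, and since $F^p(b\setminus\{x\})\subseteq M^p(b\setminus\{x\})$, indeed $x\notin F^p(b\setminus\{x\})$, so $b$ is $p$-independent in $F$), and then use separability of $\iota$ to transport this across $\iota$. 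Everything else is a direct unwinding of definitions together with injectivity of Frobenius on the field $N$; no saturation or valuation-theoretic input is needed for this lemma.
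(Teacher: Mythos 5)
Your proof is correct and follows essentially the same route as the paper: express $a$ via the defining equation of the $\l$-functions, use $\l$-closedness of $F$ (Fact~\ref{sylvy}) to see the coefficients lie in $F$, apply $\iota$, check $\iota b$ stays $p$-independent in $N$ via separability of $\iota$, and conclude by uniqueness of the $N^p$-coordinates. The only difference is that you make explicit the final appeal to injectivity of Frobenius on $N$ to pass from $(\iota c_m)^p = (\l_m^{\iota b}(\iota a))^p$ to $\iota c_m = \l_m^{\iota b}(\iota a)$, which the paper leaves implicit in the phrase ``$p$-independence of $\iota b$ in $N$ implies that\ldots''; this is a welcome bit of extra care rather than a divergence.
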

\begin{proof}
    If $b\in F^{n}$ is $p$-independent in $M,$ then it is trivially $p$-independent in $F.$ Therefore $\iota b$ is $p$-independent in $\iota F,$ and since $\iota$ is separable, $\iota b$ is $p$-independent in $N.$ Now let $a\in F\cap M^{p}(b),$ so that $$a=\sum_{m\in\Mon(n)}\left(\lambda_{m}^{b}(a)\right)^{p}\cdot b^{m}.$$
    Since $M|F$ is separable, we know by Fact \ref{sylvy} that $F$ is $\l$-closed in $M,$ i.e. $\lambda_{m}^{b}(a)\in F$ and $\iota\left(\lambda_{m}^{b}(a)\right)$ is well defined for all $m\in\Mon(n).$ Applying $\iota$ yields that 
    \begin{align*}
        \iota a&=\sum_{m\in\Mon(n)}\left(\iota\left(\lambda_{m}^{b}(a)\right)\right)^{p}\cdot (\iota b)^{m}\in N^{p}(\iota b)\\
        &=\sum_{m\in\Mon(n)}\left(\lambda_{m}^{\iota b}(\iota a)\right)^{p}\cdot (\iota b)^{m},
    \end{align*}
    where the second equality holds because $\iota b$ is a $p$-basis of $N^{p}(\iota b).$ Then, $p$-independence of $\iota b$ in $N$ implies that $$\iota\left(\l_{m}^{b}(a)\right)=\l_{m}^{\iota b}(\iota a)$$ for all $m\in\Mon(n),$ as wanted.
\end{proof}

\begin{lema}
\label{lcltofrac}
Let $M$ be a field and let $A\subseteq M$ be a $\l$-closed subring. Then $F=\Frac(A)$ is a $\l$-closed subfield of $M.$ 
\end{lema}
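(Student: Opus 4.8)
The plan is to reduce every assertion about tuples and elements of $F=\Frac(A)$ to the corresponding assertion about tuples and elements of $A$, by clearing denominators using $p$-th powers. Concretely, let $b=(b_1,\dots,b_n)$ be a tuple from $F$ which is $p$-independent in $M$, fix $m\in\Mon(n)$ and $a\in F\cap M^{p}(b)$; we must show $\l_m^{b}(a)\in F$. Write $b_i=c_i/d_i$ with $c_i,d_i\in A$, $d_i\neq0$, and $a=e/f$ with $e,f\in A$, $f\neq0$. The point is that $d_i^{p}\in M^{p}$, so replacing each $b_i$ by $b_i':=d_i^{p}b_i=d_i^{p-1}c_i\in A$ does not change the $p$-closure: one has $M^{p}(b')=M^{p}(b)$, because $M^{p}$ is a subfield of $M$ containing each of $d_i^{p}$ and $d_i^{-p}$. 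Since $\{b^{m}:m\in\Mon(n)\}$ always spans $M^{p}(b)$ over $M^{p}$ and is a basis precisely when the tuple is $p$-independent (cf. \cite[Lemma 2.3]{at}), a dimension count over $M^{p}$ shows that $b'=(b_1',\dots,b_n')$ is again $p$-independent in $M$, so the functions $\l_m^{b'}$ are defined and have the same domain $M^{p}(b)$.

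The heart of the argument is to track how the $\l$-functions transform under this scaling. From $(b')^{m}=\bigl(\prod_i d_i^{m_i}\bigr)^{p}\,b^{m}$ and the expansion $a=\sum_{m\in\Mon(n)}(\l_m^{b'}(a))^{p}(b')^{m}$, uniqueness of the coordinates in the $M^{p}$-basis $\{b^{m}\}$ gives $\l_m^{b}(a)=d^{m}\cdot\l_m^{b'}(a)$ with $d^{m}:=\prod_i d_i^{m_i}\in A$ (here one takes $p$-th roots, which are unique in characteristic $p$). Similarly, multiplying the expansion of $a$ by $f^{p}\in M^{p}$ and using uniqueness again gives $\l_m^{b'}(af^{p})=f\cdot\l_m^{b'}(a)$. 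Now $b'$ is a $p$-independent tuple from $A$ and $af^{p}=ef^{p-1}$ lies in $A\cap M^{p}(b')$, so $\l$-closedness of $A$ yields $\l_m^{b'}(af^{p})\in A$. Combining the three displayed relations,
$$\l_m^{b}(a)=\frac{d^{m}\cdot\l_m^{b'}(af^{p})}{f}\in\Frac(A)=F,$$
which is exactly what we want.

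The only genuinely non-routine step is the scaling observation: that multiplying the coordinates of a $p$-independent tuple by $p$-th powers of ring elements preserves both $p$-independence and the $p$-closure, and transforms the $\l$-functions by the corresponding (ring-valued) monomial. Once this is in place, everything else is bookkeeping with the uniqueness of the $M^{p}$-linear coordinates from \cite[Lemma 2.3]{at} together with the closure hypothesis on $A$; no direct appeal to Fact~\ref{sylvy} or to separability is needed, though a posteriori the conclusion is consistent with it, saying that $M\mid F$ is separable as soon as $A$ is $\l$-closed in $M$.
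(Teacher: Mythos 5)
Your proof is correct, and it follows the same overall strategy as the paper's: clear denominators by multiplying each $b_i$ by a $p$-th power from $A$, check that the resulting tuple $b'$ from $A$ is still $p$-independent with $M^{p}(b')=M^{p}(b)$, apply $\l$-closedness of $A$ to $b'$ and to $af^{p}\in A$, and transfer back. The difference lies in how the transfer is carried out. The paper uses the general change-of-basis identity $\l_m^{b}(a)=\sum_{m'}\l_{m'}^{b'}(a)\cdot\l_m^{b}\big((b')^{m'}\big)$ between the two $p$-bases $b$ and $b'$ of $M^{p}(b)$, and then controls the coefficients $\l_m^{b}\big((b')^{m'}\big)$ by citing \cite[Lemma 2.15]{at} together with an explicit evaluation of the $\l$-coordinates of each $d_i^{p}b_i$. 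You instead observe that the base change is diagonal, $(b')^{m}=(d^{m})^{p}\,b^{m}$, which yields the one-line identities $\l_m^{b}(a)=d^{m}\,\l_m^{b'}(a)$ and $\l_m^{b'}(af^{p})=f\,\l_m^{b'}(a)$ from uniqueness of the $M^{p}$-coordinates and injectivity of Frobenius; this makes the argument self-contained and is a genuine, if modest, simplification. Your dimension count for the $p$-independence of $b'$ is also fine and replaces the paper's contrapositive argument via a maximal $p$-independent subtuple. The only point you elide is the reduction from arbitrary (possibly infinite) $p$-independent tuples $b$ from $F$, as the definition of $\l$-closedness requires, to finite ones; this is immediate because $m$ has finite support and $a$ lies in the subfield generated by $M^{p}$ and finitely many entries of $b$, but it deserves a sentence.
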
 
\begin{proof}
    We want to prove that $\l_m^{b}(a)\in F$ for any $\k$-tuple $b$ from $F$ which is $p$-independent in $M,$ any $a\in F\cap M^{p}(b)$ and any monomial $m\in\Mon(\k).$  
    After renaming the indices of $m$, we may assume that $\k=n$ is a finite cardinal, given that $m\in\Mon(\k)$ has finite support. 
    Let $b=(b_1,\.,b_n)=(a_1/c_1,\.,a_n/c_n)$ and let $a=x/y,$ where $a_1,\.,a_n,$ $c_1,\.,c_n,$ $x,y\in A$ are all non-zero. 
    First, we claim that $d:=(c_1^{p-1}a_1,\.,c_n^{p-1}a_n)$ is a tuple in $A$ which is $p$-independent in $M.$ To see this, we may assume that $(c_1^{p-1}a_1,\.,c_l^{p-1}a_l),$ with $l\leq n,$ is a maximally $p$-independent sub-tuple of $d$ in $M,$ for which we will argue that $l=n.$ If this is not the case, we would have that
    $$c_n^{p}b_n=c_n^{p-1}a_n=\sum_{m\in\Mon(l)}\left(\l_m\cdot c_1^{m_1}\cdot\.\cdot c_l^{m_l}\right)^{p}\cdot b_1^{m_1}\cdot\.\cdot b_l^{m_l}$$
    for some elements $\l_m\in M,$ implying that 
    $$b_n=\sum_{m\in\Mon(l)}\left(\dfrac{\l_m\cdot c_1^{m_1}\cdot\.\cdot c_l^{m_l}}{c_n}\right)^{p}\cdot b_1^{m_1}\cdot\.\cdot b_l^{m_l}$$ and that $b$ is not $p$-independent in $M.$ 
    Second, we claim that $M^{p}(b)=M^{p}(d).$ Indeed, if $i\in\{1,\.,n\},$ then $$b_i=a_i/c_i=(1/c_i)^{p}\cdot c_i^{p-1}a_i\in M^{p}(d)\text{ and }c_i^{p-1}a_i=c_i^{p}b_i\in M^{p}(b).$$ 
    This implies that $b$ and $d$ are two $p$-bases of $M^{p}(b)=M^{p}(d).$ Then, for each monomial $m'\in\Mon(n),$ we have that $$d^{m'}=\sum_{m\in\Mon(n)}\left(\l_m^{b}(d^{m'})\right)^{p}\cdot b^{m}.$$ If $a\in F\cap M^{p}(b)=F\cap M^{p}(d),$ then 
    $$a=\sum_{m'\in\Mon(n)}\left(\l_{m'}^{d}(a)\right)^{p}\cdot d^{m'}=\sum_{m\in\Mon(n)}\left(\sum_{m'\in\Mon(n)}\l_{m'}^{d}(a)\cdot\l_m^{b}(d^{m'})\right)^{p}\cdot b^{m},$$
    hence $$\l_m^{b}(a)=\sum_{m'\in\Mon(n)}\l_{m'}^{d}(a)\cdot\l_m^{b}(d^{m'})$$ for all monomials $m\in\Mon(n).$ Therefore, the result of the Lemma follows once we prove that $\l_{m}^{d}(a)$ and $\l_m^{b}(d^{m'}),$ with $m,m'\in\Mon(n),$ are all elements of $F.$ First, by statement (3) of \cite[Lemma 2.15]{at}, $$\l_m^{b}(d^{m'})\in\F_p\left[b,\l_{\nu}^{b}(c_1^{p-1}a_1),\.,\l_{\nu}^{b}(c_n^{p-1}a_n):\nu\in\Mon(n)\right].$$ Then $\l_m^{b}(d^{m'})\in F$ if $\l_{\nu}^{b}(c_i^{p-1}a_i)\in F$ for all $\nu\in\Mon(n)$ and all $i\in\{1,\.,n\}.$ For such an index $i,$ let $\nu(i)=(\nu_1(i),\.,\nu_n(i))\in\Mon(n)$ be defined by $\nu_j(i)=\d_{ij},$ where $\d_{ij}$ is Kronecker's delta. We get that $$\l_\nu^{b}\left(c_i^{p-1}a_i\right)=\l_\nu^{b}\left(c_i^{p}b_i\right)=\begin{cases}
        c_i\text{ if }\nu=\nu(i),\\
        0\text{ otherwise},
    \end{cases}$$
    which is always an element of $F,$ as wanted. Finally, if $m\in\Mon(n)$ is any monomial and $a=x/y\in M^{p}(d),$ then $y^{p-1}x=y^{p}a\in M^{p}(d)$ too, so 
    $$\sum_{m\in\Mon(n)}\left(\l_m^{d}\left(\dfrac{x}{y}\right)\right)^{p}\cdot d^{m}=\dfrac{x}{y}=\dfrac{1}{y^{p}}\,y^{p-1}x=\sum_{m\in\Mon(n)}\left(\dfrac{\l_m^{d}\left(y^{p-1}x\right)}{y}\right)^{p}\cdot d^{m},$$
    implying that $$\l_m^{d}\left(a\right)=\dfrac{\l_m^{d}\left(y^{p-1}x\right)}{y}\in F,$$ as wanted.
\end{proof}

The result of the following fact is well known even for arbitrary pregeometries, cf. \cite[Remark C.15]{tz}.
\begin{fact}
\label{comppind}
Let $M|F$ be a field extension. Let $c$ be a tuple in $F$ which is $p$-independent in $F$ and let $a$ be a tuple in $M\setminus F$ which is $p$-independent in $M$ over $F.$ Then the concatenation $ac$ is $p$-independent in $M.$ Moreover, if $c$ is a $p$-basis of $F$ and $a$ is a $p$-basis of $M$ over $F,$ then $ac$ is a $p$-basis of $M.$
\end{fact}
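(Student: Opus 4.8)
The plan is to deduce the first assertion from the general additivity of independence in a pregeometry, \cite[Remark C.15]{tz}, applied to the $p$-closure pregeometry $X\mapsto M^{p}(X)$ on $M$, and to obtain the ``moreover'' by a direct computation of composita.

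First I would recast the hypotheses inside the pregeometry $(M,\,X\mapsto M^{p}(X))$. The tuple $a$ is $p$-independent in $M$ over $F$, and since $\F_p(c)\subseteq F$ we have $M^{p}\bigl(c\cup(a\setminus\{x\})\bigr)\subseteq M^{p}F\bigl(a\setminus\{x\}\bigr)$ for every $x\in a$; hence $a$ is also $p$-independent over $c$, i.e.\ with respect to the localization of the pregeometry at $c$. For $c$ itself one has to upgrade $p$-independence in $F$ to $p$-independence in $M$, and this is the point where separability of $M|F$ enters, through Fact \ref{sylvy}. Concretely, arguing by induction on the length $n$ of $c=(c_1,\dots,c_n)$ (the base case being trivial): each subtuple $c\setminus\{c_i\}$ is $p$-independent in $F$ and hence, by the inductive hypothesis, $p$-independent in $M$; so Fact \ref{sylvy}(2) gives $M^{p}(c\setminus\{c_i\})\cap F=F^{p}(c\setminus\{c_i\})$, and were $c_i$ to lie in $M^{p}(c\setminus\{c_i\})$ it would then lie in $F^{p}(c\setminus\{c_i\})$, contradicting the $p$-independence of $c$ in $F$. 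Thus $c$ is $p$-independent in $M$.

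Applying \cite[Remark C.15]{tz} to the $p$-closure pregeometry on $M$ --- with $c$ independent and $a$ independent over $c$ --- now yields that the concatenation $ac$ is $p$-independent in $M$, which is the first assertion. For the ``moreover'' assume in addition that $F^{p}(c)=F$ and $M^{p}F(a)=M$. Then $F=F^{p}(c)\subseteq M^{p}(c)\subseteq M^{p}(ac)$, so $M^{p}(ac)$ contains $M^{p}$, $a$ and $F$, whence $M^{p}(ac)\supseteq M^{p}F(a)=M$; thus $ac$ $p$-spans $M$, and being also $p$-independent in $M$ it is a $p$-basis of $M$.

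The only genuinely delicate step is the transfer of $p$-independence of $c$ from $F$ to $M$: it fails for inseparable extensions (e.g.\ with $F=\F_p(t)$, $M=F(t^{1/p})$, $c=(t)$ and $a=(t^{1/p})$), so separability of $M|F$ --- equivalently, $F$ being $\l$-closed in $M$ --- is what does the real work here, via Fact \ref{sylvy}; the remainder is just the black-box pregeometry fact together with elementary manipulations of composita.
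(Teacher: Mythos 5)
Your argument is correct and follows the route the paper itself intends: the paper gives no proof of Fact \ref{comppind} beyond the remark that it is an instance of additivity of independence in an arbitrary pregeometry (\cite[Remark C.15]{tz}), and your reduction of the first assertion to that fact, together with the compositum computation $M^{p}(ac)\supseteq M^{p}F(a)=M$ for the ``moreover'' part, is exactly that reduction carried out. The genuinely valuable part of your write-up is the step you flag as delicate: the pregeometry fact only applies once $c$ is known to be $p$-independent \emph{in $M$}, whereas the hypothesis only gives $p$-independence \emph{in $F$}, and these are not the same thing in general. Your induction via statement 2 of Fact \ref{sylvy} bridges this, but it uses separability of $M|F$, which is \emph{not} among the stated hypotheses of Fact \ref{comppind}; and your example $F=\F_p(t)$, $M=F(t^{1/p})$, $c=(t)$, $a=(t^{1/p})$ shows the statement is genuinely false for an arbitrary field extension (there $c$ is a $p$-basis of $F$ and $a$ is a $p$-basis of $M$ over $F$, yet $t=(t^{1/p})^{p}\in M^{p}\subseteq M^{p}(t^{1/p})$, so $ac$ is not even $p$-independent in $M$). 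So what you have found is a missing hypothesis in the statement of the Fact, not a gap in your own proof; Lemma \ref{sepcorr}, whose proof invokes the ``moreover'' clause, inherits the same implicit assumption. This is harmless downstream, since every application in the paper (in particular in Step 11 of the Embedding Lemma, where $M|F_6$ and $M|F_7$ are separable) takes place over a $\lambda$-closed, i.e.\ separable, base, but the hypothesis ``$M|F$ separable'' should be added to the statement, and your proof is the correct proof of the corrected statement.
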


\begin{lema}
\label{sepcorr}
    Let $M|F$ be a field extension, let $\iota:M\to N$ be a ring embedding and let $a$ be a tuple of $M\setminus F$ which is a $p$-basis of $M$ over $F.$ If the restriction of $\iota$ to $F$ is separable and $\iota a$ is $p$-independent in $N$ over $\iota F,$ then $\iota$ is separable.
\end{lema}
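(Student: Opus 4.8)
The plan is to characterise separability via $p$-bases and then combine two applications of Fact~\ref{comppind}. I will use the following well-known reformulation of the linear-disjointness criterion for separability (it can also be extracted from Fact~\ref{sylvy}): for a field extension $L\supseteq K$ of characteristic $p$, the extension $L|K$ is separable if and only if some (equivalently, every) $p$-basis of $K$ is $p$-independent in $L$. Granting this, the proof is essentially bookkeeping with $p$-bases; the only point of care is that the $p$-bases involved may be infinite tuples, which is harmless since the whole $\Mon(\kappa)$-formalism used here is set up for arbitrary cardinals.

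First I would fix a $p$-basis $c$ of $F$. Since $\iota$ is a ring embedding, $\iota c$ is a $p$-basis of $\iota F$, and $\iota a$ is a $p$-basis of $\iota M$ over $\iota F$ with $\iota a\subseteq \iota M\setminus\iota F$ (because $a\subseteq M\setminus F$ and $\iota$ is injective). By hypothesis $\iota|_F$ is separable, i.e.\ $N|\iota F$ is separable, so by the criterion above $\iota c$ is $p$-independent in $N$.

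Now I would assemble the conclusion. Applying Fact~\ref{comppind} to the extension $\iota M|\iota F$, with $\iota c$ a $p$-basis of $\iota F$ and $\iota a$ a $p$-basis of $\iota M$ over $\iota F$, the concatenation $(\iota a)(\iota c)$ is a $p$-basis of $\iota M$. Applying Fact~\ref{comppind} instead to the extension $N|\iota F$ — now using only that $\iota c$ is $p$-independent in $\iota F$ and that $\iota a\subseteq N\setminus\iota F$ is $p$-independent in $N$ over $\iota F$, which is precisely the remaining hypothesis — the same concatenation $(\iota a)(\iota c)$ is $p$-independent in $N$. Hence $(\iota a)(\iota c)$ is a $p$-basis of $\iota M$ that is $p$-independent in $N$, and the criterion above gives that $N|\iota M$ is separable, i.e.\ $\iota$ is separable.

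The one genuinely non-formal ingredient is the $p$-basis criterion for separability. If one prefers to stay inside the paper's toolbox, it follows from Fact~\ref{sylvy} by a change-of-$p$-basis argument in the spirit of the proof of Lemma~\ref{lcltofrac} (using \cite[Lemma 2.15]{at}): $\lambda$-closedness of the ground field with respect to a single one of its $p$-bases propagates to all $p$-independent tuples from it. Everything else is the short combinatorics of $p$-independence afforded by Fact~\ref{comppind}, so I expect no real obstacle beyond setting up that criterion correctly.
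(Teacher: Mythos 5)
Your proof is correct and follows essentially the same route as the paper: fix a $p$-basis $c$ of $F$, use Fact~\ref{comppind} once to see that $(\iota a)(\iota c)$ is a $p$-basis of $\iota M$, observe that separability of $N|\iota F$ makes $\iota c$ $p$-independent in $N$, and use Fact~\ref{comppind} again to see that $(\iota a)(\iota c)$ is $p$-independent in $N$, then invoke the standard criterion (the paper cites \cite[Lemma 2.2]{at} for it) that a $p$-basis of the subfield being $p$-independent in the overfield gives separability. The only cosmetic difference is that you work directly inside $\iota M$ while the paper first assembles $ac$ in $M$ and then transports it by $\iota$; the substance is identical.
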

\begin{proof}
Let $c$ be a tuple in $F$ which is $p$-basis of $F.$ By Lemma \ref{comppind}, the concatenation $ac$ is a $p$-basis of $M.$ Since $\iota$ is an embedding, $\iota(ac)$ is a $p$-basis of $\iota M.$ Note that $\iota(ac)$ equals the concatenation $\iota(a)\iota(c).$ 
Now, we will conclude that the extension $N|\iota M$ is separable once we prove that $\iota(a)\iota(c)$ is $p$-independent in $N,$ by \cite[Lemma 2.2]{at}. 
To see this, note that $\iota(c)$ is a $p$-basis of $\iota F,$ and since $N|\iota F$ is separable, $\iota(c)$ is $p$-independent in $N.$ Since $\iota(a)$ is $p$-independent in $N$ over $\iota F,$ we get again by Lemma \ref{comppind} that $\iota(a)\iota(c)$ is $p$-independent in
$N,$ as wanted.
\end{proof}

The following lemma is well known, cf. \cite[Lemma 9, Example 2.13]{a}.

\begin{lema}
\label{sepcore}
    Let $M|F$ be a separable extension, let $a\in M$ and suppose that $F(a^{1/p^{\infty}})\subseteq M.$ Then $M|F(a^{1/p^{\infty}})$ is separable.
\end{lema}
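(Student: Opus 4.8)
Set $F' = F(a^{1/p^\infty}) \subseteq M$. The plan is to use the characterization from Fact~\ref{sylvy}: since we want $M \mid F'$ separable, it suffices to show that $F'$ is $\l$-closed in $M$, i.e. for every tuple $b$ from $F'$ that is $p$-independent in $M$, every monomial $\mu$, and every $\alpha \in F' \cap M^p(b)$, we have $\l_\mu^b(\alpha) \in F'$. First I would reduce to a tuple $b$ lying in $F$ itself: any finite tuple $b$ from $F'$ involves only finitely many of the roots $a^{1/p^k}$, and those roots together with $F$ generate a purely inseparable extension; a standard argument (or invoking that $p$-bases behave well under purely inseparable extensions) lets one replace $b$ by an equivalent $p$-independent tuple contained in $F$, at the cost of enlarging things by a bounded power of $p$. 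Concretely, I would argue that $(F')^p F = F'$-ish statements hold, so that the $p$-closure of a set inside $M$ is controlled by its $p$-closure over $F$.

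The cleaner route, which I would actually write, is this. Since $M \mid F$ is separable, $F$ is $\l$-closed in $M$. Take $b$ from $F'$, $p$-independent in $M$, $\alpha \in F' \cap M^p(b)$. Because $F' \mid F$ is purely inseparable, there is a single $k$ with $b^{p^k}, \alpha^{p^k} \in F$ (clearing denominators and raising to a large enough power kills all the $p^{1/p^\infty}$-roots). Now $b^{p^k}$ is still $p$-independent in $M$ (the $p$-closure pregeometry is unchanged under the Frobenius twist on coordinates, up to the usual bookkeeping), and $\alpha^{p^k} \in F \cap M^p(b^{p^k})$. By $\l$-closedness of $F$ in $M$, the coordinates $\l_\nu^{b^{p^k}}(\alpha^{p^k})$ all lie in $F \subseteq F'$. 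Finally I would translate these coordinates back: the relation
\[
\alpha^{p^k} = \sum_\nu \bigl(\l_\nu^{b^{p^k}}(\alpha^{p^k})\bigr)^p \cdot (b^{p^k})^\nu
\]
can be rewritten, after taking $p^k$-th roots inside the already-perfect-enough field $F'$ (all the relevant roots of elements of $F$ that appear are in $F'$ by construction, since $b, \alpha \in F'$), as an expression exhibiting $\l_\mu^b(\alpha) \in F'$. This last manipulation — matching the $\l$-coordinates over $b$ with those over $b^{p^k}$ after extracting $p^k$-th roots, and checking the roots one needs genuinely lie in $F' = F(a^{1/p^\infty})$ and not just in some larger perfect closure — is the main obstacle, and is where the hypothesis $F(a^{1/p^\infty}) \subseteq M$ (rather than just $a \in M$) gets used: it guarantees the needed roots are available inside $M$ and inside $F'$.

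Alternatively, if the bookkeeping above proves unwieldy, I would fall back on a transitivity-of-separability argument: one has the tower $F \subseteq F' \subseteq M$ with $M \mid F$ separable, and $F' \mid F$ purely inseparable; separability of $M \mid F'$ then follows from a general lemma on separable extensions inside a separable extension (a linear-disjointness argument: $M$ is linearly disjoint from $F^{1/p^\infty}$ over $F$, hence $M$ is linearly disjoint from $(F')^{1/p^\infty} = F^{1/p^\infty}$ over $F'$, which is exactly separability of $M \mid F'$). This is the content cited as \cite[Lemma 9, Example 2.13]{a}, so in the write-up I would most likely just quote that reference and use the $\l$-function reformulation only if a self-contained proof in the present language is wanted. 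I expect the whole argument to be short; the one genuine subtlety is verifying the $p$-independence of $b$ is preserved under the Frobenius twist and that no roots outside $F'$ sneak in.
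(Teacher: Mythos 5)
The central claim in your proposal --- that $F' := F(a^{1/p^{\infty}})$ is purely inseparable over $F$ --- is false, and the error propagates to both routes you sketch. The hypothesis is only that $a \in M$ and $F(a^{1/p^{\infty}}) \subseteq M$; there is no assumption that $a$ is algebraic over $F$, let alone that $a \in F^{1/p^{\infty}}$. Worse, if $F'|F$ were purely inseparable then, since $M|F$ is separable, $M$ is linearly disjoint from $F^{1/p^{\infty}}$ over $F$, hence $M \cap F^{1/p^{\infty}} = F$, and $F' \subseteq M \cap F^{1/p^{\infty}}$ would force $F' = F$, making the lemma vacuous. The interesting case --- and the one in which the lemma is actually invoked later, in Step 5 of the proof of Theorem \ref{emblem} --- is when $a$ is transcendental over $F$. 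Your first plan then fails at the step ``there is a single $k$ with $b^{p^k}, \alpha^{p^k} \in F$'': if $a$ is transcendental over $F$, no $p$-power of $a$ lies in $F$, so for $\alpha = a$ (or for most $\alpha \in F'$) no such $k$ exists. Your fallback fails at ``$(F')^{1/p^{\infty}} = F^{1/p^{\infty}}$'': again this equality holds only when $a \in F^{1/p^{\infty}}$, i.e.\ precisely in the vacuous case.

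The paper's proof needs no such reduction. Fix a $p$-basis $b$ of $F$. Separability of $M|F$ makes $b$ $p$-independent in $M$, hence in $F'$. By the criterion that a shared $p$-basis certifies separability (\cite[Lemma 2.2]{at}), it remains to show $b$ still $p$-spans $F'$, i.e.\ $F' = (F')^p(b)$. This reduces to the computation $(F')^p = F^p(a^{1/p^{\infty}})$. The inclusion $\supseteq$ is immediate: $F^p \subseteq (F')^p$ and $a^{1/p^n} = (a^{1/p^{n+1}})^p \in (F')^p$. For $\subseteq$, any $y \in F'$ is a rational function over $F$ in finitely many of the $a^{1/p^n}$, and $y^p$ is the same rational function with coefficients raised to the $p$-th power and each $a^{1/p^n}$ replaced by $a^{1/p^{n-1}}$, so $y^p \in F^p(a^{1/p^{\infty}})$. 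Then $(F')^p(b) = F^p(a^{1/p^{\infty}})(b) = F^p(b)(a^{1/p^{\infty}}) = F(a^{1/p^{\infty}}) = F'$. No assumption on $a$ beyond what is stated is used, and in particular no Frobenius twist on $b$ and no purely inseparable descent is needed. If you want to keep the $\lambda$-closure phrasing, the clean reduction is still this $p$-spanning computation, not the twist-and-extract-roots manipulation you outline, which you correctly flagged as the main obstacle and which in fact cannot be carried out.
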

\begin{proof}
    Let $b$ be a $p$-basis of $F.$ Since $M|F$ is separable and $F(a^{1/p^{\infty}})|F$ is a sub-extension thereof, we get that $b$ is $p$-independent in $M$ and in $F(a^{1/p^{\infty}}).$ By \cite[Lemma 2.2]{at}, $M|F(a^{1/p^{\infty}})$ is separable as long as $b$ is a $p$-basis of $F(a^{1/p^{\infty}}),$ so all we have to prove is that $b$ $p$-spans $F(a^{1/p^{\infty}}),$ i.e., that $$F(a^{1/p^{\infty}})=\left(F(a^{1/p^{\infty}})\right)^{p}(b).$$
    Such equality would follow once we prove that $\left(F(a^{1/p^{\infty}})\right)^{p}=F^{p}(a^{1/p^{\infty}}),$ because $F^{p}(b)=F.$ Now, note that $F^{p}\subseteq\left(F(a^{1/p^{\infty}})\right)^{p}$ and that $a^{1/p^{n}}=(a^{1/p^{n+1}})^{p}\in\left(F(a^{1/p^{\infty}})\right)^{p},$ so $F^{p}(a^{1/p^{\infty}})\subseteq\left(F(a^{1/p^{\infty}})\right)^{p}.$ Conversely, if $x\in\left(F(a^{1/p^{\infty}})\right)^{p},$ then there are some $n<\omega$ and some polynomials $f,g\in F[X_0,\.,X_n]$ such that 
        $$x=\left(\dfrac{f(a,a^{1/p},\.,a^{1/p^{n}})}{g(a,a^{1/p},\.,a^{1/p^{n}})}\right)^{p}=\dfrac{f^{*}(a^{p},a,\.,a^{1/p^{n-1}})}{g^{*}(a^{p},a,\.,a^{1/p^{n-1}})},$$
    where $f^{*}$ and $g^{*}$ are obtained by rising to the $p$-th power the coefficients of $f$ and $g$ respectively. It follows that $x\in F^{p}(a^{1/p^{\infty}}),$ as wanted.
\end{proof}

\begin{lema}
\label{ersgoesup}  
Let $M|F$ be separable algebraic. Then any $p$-basis of $F$ is a $p$-basis of $M$.
\end{lema}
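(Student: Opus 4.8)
The plan is to verify the two defining conditions of a $p$-basis of $M$ for a given $p$-basis $b$ of $F$: that $b$ is $p$-independent in $M$, and that $b$ $p$-spans $M$, i.e. $M^{p}(b)=M$. Since $p$-independence is a finitary condition and every finite sub-tuple of a $p$-basis of $F$ is $p$-independent in $F$, the first condition reduces to the transfer of $p$-independence along the separable extension $M|F$: a tuple from $F$ that is $p$-independent in $F$ stays $p$-independent in $M$ whenever $M|F$ is separable. This is the fact already used implicitly in the proof of Lemma \ref{sepcore}; it follows, for instance, from Mac Lane's criterion that $F$ and $M^{p}$ are linearly disjoint over $F^{p}$ (the monomials $b^{m}$ are $F^{p}$-linearly independent in $F$, hence $M^{p}$-linearly independent in $M$), or from \cite[Lemma 2.2]{at}. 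So $b$ is $p$-independent in $M$ for free.

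The substantive point is the spanning statement, and this is where algebraicity enters. I would first establish the identity $M=M^{p}F$. On one hand, $M|M^{p}F$ is purely inseparable, since every $x\in M$ satisfies $x^{p}\in M^{p}\subseteq M^{p}F$. On the other hand, $M|M^{p}F$ is separable: it sits at the top of the separable \emph{algebraic} extension $M|F$, with $F\subseteq M^{p}F\subseteq M$, and an intermediate extension at the top of a separable algebraic extension is again separable, because the minimal polynomial of any element over the larger base field divides a separable polynomial and is therefore separable. (This is precisely the step that genuinely requires algebraicity: "top of separable is separable" fails for transcendental extensions, e.g. $\F_p(t)\,|\,\F_p$ is separable but $\F_p(t)\,|\,\F_p(t^{p})$ is not.) Being simultaneously separable and purely inseparable, $M|M^{p}F$ is trivial, so $M=M^{p}F$. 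Now, since $b$ is a $p$-basis of $F$ we have $F=F^{p}(b)$, whence $M=M^{p}F=M^{p}(F^{p}(b))=M^{p}(b)$, using $F^{p}\subseteq M^{p}$. Thus $b$ $p$-spans $M$.

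Combining the two parts, $b$ is $p$-independent in $M$ and $p$-spans $M$, so it is a $p$-basis of $M$. The only delicate step is the identity $M=M^{p}F$, and within it the observation that separability is inherited by the top of a separable \emph{algebraic} extension; everything else is routine bookkeeping with the $p$-closure operation.
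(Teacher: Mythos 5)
Your argument is correct. The paper itself gives no proof of this lemma — it simply cites \cite[Lemma 2.7.3]{fj} as ``well known'' — so there is no in-text argument to compare against; the one you give is the standard textbook one. The key identity $M=M^{p}F$, obtained by noting that $M|M^{p}F$ is simultaneously purely inseparable (trivially) and separable (because an intermediate subextension at the top of a separable \emph{algebraic} extension is separable, as minimal polynomials over the larger base divide minimal polynomials over $F$), is exactly the right pivot, and you correctly flag that this is where algebraicity is indispensable, with the $\F_p(t)|\F_p(t^{p})$ example illustrating the failure in the transcendental case. The remaining bookkeeping — $b$ stays $p$-independent in $M$ by Mac Lane linear disjointness (or equivalently \cite[Lemma 2.2]{at}), and $M=M^{p}F=M^{p}(F^{p}(b))=M^{p}(b)$ — is routine and correct.
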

\begin{proof}
    Well known, cf. \cite[Lemma 2.7.3]{fj}.
\end{proof}

\begin{lema}
\label{pindtran}
    Let $M|F$ be a separable field extension, let $\ol{a}$ be a $p$-basis of $M$ over $F$ and let $\ol{c}\subset\ol{a}.$ Then
    \begin{enumerate}[wide, label*={\arabic*.}]
        \item $\ol{c}$ is a $p$-basis of $F(\ol{c})$ over $F,$ and
        
        \item If $a\in\ol{a}\setminus\ol{c},$ then $a$ is not algebraic over $F(\ol{c}).$
    \end{enumerate}
\end{lema}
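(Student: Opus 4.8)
The plan is to prove the two parts separately: the first is purely a matter of the $p$-closure pregeometry, and the second is where separability of $M|F$ does the work. For part~1, I would note that $\ol{c}$, being a sub-tuple of the $p$-basis $\ol{a}$ of $M$ over $F$, is $p$-independent in $M$ over $F$; since $\big(F(\ol{c})\big)^{p}F(\ol{c}\setminus\{x\})\subseteq M^{p}F(\ol{c}\setminus\{x\})\subseteq M^{p}F(\ol{a}\setminus\{x\})$ for every $x\in\ol{c}$ and $x$ avoids the last of these fields, it follows that $\ol{c}$ is $p$-independent in $F(\ol{c})$ over $F$. It $p$-spans $F(\ol{c})$ over $F$ trivially, as $\big(F(\ol{c})\big)^{p}F(\ol{c})$ is a subfield of $F(\ol{c})$ containing $F\cup\ol{c}$. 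Hence $\ol{c}$ is a $p$-basis of $F(\ol{c})$ over $F$. (The same $p$-independence of $\ol{a}$ over $F$ shows that no coordinate of $\ol{a}$ lies in $F$; in particular $\ol{c}$ is a tuple from $F(\ol{c})\setminus F$, which is needed below.)

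For part~2, fix $a\in\ol{a}\setminus\ol{c}$, put $\ol{d}:=\ol{a}\setminus\ol{c}$ so that $a\in\ol{d}$, and choose a $p$-basis $\ol{e}$ of $F$. Two applications of Fact~\ref{comppind} give, respectively, that $\ol{c}\,\ol{e}$ is a $p$-basis of $F(\ol{c})$ (taking $\ol{e}$ for the $p$-basis of $F$ and $\ol{c}$ for the $p$-basis of $F(\ol{c})$ over $F$, by part~1) and that $\ol{a}\,\ol{e}=\ol{c}\,\ol{d}\,\ol{e}$ is a $p$-basis of $M$. In particular $\ol{c}\,\ol{e}$, being a sub-tuple of the $p$-basis $\ol{c}\,\ol{d}\,\ol{e}$ of $M$, is $p$-independent in $M$, hence also $p$-independent in every subfield of $M$ containing it.

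Now assume, for contradiction, that $a$ is algebraic over $F(\ol{c})$. Then $F(\ol{c})(a)\,|\,F(\ol{c})$ is algebraic, and since $\ol{c}\,\ol{e}$ is a $p$-basis of $F(\ol{c})$ that is $p$-independent in $F(\ol{c})(a)\subseteq M$, the criterion \cite[Lemma 2.2]{at} forces $F(\ol{c})(a)\,|\,F(\ol{c})$ to be separable; so this extension is separable algebraic. By Lemma~\ref{ersgoesup}, the $p$-basis $\ol{c}\,\ol{e}$ of $F(\ol{c})$ is also a $p$-basis of $F(\ol{c})(a)$, whence
\[
a\in F(\ol{c})(a)=\big(F(\ol{c})(a)\big)^{p}(\ol{c}\,\ol{e})\subseteq M^{p}(\ol{c}\,\ol{e})\subseteq M^{p}F(\ol{c}),
\]
the last inclusion because $\ol{e}$ is a tuple from $F$. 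On the other hand $\ol{a}$ is $p$-independent in $M$ over $F$ and $\ol{c}\subseteq\ol{a}\setminus\{a\}$, so $a\notin M^{p}F(\ol{a}\setminus\{a\})\supseteq M^{p}F(\ol{c})$ — a contradiction. Therefore $a$ is transcendental over $F(\ol{c})$.

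I expect the only delicate point to be the reduction to $F(\ol{c})$ as base field: a ``lower'' sub-extension $M\,|\,L$ with $F\subseteq L\subseteq M$ of a separable extension need not be separable, so one genuinely has to use that $F(\ol{c})$ is generated over $F$ by part of a $p$-basis of $M$, which is exactly what the bookkeeping with Fact~\ref{comppind}, \cite[Lemma 2.2]{at} and Lemma~\ref{ersgoesup} encodes. Everything else is routine manipulation of $p$-closures.
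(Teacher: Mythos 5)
Your proof is correct and follows essentially the same route as the paper's: part 1 is the identical pregeometry argument, and part 2 reaches the same contradiction $a\in M^{p}F(\ol{c})$ by showing that $F(\ol{c})(a)|F(\ol{c})$ would be separable-algebraic and then applying the ``$p$-bases go up along separable algebraic extensions'' lemma. The only divergence is bookkeeping: you adjoin a $p$-basis $\ol{e}$ of $F$ so as to work with absolute $p$-bases and justify the separability of $F(\ol{c})(a)|F(\ol{c})$ via the $p$-independence criterion of \cite[Lemma 2.2]{at}, whereas the paper works with relative $p$-bases and infers that separability from the separability of $F(a,\ol{c})|F$ — your handling of that step (which, as you note, is not a general tower property) is in fact the more explicit of the two.
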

\begin{proof}
\begin{enumerate}[wide,  label*={\arabic*.}]
    \item First, $\ol{c}$ is $p$-independent in $F(\ol{c})$ over $F.$ Indeed, if there is some $c\in\ol{c}$ such that $c\in (F(\ol{c}))^{p}F(\ol{c}\setminus\{c\})\subseteq M^{p}F(\ol{c}\setminus\{c\}),$ then neither $\ol{c}$ nor $\ol{a}$ are $p$-independent in $M$ over $F.$ Second, $\ol{c}$ is a $p$-spanning tuple of $F(\ol{c})$ over $F,$ because $(F(\ol{c}))^{p}F(\ol{c})=F(\ol{c}),$ as wanted.

    \item Otherwise, the extension $F(\ol{c})(a)|F(\ol{c})$ is algebraic. Since $M|F$ is separable, then so is $F(a,\ol{c})|F.$ It follows that $F(\ol{c})(a)|F(\ol{c})$ would be separable-algebraic. Since $\ol{c}$ is a $p$-basis of $F(\ol{c})$ over $F$ by statement 1, then $\ol{c}$ must be a $p$-basis of $F(a,\ol{c})$ over $F$ too, by Lemma \ref{ersgoesup}. It follows that $a\in (F(a,\ol{c}))^{p}F(\ol{c})=F(a^{p},\ol{c})\subseteq M^{p}F(\ol{c}),$ contradicting $p$-independence of $\{a\}\cup\ol{c}$ in $M$ over $F.$
    \qedhere
\end{enumerate} 
\end{proof}

We finish with a well-known result about so-called \emph{separated} extensions $M|F,$ cf. \cite[Lemma 4.16]{a}.

\begin{lema}
    \label{fincosep}
    Let $M|F$ be a separable extension, and suppose that $M$ and $F$ have the same imperfection degree. If $\i:F\to N$ is a separable embedding and $\widetilde{\i}:M\to N$ is a ring embedding extending $\i,$ then $\widetilde{\i}$ is separable too. 
\end{lema}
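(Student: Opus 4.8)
The plan is to manufacture a $p$-basis of $\widetilde{\i}(M)$ which is $p$-independent in $N$. Once this is done, the structural characterization of separable extensions in terms of $p$-bases — namely \cite[Lemma 2.2]{at}, the same fact already invoked in the proofs of Lemmas \ref{sepcore} and \ref{sepcorr} — gives immediately that $N\mid\widetilde{\i}(M)$ is separable, i.e.\ that $\widetilde{\i}$ is separable.

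To produce such a $p$-basis, fix a $p$-basis $c$ of $F$. Since $M\mid F$ is separable, $c$ is $p$-independent in $M$ (this is the easy direction of \cite[Lemma 2.2]{at}, exactly as used in the proof of Lemma \ref{sepcore}). This is the one place where the equality of imperfection degrees enters: $c$ is a $p$-independent tuple of $M$ whose length equals the imperfection degree of $M$, hence — the common imperfection degree being finite — it is a \emph{maximal} $p$-independent tuple of $M$, and a maximal independent tuple of the $p$-closure pregeometry of $M$ is a basis; so $c$ is actually a $p$-basis of $M$.

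Now I would transport this along $\widetilde{\i}$. Since $\widetilde{\i}$ restricts to a field isomorphism $M\to\widetilde{\i}(M)$ carrying $c$ to $\widetilde{\i}(c)$, the tuple $\widetilde{\i}(c)$ is a $p$-basis of $\widetilde{\i}(M)$. But $c\subseteq F$, so $\widetilde{\i}(c)=\i(c)$, which is a $p$-basis of $\i(F)$; and $\i$ being separable means $N\mid\i(F)$ is separable, so $\i(c)$ is $p$-independent in $N$. Hence $\widetilde{\i}(c)$ is a $p$-basis of $\widetilde{\i}(M)$ which is $p$-independent in $N$, and \cite[Lemma 2.2]{at} concludes.

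The only substantive step is the passage from the statement that $c$ is a $p$-basis of $F$ to the statement that $c$ is a $p$-basis of $M$, which rests on separability of $M\mid F$ together with the finiteness of the common imperfection degree; everything else is routine bookkeeping with $p$-bases. (One could alternatively observe that, $c$ being a $p$-basis of both $F$ and $M$, the empty tuple is a $p$-basis of $M$ over $F$, and then apply Lemma \ref{sepcorr} with $a=\emptyset$; I would rather argue directly, so as not to feed an empty tuple into the concatenation Lemma \ref{comppind}.)
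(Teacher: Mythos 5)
Your proof is essentially identical to the paper's: both take a $p$-basis of $F$, observe it stays $p$-independent in $M$ by separability, upgrade it to a $p$-basis of $M$ using the equality of imperfection degrees, push it forward via $\widetilde{\i}$, and use that $\i$ is separable to see the image is $p$-independent in $N$, concluding by the $p$-basis characterization of separable extensions. Your parenthetical remark that the passage from "$p$-independent tuple of maximal cardinality" to "$p$-basis" needs finiteness of the imperfection degree is a genuinely astute point which the paper's proof glosses over, and in fact the lemma as applied in Step 12 of the Embedding Lemma relies on the stronger "separated" property ($M^pF=M$) rather than mere equality of possibly infinite imperfection degrees.
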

\begin{proof}
    We have to prove that $N|\widetilde{\i}M$ is separable. Let $a$ be a $p$-basis of $F.$ Since $M|F$ is separable, $a$ is $p$-independent in $M.$ Since the imperfection degrees of $M$ and of $F$ are equal, we even get that $a$ is a $p$-basis of $M.$ Since $\i$ is separable, we know that $N|\i F$ is separable. Since $\i a$ is $p$-independent in $F,$ then $\i a$ is $p$-independent in $N.$ In order to prove that $N|\widetilde{\i}M$ is separable, we can argue that $\widetilde{\i}a=\i a$ is a $p$-basis of $\widetilde{\i}M$ by, say, \cite[Lemma 2.2]{a}. But this follows from $a$ being a $p$-basis of $M$ and the fact that $\widetilde{\i}$ is a ring embedding. 
\end{proof}

\subsection{Valuation Theory}

We start with some well known lemmas that describe extensions of angular components.

\begin{lema}
\label{uac}
Let $(M,w)\supseteq(F,v)$ be an extension of valued fields. Let $\{a_i:i\in I\}$ be a subset of $M$ such that $wM/vF$ is generated by $\{w(a_i)+vF:i\in I\}.$ If $\ac_1$ and $\ac_2$ are two angular components of $(M,w)$ whose restrictions to $F$ coincide and such that $\ac_1(a_i)=\ac_2(a_i)$ for all $i\in I,$ then $\ac_1=\ac_2.$
\end{lema}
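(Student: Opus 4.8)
The plan is to reduce an arbitrary element of $M^{\times}$ to a valuation unit times a monomial in the $a_i$ and an element of $F$, using the hypothesis on the value groups, and then to exploit multiplicativity of angular components together with the fact that both $\ac_1$ and $\ac_2$ agree with the residue map on units of the valuation ring and agree with each other on $F$.

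First I would fix $x\in M^{\times}$, the case $x=0$ being trivial since $\ac_1(0)=0=\ac_2(0)$ by definition of an angular component map. Since $wM/vF$ is generated by the cosets $\{w(a_i)+vF:i\in I\}$, there are indices $i_1,\.,i_n\in I$, integers $k_1,\.,k_n\in\Z$, and an element $f\in F^{\times}$ such that
$$w(x)=\sum_{j=1}^{n}k_j\,w(a_{i_j})+v(f).$$
Setting $u:=x\cdot f^{-1}\cdot\prod_{j=1}^{n}a_{i_j}^{-k_j}$, we get $w(u)=0$, so $u$ is a unit of the valuation ring $\O$ of $(M,w)$; in particular $\ac_1(u)=\res(u)=\ac_2(u)$, since an angular component map restricts to the residue map on $\O^{\times}$.

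Then, using multiplicativity of $\ac_1$ and $\ac_2$ together with the assumptions $\ac_1(a_{i_j})=\ac_2(a_{i_j})$ for all $j$ and $\ac_1\restriction_F=\ac_2\restriction_F$ (so $\ac_1(f)=\ac_2(f)$), I would compute
$$\ac_1(x)=\ac_1(u)\,\ac_1(f)\prod_{j=1}^{n}\ac_1(a_{i_j})^{k_j}=\ac_2(u)\,\ac_2(f)\prod_{j=1}^{n}\ac_2(a_{i_j})^{k_j}=\ac_2(x).$$
As $x\in M^{\times}$ was arbitrary, $\ac_1=\ac_2$.

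There is no serious obstacle here; the only point requiring a little care is that the generation hypothesis only decomposes $w(x)$ \emph{modulo} $vF$, which is exactly why the correction factor $f\in F^{\times}$ must be introduced — and this is harmless precisely because $\ac_1$ and $\ac_2$ are assumed to agree on $F$.
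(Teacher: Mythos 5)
Your proof is correct and follows essentially the same route as the paper: decompose $w(x)$ modulo $vF$ as the value of a monomial in the $a_i$ times an element of $F$, then use multiplicativity and the fact that an angular component restricts to the residue map on valuation units. The only difference is that you write the integer exponents $k_j$ explicitly, whereas the paper's notation $w(a_{i_1})+\dots+w(a_{i_n})$ leaves them implicit; yours is a touch more careful on that point.
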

\begin{proof}
    If $x\in M^{\times},$ then $w(x)+vF=w(a_{i_1})+\.+w(a_{i_n})+vF$ for some $i_1,\.,i_n\in I.$ It follows that there is some $b\in F$ such that $$w\left(\dfrac{x}{b\cdot a_{i_1}\cdot\.\cdot a_{i_n}}\right)=0,$$ implying that
    \begin{align*}
    \ac_1(x)&=\ac_1(b)\cdot\ac_1(a_{i_1})\cdot\.\cdot\ac_1(a_{i_n})\cdot\res_w\left(\dfrac{x}{b\cdot a_{i_1}\cdot\.\cdot a_{i_n}}\right)\\
    &=\ac_2(b)\cdot\ac_2(a_{i_1})\cdot\.\cdot\ac_2(a_{i_n})\cdot\res_w\left(\dfrac{x}{b\cdot a_{i_1}\cdot\.\cdot a_{i_n}}\right)\\
    &=\ac_2(x).
    \qedhere
    \end{align*}
\end{proof}

\begin{defin}
    We say that a valued field extension $(M,w)\supseteq(F,v)$ is \emph{unramified} if $wM=vF.$
\end{defin}
In particular, immediate extensions are unramified. This definition usually requires the residue field extension $Mw|Fv$ to be separable, though we will not need this requirement in this note.

\begin{cor}
    Let $(M,w)\supseteq(F,v)$ be an unramified field extension, and let $\ac$ be an angular component of $(F,v).$ Then $\ac$ extends uniquely to an angular component of $(M,w).$
\end{cor}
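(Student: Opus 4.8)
The statement to prove is: if $(M,w)\supseteq(F,v)$ is an unramified extension of valued fields and $\ac$ is an angular component map on $(F,v)$, then $\ac$ extends uniquely to an angular component map on $(M,w)$.

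The plan is to reduce everything to Lemma~\ref{uac}, which already gives uniqueness, and to construct the extension by hand using the fact that $wM = vF$. First I would recall that an angular component map is a group homomorphism $\ac\colon M^{\times}\to (Mw)^{\times}$ whose restriction to units of the valuation ring reduces to the residue map, and which is compatible with the residue map on $1+\mathfrak{m}_w$; concretely it suffices to split the valuation so that $\ac(x) = \res_w(x/\pi_{w(x)})$ for a chosen section. Since $wM = vF$, for every $\gamma\in wM$ I can pick once and for all an element $b_\gamma\in F^{\times}$ with $v(b_\gamma)=\gamma$, taking $b_0 = 1$; this is a set-theoretic section of $w$ through $F$. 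Then for $x\in M^{\times}$, writing $\gamma = w(x)$, I would \emph{define} $\ac_w(x) := \ac(b_\gamma)\cdot \res_w(x/b_\gamma)$, noting that $x/b_\gamma$ is a $w$-unit so the residue is a nonzero element of $Mw$, and $\ac(b_\gamma)\in (Fv)^{\times}\subseteq (Mw)^{\times}$ makes sense.

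The key steps are then: (i) check $\ac_w$ is well-defined — it is, since $b_\gamma$ is fixed for each $\gamma$; (ii) check it extends $\ac$ — for $x\in F^{\times}$ with $w(x)=\gamma$ we have $\ac_w(x) = \ac(b_\gamma)\res_w(x/b_\gamma) = \ac(b_\gamma)\res_v(x/b_\gamma) = \ac(b_\gamma)\ac(x/b_\gamma) = \ac(x)$, using that $\res_v$ agrees with $\res_w$ on $F$-units and that $\ac$ on $F$ already satisfies $\ac(u) = \res_v(u)$ for $v$-units; (iii) check the two defining axioms of an angular component map: multiplicativity follows from $b_{\gamma+\delta}/(b_\gamma b_\delta)$ being an $F$-unit, $\ac$ being multiplicative on $F^{\times}$, and $\res_w$ being multiplicative on $w$-units, after expanding $\ac_w(xy)$; and for a $w$-unit $x$ one has $w(x)=0$, $b_0 = 1$, so $\ac_w(x) = \ac(1)\res_w(x/1) = \res_w(x)$, which is exactly the compatibility condition. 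For uniqueness, any other extension $\ac'$ agrees with $\ac_w$ on $F$ by hypothesis, and the set $\{a_i\} = \emptyset$ works in Lemma~\ref{uac} since $wM/vF = 0$ is generated by the empty set; hence $\ac' = \ac_w$.

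I do not expect any serious obstacle here — this is a routine verification. The only point requiring a moment's care is the multiplicativity check, where one must track that the ``correction factor'' $b_{\gamma+\delta}/(b_\gamma b_\delta)$ lands in $F^{\times}$ with value $0$ and that $\ac$ applied to it equals its $\res_v$, which then combines correctly with the residues of the $w$-unit parts; but this is precisely the standard cocycle bookkeeping for extending angular components along a value-group section, and no new idea is needed. The essential input that makes it work is the hypothesis $wM = vF$, which guarantees the section of $w$ can be chosen entirely inside $F$.
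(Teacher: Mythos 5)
Your proposal is correct and matches the paper's proof: the paper likewise gets uniqueness from Lemma~\ref{uac} (with the empty generating set, since $wM=vF$) and defines the extension by $\ac_w(x):=\ac(b)\cdot\res_w(x/b)$ for $b\in F$ with $v(b)=w(x)$. The only cosmetic difference is that you fix a section $\gamma\mapsto b_\gamma$ once and for all and then verify multiplicativity via the cocycle bookkeeping, whereas the paper allows an arbitrary choice of $b$ for each $x$ and leaves the (equivalent) independence-of-choice check implicit.
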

\begin{proof}
Uniqueness follows from Lemma \ref{uac}. Existence follows by noting that $\ac_w(x):=\ac(b)\cdot\res_w(x/b)$ is a well defined angular component map for $(M,w)$ that extends $\ac,$ where for the given $x\in M^{\times}$ we choose any $b\in F$ such that $w(x)=v(b).$ 
\end{proof}

Now we turn our attention to $\texttt{SAMK}$ and Artin-Schreier closed valued fields.

\begin{lema}
\label{samkasc}
    Let $(M,v)$ be a $\texttt{SAMK}$ valued field. Then $M$ is Artin-Schreier closed.
\end{lema}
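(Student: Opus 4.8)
The plan is to argue by contradiction that an Artin--Schreier polynomial over a $\texttt{SAMK}$ field cannot remain irreducible, because the extension generated by a root can grow neither in the value group, nor in the residue field, nor in the defect, and hence cannot exist. Fix $a\in M$ and set $g(X)=X^{p}-X-a$; this polynomial is separable since $g'=-1$. Let $x$ be a root of $g$ in an algebraic closure of $M$. Recall from the characterization of separable-algebraic maximality recalled after its definition that $M$ is henselian, so $v$ extends uniquely to $M(x)$ and $(M(x),v)$ is again henselian. Since $x+i$ is again a root of $g$ for every $i\in\F_p$, the polynomial $g$ splits over $M(x)$ and $\Gal(M(x)|M)$ embeds into the additive group $\F_p$; being of prime order it is trivial or all of $\F_p$, so $[M(x):M]\in\{1,p\}$. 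It therefore suffices to rule out $[M(x):M]=p$. Assuming this, the fundamental equality for finite extensions of henselian valued fields gives $p=e\cdot f\cdot d$, where $e=[v(M(x)):vM]$, $f=[M(x)v:Mv]$, and $d$ is the defect, which by Ostrowski's lemma is a power of $p$; hence exactly one of $e,f,d$ equals $p$ and the other two equal $1$.

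First I would rule out $e=p$. In that case there is some $\delta\in v(M(x))\setminus vM$ with $p\delta\in vM$; since $vM$ is $p$-divisible (as $M$ is Kaplansky), write $p\delta=p\eta$ with $\eta\in vM$, and since the value group $v(M(x))$ is torsion-free we get $\delta=\eta\in vM$, a contradiction. Next I would rule out $f=p$. In that case $M(x)v\,|\,Mv$ is an algebraic extension of degree $p$; since $Mv$ is perfect (again because $M$ is Kaplansky), this extension is separable, so $Mv$ would admit a proper separable-algebraic extension of degree divisible by $p$, contradicting the third Kaplansky condition on $Mv$.

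Therefore $e=f=1$ and $d=p$, i.e.\ $M(x)\,|\,M$ is a proper immediate extension; moreover it is separable-algebraic because $g$ is separable, contradicting the hypothesis that $M$ is separable-algebraically maximal. Hence $[M(x):M]=1$, so $g$ has a root in $M$, and since $a\in M$ was arbitrary, $M$ is Artin--Schreier closed. Each individual step is short; the only points requiring care are that the defect equality is available (which uses henselianity of $M$, guaranteed here) and that the degree-$p$ residue extension in the case $f=p$ is separable (which uses perfectness of $Mv$). I do not expect any genuine obstacle beyond correctly invoking these standard facts.
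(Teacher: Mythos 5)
Your proof is correct and follows essentially the same route as the paper's: both reduce a putative degree-$p$ Artin--Schreier extension to the fundamental (Ostrowski) equality $p = d\cdot e\cdot f$ over the henselian base, then rule out $e=p$ and $f=p$ via the Kaplansky conditions and rule out $d=p$ via separable-algebraic maximality. The only difference is that you spell out the two Kaplansky cases ($p$-divisibility of $vM$ killing ramification, perfection of $Mv$ making the residue extension separable) which the paper dispatches in one line.
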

\begin{proof}
    Let $N|M$ be an Artin-Schreier extension, i.e. a Galois extension of degree $p.$ Since $(M,v)$ is henselian, the valuation $v$ extends uniquely to $N,$ and for such an extension we have that 
        $$p=[N:M]=p^{d}(vN:vM)[Nv:Mv]$$
    for some $d\in\N.$ If $d=1$ then $(N|M,v)$ is immediate, which is impossible for $(M,v)$ is separable-algebraically maximal. Therefore $d=0$ and $p$ divides either $(vN:vM)$ or $[Nv:Mv],$ which is also impossible because $(M,v)$ is Kaplansky.
\end{proof}

For a given field $M,$ its \emph{Artin-Schreier closure} $M^{\text{AS}}$ is defined to be the algebraic extension of $M$ obtained by successively adjoining Artin-Schreier roots of elements of $M,$ having fixed an algebraic closure $M^{alg}$ of $M.$ This is, if $M_0:=M$ and $$M_{n+1}:=M_n(a\in M^{alg}:a^{p}-a\in M_n),$$ then we may set $M^{\text{AS}}$ to be $\bigcup_{n<\omega}M_n.$ However, if $v$ is a valuation on $M^{alg},$ it will be enough for us to consider the same closure procedure on $M$ but only taking into account Artin-Schreier roots of elements of \emph{negative} valuation. This is, if $M_0:=M$ and $$M_{n+1}:=M_n(a\in M^{alg}:a^{p}-a\in M_n\text{ and }v(a^{p}-a)<0),$$ we define the \emph{(ad-hoc) Artin-Schreier closure} $(M^{as},v)$ of $(M,v)$ as the valued field obtained in $M^{alg}$ by $M^{as}:=\bigcup_{n<\omega}M_n.$ By Lemma \ref{samkasc}, we get that if $(M,v)$ is a $\texttt{SAMK}$ valued field and $F$ is a subfield of $M,$ then $F^{as}\subseteq M.$ 

\begin{lema}
\label{asclosure}
    Let $(F,v)$ be a non-trivial valued field of characteristic $p>0$. 
    \begin{enumerate}[label*={\arabic*.}]
        \item If $F$ closed under Artin-Schreier roots of elements of negative valuation, then $vF$ is $p$-divisible and $Fv$ is perfect.     

        \item $(Fv)^{1/p^{\infty}}\subseteq F^{as}v$ and $\frac{1}{p^{\infty}}vF\subseteq vF^{as}.$ 
    \end{enumerate}
\end{lema}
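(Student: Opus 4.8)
The plan is to handle the two statements in order, reducing each to a local computation about a single Artin-Schreier extension and then iterating through the tower $F_0 \subseteq F_1 \subseteq \cdots$ defining $F^{as}$.

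For statement 1, I would argue the contrapositive-style claim directly: assuming $F$ is closed under Artin-Schreier roots of elements of negative valuation, I want $vF$ to be $p$-divisible and $Fv$ to be perfect. For $p$-divisibility of $vF$, take $\gamma \in vF$ with $\gamma < 0$, say $\gamma = v(a)$ with $v(a) < 0$; then $a^p - a$ has valuation $p\,v(a) = p\gamma < 0$ (the term $a^p$ dominates since $v(a^p) = p\,v(a) < v(a)$), so there is $b \in F$ with $b^p - b = a^p - a$... actually I want to go the other way: I want a $p$-th root of $\gamma$ in $vF$. Given $\gamma = v(c) < 0$, consider the polynomial $X^p - X - c$; hmm, this gives an element of valuation $\tfrac1p v(c)$ only if $v(c)$ is the dominant term, which requires $v(c) < 0$ and then the root $d$ satisfies $p\,v(d) = v(c)$, i.e. $v(d) = \tfrac1p\gamma$. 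So $X^p - X = c$ has a root $d$ with $v(d) = \gamma/p$, and since $c$ has negative valuation this root lies in $F$ by hypothesis; hence $\gamma/p \in vF$. Since every negative element of $vF$ is then $p$-divisible in $vF$, and $vF$ is a group, $vF$ is $p$-divisible. For perfection of $Fv$, let $\bar\alpha \in Fv$ with $\alpha \in F$, $v(\alpha) = 0$; I want a $p$-th root of $\bar\alpha$ in $Fv$. Here I should instead use an element of negative valuation: pick $t \in F$ with $v(t) < 0$ and $p \nmid$ ... actually the cleanest route is: choose $s \in F$ with $v(s) < 0$; then $s^p$ has negative valuation, so $X^p - X = s^p \alpha - \text{(something)}$... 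Let me restructure — take $c \in F$ with $v(c) < 0$; the Artin-Schreier root $d$ of $X^p - X = c$ satisfies $d^p/c \to 1$ in residue, giving control. The standard argument: for any $\bar\alpha \in Fv$ and any $c \in F$ with $v(c) < 0$, solve $X^p - X = c^p \alpha$ (valuation $p\,v(c) < 0$) to get $d \in F$ with $(d/c)^p = \alpha + \text{lower order}$, whence $\overline{d/c}^{\,p} = \bar\alpha$ in $Fv$. This shows $Fv$ is perfect.

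For statement 2, I first observe that $F^{as} = \bigcup_{n<\omega} F_n$ where $F_{n+1} = F_n(a : a^p - a \in F_n,\ v(a^p-a) < 0)$, and I claim by induction on $n$ that $(F_n v)^{1/p} \subseteq F_{n+1}v$ and $\tfrac1p\, vF_n \subseteq vF_{n+1}$; the two inclusions $(Fv)^{1/p^\infty} \subseteq F^{as}v$ and $\tfrac1{p^\infty} vF \subseteq vF^{as}$ then follow by taking the union. For the inductive step, given $\gamma \in vF_n$ with $\gamma < 0$, writing $\gamma = v(c)$, the root $d$ of $X^p - X = c$ lies in $F_{n+1}$ by construction and has $v(d) = \gamma/p \in vF_{n+1}$; for $\gamma \geq 0$ I reduce to the negative case by subtracting off an element of negative valuation already handled, or simply note $\gamma/p = (\gamma + p\delta)/p - \delta$ for a suitable $\delta$ with $\gamma + p\delta < 0$. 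Similarly, given $\bar\alpha \in F_n v$, I use the same trick as in statement 1 — pick $c \in F_n$ with $v(c) < 0$, solve $X^p - X = c^p\alpha$ inside $F_{n+1}$, and read off a $p$-th root of $\bar\alpha$ in $F_{n+1}v$.

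I expect the only mild obstacle to be the bookkeeping at valuation $0$: Artin-Schreier roots of elements of negative valuation control exactly the negative part of the value group and (via rescaling) the whole residue field, so one must insert the small step of rescaling by an element of negative valuation to reach arbitrary residues and to reach non-negative elements of the value group. Once that rescaling trick is in place, both parts are routine valuation-theoretic computations, and the passage from a single Artin-Schreier extension to the full closure $F^{as}$ is a direct union/colimit argument.
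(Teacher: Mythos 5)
Your proof of statement 1 is essentially the paper's argument (which in turn adapts \cite[Lemma~4.4]{fvkpr}): for $p$-divisibility you use that for $c$ of negative valuation the Artin--Schreier root $a$ of $X^p-X=c$ satisfies $v(a)=v(c)/p$ (dominance of $a^p$), and you correctly note that covering the negative cone plus the group structure yields full $p$-divisibility; for perfection you use exactly the paper's rescaling trick of solving $X^p - X = c^p\alpha$ for a unit $\alpha$ and an auxiliary $c$ of negative valuation, so that $\res_v(d/c)^p = \res_v(\alpha)$. The self-corrections in the middle obscure the exposition, but the final computation is right, and you correctly (if implicitly) use the non-triviality hypothesis to produce the element of negative valuation.

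For statement 2 you take a genuinely different route from the paper. You unfold the tower $F_0\subseteq F_1\subseteq\cdots$ defining $F^{as}$ and prove, by induction, the one-step inclusions $(F_nv)^{1/p}\subseteq F_{n+1}v$ and $\tfrac1p vF_n\subseteq vF_{n+1}$, then take the union. This is correct (each $F_n$ contains $F$ and hence is non-trivially valued, so the rescaling trick is available at every stage), but it carries bookkeeping that the paper avoids entirely. The paper simply observes that $F^{as}$ is \emph{itself} closed under Artin--Schreier roots of elements of negative valuation --- by the very construction of the tower --- and therefore statement 1, applied to $(F^{as},v)$, gives $(F^{as}v)^{1/p^\infty}=F^{as}v$ and $\tfrac{1}{p^\infty}vF^{as}=vF^{as}$; the asserted inclusions then fall out of the containments $Fv\subseteq F^{as}v$ and $vF\subseteq vF^{as}$. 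Both arguments work; the paper's is a one-liner once statement 1 is in hand, while yours re-derives the same content level by level. If you keep your version, the only place to be careful is that you really do need to iterate (a single step $n\to n+1$ only gives $p$-th roots, not $p^\infty$-th roots), which you handle correctly by taking the union.
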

\begin{proof}
    \begin{enumerate}[wide,  label*={\arabic*.}]
        \item 
        The following proof is the one given in \cite[Lemma 4.4]{fvkpr}, adapted to our context. First, if $c\in F^{\times}$ is such that $v(c)<0$ and $a\in F$ satisfies $a^{p}-a=c,$ then $v(c)/p=v(a)\in vF.$ If $v(c)>0,$ apply the same argument for $v(c^{-1}).$ Second, if $c\in F$ is such that $\res_v(c)\neq 0,$ then $v(c)=0.$ Let $d\in F$ be such that $v(d)<0,$ so that $v(d^{p}c)<0,$ and let $a\in F$ be such that $a^{p}-a=d^{p}c.$ It follows that $v(a)=v(d)>v(d^{p}),$ so applying $\res_v$ to the equation $(a/d)^{p}-(a/d^{p})=c$ yields $\res_v(a/d)^{p}=\res_v(c),$ as wanted.     

        \item Since $(F,v)\subseteq(F^{as},v),$ then $Fv\subseteq F^{as}v$ and $vF\subseteq vF^{as}.$ Hence $(Fv)^{1/p^{\infty}}\subseteq (F^{as}v)^{1/p^{\infty}}=F^{as}v$ and $\frac{1}{p^{\infty}}vF\subseteq\frac{1}{p^{\infty}}vF^{as}=vF^{as},$ where the equalities follow from statement 1.
        \qedhere
    \end{enumerate}
\end{proof}

\begin{lema}
\label{perfcoresat}
    Let $(M,v)$ be an $\aleph_0$-saturated valued field of characteristic $p>0$ and with perfect residue field and $p$-divisible value group.  
    \begin{enumerate}[wide, label*={\arabic*.}]
        \item For each $\a\in Mv$ there is some $a\in\O_v$ such that $\F_p\left(a^{1/p^{\infty}}\right)\subseteq M$ and $\res_v(a^{1/p^{n}})=\a^{1/p^{n}}$ for all $n<\omega.$

        \item For each $\g\in vM$ there is some $a\in M$ such that $\F_p\left(a^{1/p^{\infty}}\right)\subseteq M$ and $v(a^{1/p^{n}})=\g/p^{n}$ for all $n<\omega.$
    \end{enumerate}
\end{lema}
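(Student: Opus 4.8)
The statement asks, given an $\aleph_0$-saturated valued field $(M,v)$ of characteristic $p$ with perfect residue field and $p$-divisible value group, to find for each residue element $\alpha$ (resp. each value $\gamma$) a single element $a$ whose iterated $p$-th roots all lie in $M$ and realize the prescribed iterated $p$-th roots of $\alpha$ (resp. divisions of $\gamma$). The plan is to write down, for fixed $\alpha \in Mv$, a type over $M$ in countably many variables $(x_n)_{n<\omega}$ expressing: $v(x_n) = 0$ for all $n$; $x_n^p = x_{n-1}$ for all $n \geq 1$ (setting $x_0 = x$); $\res_v(x_0) = \alpha$; and $x_n \in \mathcal{O}_v$. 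Finite consistency of this type is immediate in $(M,v)$ itself: since $Mv$ is perfect, $\alpha$ has a $p^k$-th root $\alpha^{1/p^k}$ in $Mv$ for each $k$, so for each finite fragment involving $x_0,\dots,x_k$ we may lift $\alpha^{1/p^k}$ to some element of $\mathcal{O}_v$ and take its successive $p$-th powers as witnesses for $x_k, x_{k-1}, \dots, x_0$. By $\aleph_0$-saturation the full type is realized by some sequence $(a^{1/p^n})_{n<\omega}$ in $M$, and then $\F_p(a^{1/p^\infty}) \subseteq M$ by construction while $\res_v(a^{1/p^n}) = \alpha^{1/p^n}$ because $\res_v$ is a ring homomorphism and $(\res_v(a^{1/p^{n+1}}))^p = \res_v(a^{1/p^n})$, forcing the residues to be exactly the iterated roots of $\alpha$.

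For statement 2 the argument is entirely parallel, working now with the value group rather than the residue field. Fix $\gamma \in vM$ and consider the type over $M$ in variables $(x_n)_{n<\omega}$ saying $x_n^p = x_{n-1}$ (with $x_0 = x$) and $v(x_0) = \gamma$, together with $x_n \neq 0$. Finite consistency follows from $p$-divisibility of $vM$: given a finite fragment up to $x_k$, pick any $b \in M$ with $v(b) = \gamma/p^k$, which exists since $\gamma/p^k \in vM$, and let $x_k, \dots, x_0$ be $b, b^p, \dots, b^{p^k}$. Saturation yields a realization $(a^{1/p^n})_{n<\omega}$ in $M$ with $\F_p(a^{1/p^\infty}) \subseteq M$, and $v(a^{1/p^n}) = \gamma/p^n$ holds automatically because $p \cdot v(a^{1/p^{n+1}}) = v(a^{1/p^n})$ and $vM$, being an ordered abelian group, is torsion-free so the $p$-th root of an element is unique.

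The only point requiring a little care — and the one I would single out as the main (mild) obstacle — is verifying that the realized sequence genuinely consists of compatible $p$-th roots of a single element $a$, i.e. that we may legitimately write the realization as $a^{1/p^n}$; this is simply the observation that the conditions $x_n^p = x_{n-1}$ are part of the type, so any realization has $x_0 = (x_1)^p = ((x_2)^p)^p = \cdots$, letting us set $a := x_0$ and $a^{1/p^n} := x_n$. One should also note that the type has size at most $|M| + \aleph_0$ in countably many variables with parameters from $M$; but since only finitely many parameters (in fact, parameters naming $\alpha$ or $\gamma$, which live in the residue or value sorts) appear and the type is a countable set of formulas, $\aleph_0$-saturation of $(M,v)$ in the three-sorted language suffices. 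No deeper valuation-theoretic input is needed beyond perfectness and $p$-divisibility, which are exactly the hypotheses.
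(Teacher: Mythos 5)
Your proof is correct in substance and takes essentially the same route as the paper's, which likewise constructs a partial type whose finite fragments are witnessed using perfectness of $Mv$ (resp.\ $p$-divisibility of $vM$) and then invokes $\aleph_0$-saturation. Two small remarks are worth flagging. The paper expresses each type in a \emph{single} variable $x$ (the eventual element $a$), with the iterated $p$-th roots hidden under existential quantifiers $\exists y(y^{p^n}=x\wedge\cdots)$; this sidesteps the point you gloss over, namely that realizing a countable type in $\omega$-many variables over a finite parameter set in an $\aleph_0$-saturated structure is true but not the literal definition (which is stated for finitely many variables) and needs a short inductive argument realizing the $x_n$'s one at a time. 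Also, for statement~1 your clause $v(x_n)=0$ is inconsistent with $\underline{\res}(x_0)=\alpha$ in the case $\alpha=0$; you should keep only $x_n\in\O_v$ (i.e.\ $v(x_n)\geq 0$), as the paper does, which handles $\alpha=0$ uniformly.
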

\begin{proof}
    \begin{enumerate}[wide,  label*={\arabic*.}]
        \item Let $\pi(x)$ be the partial type\footnote{In a three sorted language with sorts for the valued field, the residue field and the value group, and where $\underline{v}$ is a symbol for the valuation map and $\underline{\res}$ is a symbol for the residue map.} over $\a$ given by $$\{\underline{v}(x)\geq 0\}\cup\left\{\E y\left(y^{p^{n}}=x \wedge \underline{v}(y)\geq 0 \wedge \underline{\res}(y)^{p^{n}}=\a\right):n<\omega\right\}.$$
        Any realization $a\in M$ of $\pi(x)$ would be the element we are looking for. Since $(M,v)$ is $\aleph_0$-saturated, it is enough to show that $\pi(x)$ is finitely satisfiable. To this end, if $n<\omega,$ then $\a^{1/p^{n}}\in Mv$ because $Mv$ is perfect. Let $c\in\O_v$ be such that $\res_v(c)=\a^{1/p^{n}}.$ Then, for all $i\in\{0,\.,n\},$ we have that $c^{p^{n-i}}\in\O_v$ and $$\res_v\left(c^{p^{n-i}}\right)=\res_v(c)^{p^{n-i}}=\a^{p^{n-i}/p^{n}}=\a^{1/p^{i}},$$ meaning that $c^{p^{n}}$ is a realization of the finite fragment $$\{\underline{v}(x)\geq 0\}\cup\left\{\E y\left(y^{p^{i}}=x \wedge \underline{v}(y)\geq 0 \wedge \underline{\res}(y)^{p^{i}}=\a\right):i\in\{0,\.,n\}\right\}\subseteq\pi(x),$$ as wanted.

        \item Let $\pi(x)$ be the partial type over $\g$ given by $$\left\{\E y\left(y^{p^{n}}=x \wedge p^{n}\underline{v}(y)=\g\right):n<\omega\right\}.$$
        Any realization $a\in M$ of $\pi(x)$ would be the element we are looking for. As above, it is enough to show that $\pi(x)$ is finitely satisfiable. To this end, if $n<\omega,$ then $\g/p^{n}\in vM$ because $vM$ is $p$-divisible. Let $c\in M$ be such that $v(c)=\g/p^{n}.$ Then, for all $i\in\{0,\.,n\},$ we have that  $$v\left(c^{p^{n-i}}\right)=p^{n-i}\cdot v(c)=\dfrac{p^{n-i}}{p^{n}}\g=\g/p^{i},$$ meaning that $c^{p^{n}}$ is a realization of the finite fragment $$\left\{\E y\left(y^{p^{i}}=x\wedge p^{i}\underline{v}(y)=\g\right):i\in\{0,\.,n\}\right\}\subseteq\pi(x),$$ as wanted.
        \qedhere
    \end{enumerate}
\end{proof}

\begin{lema}
\label{nwhit}
    Let $(M,v,\ac)$ be an $\ac$-valued field, and let $F$ be a subfield of $M$ such that $Fv=Mv$.
    \begin{enumerate}[label*={\arabic*.}]
        \item If $\g\in vM,$ then there is some $a\in M$ such that $v(a)=\g$ and $\ac_v(a)=1.$ If $\g\in vF,$ then there is some $c\in F$ such that $v(c)=\g$ and $\ac_v(c)=1.$

        \item Suppose $(M,v)$ is henselian. If $\g\in vM$ and $n\geq1$ is the minimum such that $n\g\in vF,$ and if $n$ and $p$ are coprime, then there is some $a\in M$ such that $v(a)=\g,$ $a^{n}\in F$ and $\ac_v(a)=1.$
    \end{enumerate} 
\end{lema}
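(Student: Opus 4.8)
The plan is to prove part 1 by a direct correction argument and then bootstrap it to part 2 via Hensel's lemma.

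For part 1 I would use the same one-step trick in both statements. Given $\g\in vM$, pick any $b\in M$ with $v(b)=\g$; then $\ac_v(b)\in Mv^{\times}$, so by surjectivity of the residue map $\res_v\colon\O_v\to Mv$ there is a unit $u\in\O_v$ (automatically $v(u)=0$) with $\res_v(u)=\ac_v(b)$, and $a:=b/u$ satisfies $v(a)=\g$ and $\ac_v(a)=\ac_v(b)\ac_v(u)^{-1}=\ac_v(b)\res_v(u)^{-1}=1$. For the second statement, if in addition $\g\in vF$, I would start from $b\in F$; the hypothesis $Fv=Mv$ gives $\ac_v(b)\in Mv^{\times}=Fv^{\times}$, so the correcting unit $u$ can be chosen inside $\O_v\cap F$, yielding $c:=b/u\in F$ with $v(c)=\g$ and $\ac_v(c)=1$.

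For part 2, I would first apply part 1 twice: since $n\g\in vF$, there is $c\in F$ with $v(c)=n\g$ and $\ac_v(c)=1$, and there is $b\in M$ with $v(b)=\g$ and $\ac_v(b)=1$. The point is then to replace $b$ by a unit multiple whose $n$-th power is exactly $c$. Consider $\epsilon:=b^{n}/c$: it is a unit, since $v(\epsilon)=n\g-n\g=0$, and $\res_v(\epsilon)=\ac_v(b)^{n}\,\ac_v(c)^{-1}=1$. Because $\gcd(n,p)=1$ we have $v(n)=0$ in $M$, so for $f(X)=X^{n}-\epsilon$ one has $v(f(1))=v(1-\epsilon)>0=2v(f'(1))$, and henselianity of $(M,v)$ produces a root $w\in M$ with $w\equiv 1\pmod{\m_v}$, hence $\res_v(w)=1$ and $v(w)=0$. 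Setting $a:=b/w$ gives $v(a)=\g$, $a^{n}=b^{n}/w^{n}=b^{n}/\epsilon=c\in F$, and $\ac_v(a)=\ac_v(b)\,\res_v(w)^{-1}=1$, as wanted. (The minimality of $n$ plays no role in the argument; it only explains why coprimality to $p$ is the natural hypothesis, the $p$-part being handled separately, e.g.\ through $p$-divisibility of the value group.)

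I expect the only genuine obstacle to be forcing $\ac_v(a)$ to equal $1$ on the nose rather than merely some $n$-th root of unity — solving $a^{n}=c$ blindly only yields $\ac_v(a)^{n}=1$. The resolution is exactly the bookkeeping above: arrange via part 1 that both $b$ and $c$ already have angular component $1$, so that the unit whose $n$-th root we extract reduces to $1$, and then exploit $p\nmid n$ so that Hensel's lemma pins down the root reducing to $1$.
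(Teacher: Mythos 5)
Your proof is correct and follows essentially the same route as the paper's: part~1 by dividing off a unit whose residue matches the angular component, and part~2 by forming the unit $a^n/c$ (your $\epsilon$), which has residue $1$, extracting its $n$-th root via Hensel's lemma using $p\nmid n$, and dividing. Your parenthetical observation that minimality of $n$ is not actually invoked matches the paper's proof, which also does not use it.
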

\begin{proof}
    \begin{enumerate}[wide,  label*={\arabic*.}]
        \item Let $a\in M$ be such that $v(a)=\g,$ and let $b\in M$ be such that $v(b)=0$ and $\ac_v(b)=\res_v(b)=\ac_v(a).$ Then $\ac_v(a/b)=1$ and $v(a/b)=v(a)=\g.$ Now, if $\g\in vF,$ let $c\in F$ be such that $v(c)=\g.$ Since $\ac_v(c)\in Mv=F_v,$ there is some $d\in F$ such that $v(d)=0$ and $\ac_v(d)=\res_v(d)=\ac_v(c).$ It follows that $c/d\in F,$ that $\ac_v(c/d)=1$ and that $v(c/d)=v(c)=\g,$ as wanted.   

        \item Let $a\in M$ and $c\in F$ be such that $v(a)=\g$ and $v(c)=n\g,$ with $\ac_v(a)=\ac_v(c)=1.$ Then $v(a^{n})=v(c),$ so $v(a^{n}/c)=0$ and $\ac_v(a^{n}/c)=\res_v(a^{n}/c)=1.$ Since $n$ and $p$ are coprime, the polynomial $X^{n}-1$ is separable. By henselianity of $M,$ we find some $d\in M$ such that $v(d-1)>0$ and $d^{n}=a^{n}/c.$ It follows that $v(d)=0$ and $\ac_v(d)=\res_v(d)=1,$ and thus $(a/d)^{n}=c\in F,$ $v(a/d)=v(a)-v(d)=v(a)=\g$ and $\ac_v(a/d)=(\ac_v(a)/\ac_v(d))=1,$ as wanted.
        \qedhere
    \end{enumerate}
\end{proof}

The following result is a slight modification of the corresponding \say{going down} property for tame fields, cf. \cite[Lemma 3.7]{fvktf}. 
\begin{lema}
\label{pop}
    If $(M,v)$ is a \texttt{SAMK}$_p$ valued field and $M|K$ is regular, then $(K,v)$ is separable-algebraically maximal.
\end{lema}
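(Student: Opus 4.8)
\emph{Overview and henselianity.} I would verify the two defining features of separable-algebraic maximality for $(K,v)$: that it is henselian, and that it has no proper separable-algebraic immediate extension (the latter being the definition, and, by \cite[Theorem 2.4]{fvkp}, equivalent to henselianity plus extremality for separable one-variable polynomials). For henselianity, recall that $M|K$ regular means $K$ is relatively algebraically closed in $M$ and $M|K$ is separable; hence restriction of field automorphisms induces an isomorphism $\Gal(M^{sep}/M)\xrightarrow{\ \sim\ }\Gal(K^{sep}/K)$. Since $(M,v)$ is henselian, $v$ extends uniquely to $M^{sep}$, so this extension is $\Gal(M^{sep}/M)$-invariant; restricting it to $K^{sep}$ yields an extension of $v|_K$ invariant under the image of the restriction map, i.e.\ under all of $\Gal(K^{sep}/K)$. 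As any two extensions of a valuation to a separable-algebraic extension are conjugate, invariance forces uniqueness, so $v|_K$ has a unique extension to $K^{sep}$ and $(K,v)$ is henselian.

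\emph{The key fact and the conclusion.} The decisive structural input is that a separable-algebraically maximal Kaplansky field has no finite separable extension of degree divisible by $p$. Indeed, for a finite separable extension $F|M$ one has $p\nmid e(F|M)$ and $p\nmid f(F|M)$ because $vM$ is $p$-divisible and $Mv$ is perfect and $p$-closed, and $d(F|M)=1$ because $(M,v)$ is Artin--Schreier closed by Lemma~\ref{samkasc} while, by the theory of defect extensions, a henselian equicharacteristic-$p$ field possessing a separable defect extension must possess an Artin--Schreier defect extension; hence $[F:M]=e(F|M)\,f(F|M)$ is prime to $p$. Transporting this through the isomorphism $\Gal(M^{sep}/M)\cong\Gal(K^{sep}/K)$ of the previous step, $(K,v)$ likewise admits no finite separable extension of degree divisible by $p$. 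Now if $L|K$ were a proper separable-algebraic immediate extension, then, using henselianity of $(K,v)$, we may take it finite; immediacy gives $e(L|K)=f(L|K)=1$, so by Ostrowski's lemma $[L:K]=d(L|K)=p^{k}$ with $k\ge 1$ --- a separable extension of $K$ of degree divisible by $p$, a contradiction. Hence $(K,v)$ has no proper separable-algebraic immediate extension, i.e.\ it is separable-algebraically maximal.

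\emph{Main obstacle.} The henselianity step and the final deduction are routine bookkeeping. The real content lies in the claim that $(M,v)$ has no separable extension of degree divisible by $p$, and inside that, in the assertion that $(M,v)$ is \emph{separably defectless}: this is where the Kaplansky conditions on the value group and residue field must be combined with Artin--Schreier closedness (Lemma~\ref{samkasc}) and with the classification of defect extensions of henselian equicharacteristic-$p$ fields. I expect that pinning down --- or, preferably, citing from the preliminary material on Kaplansky theory --- this defect-theoretic ingredient (that a separable defect extension yields an Artin--Schreier defect extension) is the delicate part of the argument; everything else then follows formally.
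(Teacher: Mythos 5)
Your proof takes a genuinely different route from the paper's, and it has a real gap exactly where you flagged one. The paper argues "going up": given a putative finite separable immediate extension $F|K$, regularity of $M|K$ gives linear disjointness of $M$ from $F$ over $K$, hence $[MF:M]=[F:K]=p^{d}$ with $MF|M$ separable; the fundamental equality for $(MF|M,v)$ together with $p$-divisibility of $vM$ and $p$-closedness of $Mv$ forces $MF|M$ to be immediate, which is impossible unless $MF=M$, whence $F=K$. This needs nothing beyond the definition of \texttt{SAMK} plus basic linear-disjointness bookkeeping, and in particular it does \emph{not} pass through separable defectlessness of $(M,v)$ as a separate ingredient.

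Your route instead tries to prove the stronger global statement that $(M,v)$ has no finite separable extension of degree divisible by $p$, and then transport it to $K$ via Galois theory. Two remarks. First, the "isomorphism $\Gal(M^{sep}/M)\cong\Gal(K^{sep}/K)$" is really only the \emph{surjection} given by restriction (regularity gives linear disjointness of $M$ from $K^{sep}$ over $K$, hence an isomorphism $\Gal(MK^{sep}/M)\cong\Gal(K^{sep}/K)$, but $M^{sep}$ can be strictly larger than $MK^{sep}$). Luckily a surjection suffices both for henselianity of $(K,v)$ and for transporting the "no open subgroup of index divisible by $p$" property, so this is a cosmetic error rather than a fatal one. (Incidentally, henselianity of $(K,v)$ follows more directly: $K^{h}$ embeds in $M$ over $K$, and regularity then forces $K^{h}=K$.) Second and more seriously, the heart of your argument — $d(F|M)=1$ for every finite separable $F|M$ — rests on the assertion that a henselian equicharacteristic-$p$ field with a separable defect extension must have an \emph{Artin--Schreier} defect extension. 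This is not established anywhere in the preliminaries, and it is not an obvious reduction: taking the Galois closure and a normal $p$-series gives an Artin--Schreier defect step over some intermediate field, not over $M$ itself. The clean way to close this gap is to cite the characterization \texttt{SAMK}$\;\Longleftrightarrow$ separably tame (the same \cite[Theorem~3.10]{fvktf} that the paper invokes in Lemma~\ref{popnotsep}), which gives separable defectlessness of $(M,v)$ directly. With that citation your argument would go through, but the paper's Lemma~\ref{pop} is deliberately organized so as not to rely on that nontrivial external result, which is precisely what the going-up argument buys you.
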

\begin{proof} 
    Let $(F,w)$ be a finite separable-algebraic immediate extension of $(K,v).$ By the fundamental equality, $[F:K]=p^{d}$ for some $d\in\N.$
    Since the extension $M|K$ is regular, it is linearly disjoint from $K^{alg}|K.$
    In particular, $M$ is linearly disjoint from $F$ over $K.$
    Hence, $[MF:M]=p^{d}$ as well.
    Since $F|K$ is separable and $[F:K]=[MF:M],$ the (separable) minimal polynomial over $K$ of any generator $a$ of $F|K$ would be as well the minimal polynomial of the generator $a$ of $MF$ over $M,$ so $MF|F$ is separable.
    Since $MF|M$ is separable-algebraic and $M$ is henselian, there is a unique extension of $v$ to $MF.$ It follows that $p^{d}=p^{e}[MFv:Mv](vMF:vM),$ i.e. $$p^{d-e}=[MFv:Mv](vMF:vM).$$
    If $d-e>0,$ then necessarily $p$ divides either $[MFv:Mv]$ or $(vMF:vM).$ But none of these situations are possible, because $vM$ is $p$-divisible and $Mv$ does not admit any finite extension of degree divisible by $p.$
    It follows that $d=e,$ so the extension $(MF|M,v)$ is immediate and thus trivial, because $M$ is separable-algebraically maximal. 
    This implies that $d=0,$ so $F|K$ is also trivial, as wanted.
\end{proof}

One can do even better for immediate extensions $M|K,$ dropping the assumption of $M|K$ being separable. 
\begin{lema}
\label{popnotsep}
    Let $p$ be a prime number and let $(M|F,v)$ be an immediate extension, where $(F,v)$ is non-trivial, $(M,v)$ is a non-trivial $\texttt{SAMK}_p$ valued field and $F$ is relatively algebraically closed in $M.$ Then $(F,v)$ is a $\texttt{SAMK}_p$ valued field too.
\end{lema}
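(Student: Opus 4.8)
The plan is to reduce the non-separable case to the separable case already handled in Lemma \ref{pop}. The key observation is that, inside $M$, the field $F$ is relatively algebraically closed, so $M|F$ is a regular extension \emph{provided} $M|F$ is separable; but $M|F$ need not be separable a priori. So first I would factor the problem through the \emph{relative perfect hull} of $F$ inside $M$, namely $F':=F^{1/p^{\infty}}\cap M$, equivalently the smallest perfect-closed-in-$M$ intermediate field. By construction $M|F'$ is separable: any element of $M$ purely inseparable over $F'$ would already lie in $F'$. Moreover $F'$ is still relatively algebraically closed in $M$ (if $a\in M$ is algebraic over $F'$, it is algebraic over $F$, hence in $F$ since $F$ is relatively algebraically closed, hence in $F'$), so $M|F'$ is regular. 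Also $(M|F',v)$ is still immediate, being an intermediate extension of the immediate extension $(M|F,v)$. By Lemma \ref{pop} applied to $(M,v)$ and the regular extension $M|F'$, we conclude that $(F',v)$ is separable-algebraically maximal.

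Next I would descend from $F'$ to $F$. The extension $F'|F$ is purely inseparable and, since each step adjoins $p$-th roots, it is a (possibly infinite) tower of immediate-type extensions once we know $(F,v)$ has $p$-divisible value group and perfect residue field relative to the ambient data — but in fact more is true and cleaner: since $M|F$ is immediate, $vF=vM=vF'$ and $Fv=Mv=F'v$, so $F'|F$ is an immediate purely inseparable extension. Thus I would prove the following descent statement: if $(F',v)$ is separable-algebraically maximal and $F'|F$ is immediate and purely inseparable (indeed $F'=F^{1/p^\infty}\cap M$), then $(F,v)$ is separable-algebraically maximal. Using the characterization from \cite[Theorem 2.4]{fvkp} quoted in the excerpt, it suffices to show $(F,v)$ is henselian and extremal for one-variable separable polynomials. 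Henselianity of $(F,v)$ follows because henselianity is insensitive to purely inseparable (in fact, to immediate) subextensions: a henselian valuation on $F'$ restricts to a henselian one on $F$, as the value group and residue field are unchanged and the henselian property can be read off the value group/residue field data together with the completeness-of-places formulation — alternatively, $F$ is henselian iff $F'$ is, since $F'|F$ is purely inseparable and henselianity passes up and down such extensions.

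For extremality: let $f\in F[X]$ be separable; I must find $x\in F$ with $vf(x)\le vf(y)$ for all $y\in F$. Working in $F'$, extremality there gives $x'\in F'$ with $vf(x')\le vf(y')$ for all $y'\in F'$, in particular for all $y'\in F$. Now $x'\in F'=F^{1/p^\infty}\cap M$, so $x'^{\,p^n}\in F$ for some $n$. I would then exploit that over a valued field with $p$-divisible value group and perfect residue field one can adjust: since $(F,v)$ has $p$-divisible value group (as $vF=vM$ and $M$ is Kaplansky, hence $vM$ is $p$-divisible) and perfect residue field, the map $y\mapsto vf(y)$ and the approximation of $x'$ by elements of $F$ — using that $F$ is $vF$-dense in $F'$ because the extension is immediate — let me replace $x'$ by an element $x\in F$ with $vf(x)=vf(x')$ (choose $x\in F$ with $v(x-x')$ large enough, larger than all the finitely many relevant "critical" valuations $v\big((x-x')\text{-contributions}\big)$ coming from the Taylor expansion $f(x)=\sum \frac{f^{(i)}(x')}{i!}(x-x')^i$; here one uses that $x'$ is a minimizer so no lower-order cancellation can occur, a standard argument). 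Such an $x\in F$ then satisfies $vf(x)\le vf(y)$ for all $y\in F$, giving extremality of $(F,v)$ and completing the proof that $(F,v)\in\texttt{SAMK}_p$.

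The main obstacle I expect is the last descent step — transferring extremality from $F'$ down to $F$ — because it requires care in the approximation/Taylor-expansion argument to guarantee that one can land a minimizer inside $F$ rather than merely in $F^{1/p^\infty}$. Everything else (regularity of $M|F'$, immediacy of the intermediate extensions, invoking Lemma \ref{pop}, and the henselianity transfer) is essentially bookkeeping with the definitions and the cited characterization of separable-algebraic maximality. An alternative route that might avoid the Taylor argument is to show directly that $F^{as}\subseteq F'$ and use a counting/fundamental-equality argument à la Lemma \ref{samkasc} on a putative separable-algebraic immediate extension of $F$, pushing it up to $M$ via $F'$; but I would first try the extremality-descent approach as it is the most direct given the tools assembled above.
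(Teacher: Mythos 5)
Your reduction collapses: since $F$ is relatively algebraically closed in $M$, every element of $F^{1/p^{\infty}}\cap M$ is algebraic over $F$ and hence already lies in $F$, so the field you call $F'$ is simply $F$ itself. The ``factorization'' through the relative perfect hull is therefore vacuous, and the descent machinery (Taylor expansion, extremality transfer) at the end of the proposal has nothing to descend from.

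Moreover, even the step where you invoke Lemma~\ref{pop} rests on a false implication. You assert that, because no element of $M$ is purely inseparable over $F'$, the extension $M|F'$ must be separable. This is not true: a subfield being purely-inseparably closed in an overfield is strictly weaker than separability. Separability of $M|F'$ means that $M$ is linearly disjoint from $(F')^{1/p}$ over $F'$, equivalently that every $p$-independent tuple from $F'$ remains $p$-independent in $M$; it does \emph{not} follow merely from the absence of purely inseparable elements. (A standard source of counterexamples: function fields $M=K(x,y)$ with $y^p=sx^p+t$ over $K=\mathbb{F}_p(s,t)$, where the $p$-dependence collapses only ``at infinity.'') This is exactly why Lemma~\ref{pop} requires the extension to be \emph{regular}, i.e.\ separable and not just relatively algebraically closed, and why the present lemma is stated separately. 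The paper proves it by a genuinely different route that sidesteps separability entirely: it uses Kuhlmann's theory of separably tame fields, namely that $\texttt{SAMK}_p$ valued fields are separably tame, that separable tameness descends to relatively algebraically closed subfields with algebraic residue extension, and that separably tame implies $\texttt{SAMK}_p$ again (Theorem~3.10 and Lemma~3.15 of \cite{fvktf}). If you want an argument in the spirit of your proposal, you would need a descent statement that does not presuppose separability of $M|F$ — which is precisely what the cited Lemma~3.15 provides.
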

\begin{proof}
    Since $(M,v)$ is a $\texttt{SAMK}_p$ valued field, it is in particular separably tame, by \cite[Theorem 3.10]{fvktf}. Since the residue field extension $Mv|Fv$ is algebraic (being trivial), and since $F$ is relatively algebraically closed in $M,$ we get that $(F,v)$ is separably tame by \cite[Lemma 3.15]{fvktf} and therefore separable-algebraically maximal by, again, \cite[Theorem 3.10]{fvktf}. 
\end{proof}

\begin{lema}
\label{vtop}
    Let $(M|F,v)$ be a proper valued field extension, and let $B=B_{\geq\g}(b)=\{a\in M:v(a-b)\geq\g\}$ for some $b\in M$ and some $\g\in vM.$ Then $(M\setminus F)\cap B\neq\emptyset.$  
\end{lema}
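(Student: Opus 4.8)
\emph{Proof proposal.} The plan is to argue by contradiction, reducing the statement to the additive structure of the ball $B$, which is a coset of the subgroup $\m_\g:=\{t\in M:v(t)\geq\g\}$ of $(M,+)$.

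First I would dispose of the case $b\notin F$: since $v(b-b)=\infty\geq\g$ we have $b\in B$, so $b\in (M\setminus F)\cap B$ and there is nothing more to prove. Hence I may assume $b\in F$ from now on, and suppose towards a contradiction that $B\subseteq F$. Writing $B=b+\m_\g$ with $b\in F$, this forces $\m_\g\subseteq F$.

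Now fix $z\in M\setminus F$, which exists because the extension is proper; note $z\neq 0$ since $0\in F$. As $\m_\g\subseteq F$ and $z\notin F$, we must have $v(z)<\g$. Set $\delta:=\g-v(z)$; then $\delta\in vM$ because $vM$ is a group, and $\delta>0$. Since $vM=v(M^{\times})$, choose $c\in M^{\times}$ with $v(c)=\delta$. Then $v(zc)=\g$ and $v(zc^{2})=\g+\delta\geq\g$, so both $zc$ and $zc^{2}$ lie in $\m_\g\subseteq F$. Because $zc\neq 0$, this yields $c=(zc^{2})(zc)^{-1}\in F$ and hence $z=(zc)c^{-1}\in F$, contradicting $z\in M\setminus F$. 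Therefore $B\cap(M\setminus F)\neq\emptyset$.

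The only mildly delicate point — and the one place a naive attempt fails — is the choice of the auxiliary element $c$: one cannot in general move $z$ into the ball by scaling it with an element of $F$, since the value group $vF$ may be far too small to contain $\delta$; instead one scales inside $M$ and then recovers the contradiction from the multiplicative relations between $z$, $zc$ and $zc^{2}$. (Equivalently, one could phrase the argument by noting that $\m_\g$ is a nonzero $\mathcal{O}_v$-submodule of $M$, so $\m_\g\subseteq F$ would force $\mathcal{O}_v\cap M\subseteq F$ and then, inverting, $M\subseteq F$.) Everything else is routine bookkeeping with the value group.
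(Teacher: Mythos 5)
Your proof is correct and is essentially the paper's argument: both assume $B\subseteq F$, observe that $B$ is a coset of $\{t\in M:v(t)\geq\g\}=a\O_v$ for any $a\in M^{\times}$ with $v(a)=\g$, and propagate membership in $F$ multiplicatively until all of $M$ is forced into $F$. The paper's version is precisely the route you sketch in your final parenthetical --- from $B\subseteq F$ it gets $b,b+a\in F$, hence $a\in F$, hence $\O_v=a^{-1}(B-b)\subseteq F$, hence $M=\Frac\O_v\subseteq F$ --- whereas your main text instead hits an arbitrary witness $z\in M\setminus F$ directly with the $zc$, $zc^{2}$ scaling trick; both are fine.
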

\begin{proof}
    Otherwise, $B\subseteq F.$ If $a\in M^{\times}$ is such that $\g=v(a),$ then $B=b+a\O_v\subseteq F$ implies that $b$ and $b+a$ are elements of $F,$ so $a\in F-F\subseteq F,$ and thus $$\O_v=a^{-1}(B-b)\subseteq F^{-1}(F-F)\subseteq F.$$ Finally, $M=\Frac\O_v\subseteq \Frac F=F,$ so $M|F$ is not proper.   
\end{proof}

\subsection{Kaplansky Theory}

We recall some fundamental facts about pseudo-Cauchy sequences, their algebraic or transcendental character, and the role of Kaplansky's hypotheses in our setting. Recall that if $(K,v)$ is a valued field and $\{a_\nu\}_\nu$ is a pseudo-Cauchy sequence, we say that it is \emph{of algebraic type over $K$} if there is some polynomial $P(X)\in K[X]$ such that $\{P(a_\nu)\}_\nu$ pseudo-converges to $0.$ If this is not the case, we call $\{a_\nu\}_\nu$ \emph{of transcendental type over $K$}. If $\{a_\nu\}_\nu$ is of algebraic type over $K,$ then \emph{a minimal polynomial of $\{a_\nu\}_\nu$ over $K$} is a polynomial $P(X)\in K[X]$ of minimal degree such that $\{P(a_\nu)\}_\nu$ pseudo-converges to $0.$ In general, minimal polynomials of pseudo-Cauchy sequences are \emph{not} unique, even by restricting to monic polynomials. 

\begin{fact}[Cf. {\cite[Theorems 2 and 3]{kap}} or {\cite[Theorems 4.9 and 4.10]{vdd}}]
\label{tralgtype}
Let $(K,v)$ be a valued field.
    \begin{enumerate}[label*={\arabic*.}]
        \item Let $\{a_\nu\}_\nu$ be a pseudo-Cauchy sequence in $K$ of transcendental type over $K$. Then $\{a_\nu\}_\nu$ has no pseudo-limit in $K^{alg}$. The valuation $v$ on $K$ extends uniquely to a valuation $v:K(X)\to vK\cup\{\infty\}$ such that
        $$v(f(x))=\text{ eventual value of }v(f(a_\nu))$$
        for each $f\in K[X].$ With this valuation, $(K(X)|K,v)$ is an immediate valued field extension in which $X$ is a pseudo-limit of $\{a_\nu\}_\nu$. Conversely, if $(L|K,v)$ is an extension and $a\in L$ is a pseudo-limit of $\{a_\nu\}_\nu,$ then there is a
        valued field isomorphism $K(X)\to K(a)$ over $K$ that sends $X$ to $a$.
        
        \item Let $\{a_\nu\}_\nu$ be a pseudo-Cauchy sequence in $K$ of algebraic type over $K$, without
        pseudo-limit in $K$, and let $\mu\in K[X]$ be a minimal polynomial of $\{a_\nu\}_\nu$ over $K.$ Then $\mu$ is
        irreducible in $K[X]$ and $\deg\mu\geq 2.$ Let $a$ be a zero of $\mu$ in a field extension of $K$.
        Then $v$ extends uniquely to a valuation $v:K(a)\to vK\cup\{\infty\}$ such that
        $$v(f(a))=\text{ eventual value of }v(f(a_\nu))$$
        for each non-zero polynomial $f\in K[x]$ of degree $\deg f<\deg\mu.$ With this valuation, $(K(a)|K,v)$
        is an immediate valued field extension and $a$ is a pseudo-limit of $\{a_\nu\}_\nu.$
        Conversely, if $\mu(b)=0$ and $b$ is a pseudo-limit of $\{a_\nu\}_\nu$ in a valued field extension of $(K,v)$ then
        there is a valued field isomorphism $K(a)\to K(b)$ over $K$ sending $a$ to $b.$ 
    \end{enumerate}
\end{fact}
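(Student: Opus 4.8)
The plan is to reduce both items to one classical lemma describing how the valuation behaves along the sequence $\{a_\rho\}_\rho$ under polynomial evaluation, and then to read off the four assertions of each item as bookkeeping on top of it. Write the sequence as $\{a_\rho\}_{\rho<\lambda}$ for a limit ordinal $\lambda$, put $\gamma_\rho:=v(a_{\rho+1}-a_\rho)$, and recall that for all large enough $\rho<\sigma<\lambda$ one has $v(a_\sigma-a_\rho)=\gamma_\rho$, that $(\gamma_\rho)_\rho$ is eventually strictly increasing, and that an element $a$ of an extension of $(K,v)$ is a pseudo-limit of $\{a_\rho\}_\rho$ exactly when $v(a-a_\rho)=\gamma_\rho$ for all large $\rho$. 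The lemma I would prove first, by induction on $\deg f$ using the expansion $f(Y+Z)=\sum_{j\ge 0}f_{(j)}(Y)Z^{j}$, is: \emph{for every nonzero $f\in K[X]$ there are $\rho_0<\lambda$ and an integer $i\ge 1$ such that, for all $\rho_0\le\rho<\sigma<\lambda$ and every pseudo-limit $a$ of $\{a_\rho\}_\rho$ in an extension of $(K,v)$, one has}
$$v\big(f(a)-f(a_\rho)\big)\;=\;v\big(f(a_\sigma)-f(a_\rho)\big)\;=\;v\big(f_{(i)}(a_\rho)\big)+i\gamma_\rho;$$
\emph{this quantity is eventually strictly increasing in $\rho$, so that $\{f(a_\rho)\}_\rho$ is eventually a pseudo-Cauchy sequence with pseudo-limit $f(a)$, and $v(f(a_\rho))$ is eventually constant unless $\{f(a_\rho)\}_\rho$ pseudo-converges to $0$, in which case it is eventually strictly increasing.} In the inductive step one applies the hypothesis to each $f_{(j)}$ (which has degree $<\deg f$), so that every $v(f_{(j)}(a_\rho))$ is eventually constant or eventually strictly increasing, and then compares the terms $v(f_{(j)}(a_\rho))+j\gamma_\rho$ for $j\ge 1$ using strict monotonicity of $(\gamma_\rho)_\rho$; showing that this comparison stabilises to a single dominant index $i$, so that the minimum is attained strictly there and the first equality above holds, is the technical heart of the lemma, and is the step I expect to be the main obstacle. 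Everything else is routine once it is in place.

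For Item 1, assume transcendental type, so that no nonzero $f$ has $\{f(a_\rho)\}_\rho$ pseudo-converging to $0$; by the lemma $v(f(a_\rho))$ is then eventually constant for every $f$, and I would \emph{define} $v$ on $K[X]$ as this eventual value. It is a valuation --- multiplicativity and the ultrametric inequality hold evaluation-wise, and it never equals $\infty$ since a nonzero $f$ has only finitely many roots while the $a_\rho$ are eventually distinct --- and extends to $K(X)$. Then $v(X-a_\rho)$ is the eventual value of $v(a_\sigma-a_\rho)=\gamma_\rho$, so $X$ is a pseudo-limit, and $vK(X)=vK$ is immediate from the definition. If $v(f(X))=0$ then $v(f(X)-f(a_\rho))$, which by construction equals the eventual value of $v(f(a_\sigma)-f(a_\rho))$, is eventually strictly increasing in $\rho$ by the lemma, and it is $\ge v(f(X))=0$ by the ultrametric inequality, hence eventually $>0$; so $\res f(X)=\res f(a_\rho)\in Kv$, and a general element of value $0$ is handled by clearing a denominator, giving $K(X)v=Kv$. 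Thus $(K(X)|K,v)$ is immediate. If some $b\in K^{alg}$ were a pseudo-limit, then (transcendental type already excludes a pseudo-limit in $K$) its minimal polynomial $h$ is non-constant, and the lemma with $a=b$ makes $v(h(a_\rho))=v(h(b)-h(a_\rho))$ eventually strictly increasing, i.e. $\{h(a_\rho)\}_\rho$ pseudo-converges to $h(b)=0$, contradicting transcendental type; so there is no pseudo-limit in $K^{alg}$. Finally, if $a$ is a pseudo-limit of $\{a_\rho\}_\rho$ in an extension $(L,v)$, the same argument forces $a$ to be transcendental over $K$, so $X\mapsto a$ is a $K$-algebra isomorphism $K(X)\to K(a)$; it is valued because $v(f(a)-f(a_\rho))$ is eventually strictly increasing while $v(f(a_\rho))$ stabilises to $v(f(X))$, which forces $v(f(a))=v(f(X))$ for all $f$.

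For Item 2, let $\mu$ be a minimal polynomial. A proper factorisation $\mu=gh$ with $1\le\deg g,\deg h<\deg\mu$ would, by minimality, make $v(g(a_\rho))$ and $v(h(a_\rho))$ eventually constant and hence $v(\mu(a_\rho))=v(g(a_\rho))+v(h(a_\rho))$ eventually constant, contradicting that $\{\mu(a_\rho)\}_\rho$ pseudo-converges to $0$; so $\mu$ is irreducible. A monic $\mu$ of degree $1$ would exhibit its root as a pseudo-limit in $K$, which is excluded, so $\deg\mu\ge 2$. For $a$ a zero of $\mu$, every element of $K(a)=K[X]/(\mu)$ is uniquely $f(a)$ with $\deg f<\deg\mu$; such an $f$ does not pseudo-converge to $0$, so $v(f(a_\rho))$ is eventually constant, and I define $v(f(a))$ to be this (finite) value; uniqueness of an extension satisfying the displayed formula is then automatic. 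For multiplicativity, write $fg=q\mu+r$ with $\deg r<\deg\mu$ (so $\deg q<\deg\mu$ too): then $v(q(a_\rho))$ is eventually constant whereas $v(\mu(a_\rho))$ is eventually strictly increasing, so $v(r(a_\rho))=v(f(a_\rho))+v(g(a_\rho))$ eventually; the ultrametric inequality being evaluation-wise, $v$ is a valuation on $K(a)$ extending $v$. Immediateness follows as in Item 1: $vK(a)=vK$ by construction, and if $v(f(a))=0$ then $f-f(a_\rho)$ still has degree $<\deg\mu$, so $v(f(a)-f(a_\rho))$ equals the eventual value of $v(f(a_\sigma)-f(a_\rho))$, which by the lemma is eventually strictly increasing and thus eventually $>v(f(a))=0$; hence $\res f(a)=\res f(a_\rho)\in Kv$. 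Also $v(a-a_\rho)$ is the eventual value of $v(a_\sigma-a_\rho)=\gamma_\rho$, so $a$ is a pseudo-limit. Lastly, if $\mu(b)=0$ and $b$ is a pseudo-limit of $\{a_\rho\}_\rho$ in an extension, then $a\mapsto b$ is a $K$-isomorphism $K(a)\to K(b)$ by irreducibility of $\mu$, and it is valued since for $\deg f<\deg\mu$ the value $v(f(b)-f(a_\rho))$ is eventually strictly increasing while $v(f(a_\rho))$ stabilises to $v(f(a))$.
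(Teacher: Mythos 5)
The paper does not prove this statement; it records it as a Fact citing Kaplansky's Theorems~2 and~3 and van den Dries's Theorems~4.9 and~4.10, so there is no in-paper argument to compare against. Your reconstruction follows the same route as the cited van den Dries notes: reduce both items to the polynomial-evaluation lemma, i.e.\ that for each nonzero $f$ there is an eventually dominant Hasse-derivative index $i\ge 1$ controlling $v(f(a_\sigma)-f(a_\rho))$ and $v(f(a)-f(a_\rho))$, and derive everything else by bookkeeping with eventual values. The bookkeeping you give is correct throughout: irreducibility of $\mu$ and $\deg\mu\ge 2$ via the factorisation argument, the construction of the valuation on $K(X)$ (resp.\ $K(a)$) as the eventual value and multiplicativity via division $fg=q\mu+r$, immediateness by identifying residues and value groups with those of $K$, non-existence of a pseudo-limit in $K^{alg}$ via the minimal polynomial, and uniqueness up to valued-field isomorphism of the extension generated by a pseudo-limit.

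The one genuine gap is the one you flag yourself: the ``technical heart'' of the polynomial-evaluation lemma, namely that once the inductive hypothesis makes each $v(f_{(j)}(a_\rho))$ eventually constant or eventually strictly increasing, the quantities $v(f_{(j)}(a_\rho))+j\gamma_\rho$ for $1\le j\le \deg f$ eventually admit a unique strict minimiser $i$ independent of $\rho$. Everything else in your write-up really is routine granted that lemma, but the lemma itself is not; it is precisely the content of the key lemma in the cited sources (Kaplansky's Lemma~5, the corresponding result in \cite{vdd}), and without a proof of it your text is a correct skeleton rather than a proof. The standard way to close the gap is a pairwise comparison: for $1\le j<j'$, use the inductive information on $f_{(j)}$ and $f_{(j')}$ together with the strict increase of $\gamma_\rho$ to show the difference of the two terms eventually has constant strict sign (in the ``both eventually constant'' case this is immediate since the difference is strictly monotone in $\gamma_\rho$; the mixed and both-increasing cases require a little more care, using the precise form of the inductive hypothesis rather than just the dichotomy constant/increasing), and then conclude from finiteness of the index set. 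Until you supply that step, the proof is incomplete, though the strategy is the right one and matches the cited reference.
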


The following is the most fundamental result about Kaplansky fields that we are going to use. It will play a central role in the proof of our Embedding Lemma \ref{emblem}.

\begin{fact}[Cf. {\cite[Theorem 5]{kap}}]
    \label{kap}
    Let $(F,v)$ be a Kaplansky field. Then any two maximal immediate extensions of $(F,v)$ are isomorphic over $(F,v).$ 
\end{fact}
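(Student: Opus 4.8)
The plan is to prove this by a transfinite exhaustion argument producing a valued-field embedding $\Phi\colon(M_1,v)\to(M_2,v)$ over $F$ between the two maximal immediate extensions; any such embedding is automatically surjective, since $\Phi(M_1)$ is then a maximal valued field and $(M_2,v)$ is immediate over $(\Phi(M_1),v)$ (both being immediate over $F$), so $M_2=\Phi(M_1)$ by maximality. Throughout I would use the classical characterization that a valued field is maximal precisely when it is \emph{pseudo-complete}, i.e.\ every pseudo-Cauchy sequence in it has a pseudo-limit; in particular a maximal valued field has no proper immediate extension.

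By Zorn's Lemma choose a maximal pair $(K,\phi)$ with $F\subseteq K\subseteq M_1$ immediate over $F$ and $\phi\colon(K,v)\to(M_2,v)$ an $F$-embedding with $\phi(K)$ immediate over $F$; it suffices to show $K=M_1$. If not, pick $a\in M_1\setminus K$. Since $(M_1|K,v)$ is immediate and $a\notin K$, the set $\{v(a-c):c\in K\}$ has no maximum (if $v(a-c_0)=\g$ were maximal, pick $d\in K$ with $v(d)=\g$ and $r\in K$ with $\res_v(r)=\res_v((a-c_0)/d)$, using $vM_1=vK$ and $M_1v=Kv$; then $v(a-(c_0+rd))>\g$, a contradiction), so there is a pseudo-Cauchy sequence $\{a_\nu\}_\nu$ from $K$, without pseudo-limit in $K$, pseudo-converging to $a$. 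Transporting through the isomorphism $\phi$, the sequence $\{\phi(a_\nu)\}_\nu$ is pseudo-Cauchy over $\phi(K)$, has no pseudo-limit in $\phi(K)$, and is of the same type. If it is of transcendental type, Fact~\ref{tralgtype}(1) makes $(K(a)|K,v)$ immediate with $a$ transcendental; pseudo-completeness of $M_2$ gives a (necessarily transcendental) pseudo-limit $b\in M_2$ of $\{\phi(a_\nu)\}_\nu$; and the converse part of Fact~\ref{tralgtype}(1) extends $\phi$ to a valued isomorphism $K(a)\to\phi(K)(b)$ over $F$ with both sides immediate over $F$, contradicting maximality of $(K,\phi)$. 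If it is of algebraic type, let $\mu\in K[X]$ be a minimal polynomial of $\{a_\nu\}_\nu$; by Fact~\ref{tralgtype}(2) it is irreducible of degree $d\geq 2$, and the polynomial $\mu^{\phi}\in\phi(K)[X]$ obtained by applying $\phi$ to its coefficients is a minimal polynomial of $\{\phi(a_\nu)\}_\nu$. Granting that $\{a_\nu\}_\nu$ has a pseudo-limit $a'\in M_1$ with $\mu(a')=0$ and $\{\phi(a_\nu)\}_\nu$ has a pseudo-limit $b\in M_2$ with $\mu^{\phi}(b)=0$, Fact~\ref{tralgtype}(2) makes $(K(a')|K,v)$ and $(\phi(K)(b)|\phi(K),v)$ immediate and extends $\phi$ to a valued isomorphism $K(a')\to\phi(K)(b)$ over $F$; since $d\geq 2$ this properly extends $(K,\phi)$ inside $M_1$, the final contradiction. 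The two transported pseudo-limits play symmetric roles, so only one auxiliary lemma is needed.

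What remains — and this is exactly where the Kaplansky hypotheses on $(F,v)$ are consumed — is the Claim that, inside any maximal immediate extension $(M,v)$ of $(K,v)$ (and likewise of $\phi(K)$), an algebraic-type pseudo-Cauchy sequence from the base admits a pseudo-limit which is an honest root of its minimal polynomial $\mu$; note $vM=vK=vF$ is $p$-divisible and $Mv=Kv=Fv$ is $p$-closed, since $K$ and $\phi(K)$ are immediate over the Kaplansky field $F$. I would prove the Claim by a correction-and-approximation argument: start from an arbitrary pseudo-limit $z_0\in M$ (pseudo-completeness), recall that the pseudo-limits of the sequence in $M$ form a ball around $z_0$ of radius its breadth, expand $\mu(z_0+Y)=\sum_{i=0}^{d}\sigma_iY^{i}$ with $\sigma_0=\mu(z_0)$, and show that minimality of $d$ forces the lower terms $\sigma_1,\.,\sigma_d$ to dominate $\sigma_0$ sharply enough that, whenever $\mu(z_0)\neq 0$, some $w$ with $v(w)$ at least the breadth satisfies $v(\mu(z_0+w))>v(\mu(z_0))$. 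Producing such a $w$ amounts to solving the leading part of $\sum_{i\geq 1}\sigma_iw^{i}=-\mu(z_0)$ modulo higher valuation, which, after dividing by a suitable power of a uniformizer and passing to the residue field, becomes an \emph{additive} (linearized) polynomial equation over $Mv$; this is solvable precisely because $vM$ is $p$-divisible (so valuations can be matched) and $Mv$ is $p$-closed, equivalently every additive polynomial over $Mv$ is surjective. Iterating the correction yields a pseudo-Cauchy sequence $z_0,z_1,\.$ in $M$ with $v(\mu(z_n))\to\infty$, and a pseudo-limit of it (pseudo-completeness once more) is a pseudo-limit of the original sequence satisfying $\mu=0$. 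The main obstacle is genuinely this lemma: the bookkeeping that turns minimality of $\deg\mu$ into the precise domination of the $\sigma_i$ over $\sigma_0$, together with the recognition that the residual correction equation is additive, so that $p$-divisibility of the value group and $p$-closedness of the residue field are exactly what makes it solvable; everything else is the standard pseudo-Cauchy machinery already packaged in Fact~\ref{tralgtype}.
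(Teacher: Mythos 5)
The paper does not give a proof of this statement --- it cites \cite[Theorem 5]{kap} and records the result as a Fact --- so there is no internal argument to compare your attempt against. Evaluated on its own terms, your architecture is the classical Kaplansky route and is correct at the structural level: the Zorn maximality setup, surjectivity-for-free via maximality of $\Phi(M_1)$, the transcendental versus algebraic split handled by Fact~\ref{tralgtype}, the observation that $K$ and $\phi(K)$ remain Kaplansky and have $M_1,M_2$ as maximal immediate extensions of themselves, and the correct reduction of the whole theorem to a single lemma, namely that a maximal immediate extension of a Kaplansky field contains, for a given algebraic-type pseudo-Cauchy sequence and a given choice of minimal polynomial $\mu$, a pseudo-limit that is an actual root of $\mu$.

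That lemma, however, is where the theorem really lives, and your proposal only gestures at it. Two concrete gaps. First, you assert that the residual correction equation ``becomes an additive (linearized) polynomial equation over $Mv$,'' which is what licenses the appeal to $p$-closedness. But $\sum_{i\geq 1}\sigma_i w^i$ is an arbitrary polynomial; the claim that, after normalizing by valuation and passing to residues, only the indices $i$ that are powers of $p$ survive at leading order is exactly the nontrivial Newton-polygon analysis of the Taylor coefficients $\sigma_i$ of $\mu(z_0+Y)$ relative to the breadth, driven by minimality of $\deg\mu$, and it is precisely this analysis that consumes Kaplansky's hypothesis. You name the conclusion but do not derive it, so the place where $p$-divisibility and $p$-closedness are supposedly used is never actually reached by argument. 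Second, ``iterating the correction yields $z_0,z_1,\ldots$ with $v(\mu(z_n))\to\infty$'' is too quick: a single correction strictly increases $v(\mu(\cdot))$, but $\omega$ corrections need not be cofinal in $vK$, so the process must be transfinite; at limit stages you must verify both that a pseudo-limit of the corrected sequence is still a pseudo-limit of $\{a_\nu\}_\nu$ and that $v(\mu(\cdot))$ strictly jumps there (this needs a pseudo-continuity estimate with the same $\sigma_i$ bookkeeping), and only then does a cardinality bound give termination. In short, you have correctly located the lemma on which everything turns, but you have not proved it, and the sketch you offer omits the very steps in which the Kaplansky condition does its work.
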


Let $(M|F,v)$ be an extension of valued fields where $(M,v)$ is complete. We will use the notation $(F^{c},v)$ to mean the completion of $(F,v),$ this is, if $a\in M,$ then $a\in F^{c}$ if and only if there is some Cauchy sequence $\{a_\nu\}_\nu$ in $F$ that converges to $a.$  

\begin{fact}[Cf. {\cite[Proposition 3.11]{fvkp}}]
\label{sampc}
    Let $(M|F,v)$ be an immediate extension where $(F,v)$ is a separable-algebraically maximal valued field. Suppose there is some $a\in M$ together with some pseudo-Cauchy sequence $\{a_\nu\}_\nu$ in $F$ of algebraic type over $F$ and without pseudo-limit in $F,$ such that $\{a_\nu\}_\nu$ pseudo-converges to $a.$ Then $a\in F^{c}.$ 
\end{fact}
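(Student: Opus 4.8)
The plan is to replace the minimal polynomial of $\{a_\nu\}$ by a separable perturbation. First I would fix a minimal polynomial $\mu$ of $\{a_\nu\}$ over $F$, of minimal degree $n$; by Fact~\ref{tralgtype}(2), $\mu$ is irreducible, $n\geq 2$, and any root $b\in F^{alg}$ of $\mu$ that is a pseudo-limit of $\{a_\nu\}$ generates an immediate extension $(F(b)|F,v)$. Since $F$ is separable-algebraically maximal it has no non-trivial immediate separable-algebraic extension; as the separable closure of $F$ inside $F(b)$ is an intermediate field of the immediate extension $F(b)|F$ and hence immediate over $F$, it equals $F$, so $F(b)|F$ is purely inseparable. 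Normalizing $\mu$ to be monic, this forces $\mu(X)=X^{p^{j}}-c$ with $j\geq 1$ and $c=b^{p^{j}}\in F\setminus F^{p}$.

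Next I would reduce the statement to showing that $\{a_\nu\}$ is a Cauchy sequence. If it is, it converges to some $a'\in F^{c}$, which is then a pseudo-limit of $\{a_\nu\}$; since $a$ is also a pseudo-limit and the values $v(a_{\nu+1}-a_\nu)$ are cofinal in $vF=vM$, the value $v(a-a')$ exceeds every element of $vF$, whence $a=a'\in F^{c}$. So everything comes down to excluding the possibility that $\gamma_\nu:=v(a_{\nu+1}-a_\nu)$ stays bounded above, say by some $\gamma_{0}\in vF$.

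Assume it does, and put $\delta_\nu:=v(\mu(a_\nu))$. Since $\{\mu(a_\nu)\}$ pseudo-converges to $0$ and $\mu(a_{\nu+1})-\mu(a_\nu)=(a_{\nu+1}-a_\nu)^{p^{j}}$ in characteristic $p$, one gets $\delta_\nu=p^{j}\gamma_\nu$ eventually, so $\{\delta_\nu\}$ is bounded above; moreover $v(a_\nu)$ is eventually bounded. Now choose $\varepsilon\in F^{\times}$ with $v(\varepsilon)$ so large that $v(\varepsilon a_\nu)>\delta_\nu$ for all large $\nu$, and set $P(X):=\mu(X)+\varepsilon X=X^{p^{j}}-c+\varepsilon X$. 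Then $\deg P=n\geq 2$; the derivative of $P$ is the non-zero constant $\varepsilon$, so $P$ is separable; and $v(P(a_\nu))=v(\mu(a_\nu)+\varepsilon a_\nu)=\delta_\nu$ eventually, so $\{P(a_\nu)\}$ pseudo-converges to $0$. Hence $P$ is a separable minimal polynomial of $\{a_\nu\}$ over $F$, so by Fact~\ref{tralgtype}(2) it is irreducible, and a root of $P$ generates a non-trivial immediate separable-algebraic extension of $F$ --- contradicting separable-algebraic maximality. Therefore $\{a_\nu\}$ is Cauchy, and $a\in F^{c}$.

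The main obstacle is finding the move in the third paragraph: realizing that the (necessarily inseparable) minimal polynomial $X^{p^{j}}-c$ can be nudged by a tiny linear term into a \emph{separable} polynomial that is still a minimal polynomial of $\{a_\nu\}$, so that a hypothetical failure of the Cauchy property produces a forbidden separable-algebraic immediate extension. Pinning down the shape of $\mu$ in the first paragraph is exactly what makes this perturbation controllable; the valuation bookkeeping --- boundedness of $\delta_\nu$ and of $v(a_\nu)$ once $\{a_\nu\}$ fails to be Cauchy --- is then routine.
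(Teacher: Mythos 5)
The paper does not prove this statement: it is imported as a Fact from Kuhlmann--P\'al (the cited Proposition 3.11), with only a remark that the added hypothesis ``no pseudo-limit in $F$'' is needed and that ``their proof works just as well.'' So there is no in-paper proof to compare against; judged on its own, your argument is correct and is essentially the known proof of the cited result. The two pivotal moves are both sound: separable-algebraic maximality forces the (monic) minimal polynomial of the sequence to be purely inseparable, hence of the form $X^{p^{j}}-c$; and if the sequence were not Cauchy, the Frobenius identity $\mu(a_{\nu+1})-\mu(a_\nu)=(a_{\nu+1}-a_\nu)^{p^{j}}$ bounds $\delta_\nu=p^{j}\gamma_\nu$ above, so a linear perturbation $\mu+\varepsilon X$ with $v(\varepsilon)$ large is again a minimal polynomial but is separable, producing a forbidden proper separable-algebraic immediate extension via Fact~\ref{tralgtype}(2). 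You also use the extra hypothesis in exactly the right places (to invoke Fact~\ref{tralgtype}(2) and, implicitly, to keep $v(a_\nu)$ eventually constant --- strictly, what you need is $v(a_\nu)$ bounded \emph{below} and $\delta_\nu$ bounded \emph{above}, which is what your bookkeeping delivers). The only cosmetic point is the detour through $a'$: once $\{a_\nu\}$ is Cauchy, $a$ itself is its limit, since $v(a-a_\nu)=\gamma_\nu$ is cofinal in $vF=vM$.
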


In fact, the hypothesis that says that $\{a_\nu\}_\nu$ does not admit any pseudo-limit in $F$ is not included in \cite[Proposition 3.11]{fvkp}, and it turns out to be a necessary hypothesis that we will check in further applications of this fact. However, their proof works just as well under this hypothesis. We are grateful to Franz-Viktor Kuhlmann for pointing this issue out.

\begin{remark}
\label{trt}
    Under the hypotheses of Fact \ref{sampc}, if $a$ is transcendental over $F$ and $\{b_\nu\}_\nu$ is a Cauchy sequence in $F$ that converges to $a,$ then necessarily $\{b_\nu\}_\nu$ is of transcendental type. Otherwise, if it is of algebraic type and $\mu\in F[X]$ is a minimal polynomial thereof, by continuity of polynomials, we would get that $\{\mu(b_\nu)\}_\nu$ is a Cauchy sequence that converges both to $0$ and to $\mu(a),$ yielding that $\mu(a)=0$ by uniqueness of limits of Cauchy sequences. 
\end{remark}

\subsection{Model Theory}

\subsubsection{Existential Fragments}

The following notations are taken from Chapter 3 of \cite{tz}.

\begin{defin}
    Let $\LL$ be a language, $\D$ be a subset of $\LL$-formulas, and let $\AA,\BB$ be two $\LL$-structures. We write $\AA\Rightarrow_\D\BB$ if $\AA\models\f$ implies that $\BB\models\f$ for every sentence $\f\in\D,$ and if $f:A\to B$ is a function, we write $f:\AA\to_\D\BB$ if $\AA\models\f(a)$ implies that $\BB\models\f(f(a))$ for all formulas $\f(x)\in\D$ and all tuples $a$ from $A,$ i.e. if $(\AA,a)_{a\in A}\Rightarrow_{\D}(\BB,f(a))_{a\in A}.$ We write $\AA\equiv_\D\BB$ if $\AA\Rightarrow_{\D}\BB$ and $\BB\Rightarrow_{\D}\AA.$ Finally, if $\AA\subseteq\BB,$ we write $\AA\preceq_\D\BB$ if $(\AA,a)_{a\in A}\equiv_\D(\BB,a)_{a\in A}.$ 
\end{defin}

The following fact is well known. The referenced result only shows that statement 1 is equivalent to statement 3, but statement 2 follows from the proof (without explicit mention of this in the reference).

\begin{fact}[Cf. {\cite[Lemma 3.1.2]{tz}}]
\label{exemb}
Let $\AA$ and $\BB$ be two $\LL$-structures, and let $\D=\E$ be the set of existential $\LL$-formulas. The following statements are equivalent.
\begin{enumerate}[wide, label*={\arabic*.}]
    \item $\AA\Rightarrow_\E\BB.$

    \item For all $|A|^{+}$-saturated elementary extensions $\BB^{*}$ of $\BB,$ there is some function $f:\AA\to_\E\BB^{*}.$

    \item There is some $\BB^{*}\succeq\BB$ and some function $f:\AA\to_\E\BB^{*}.$ 
\end{enumerate}
\end{fact}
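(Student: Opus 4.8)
The plan is to prove the three conditions equivalent by establishing the cycle $(3)\Rightarrow(1)\Rightarrow(2)\Rightarrow(3)$; only the implication $(1)\Rightarrow(2)$ carries real content, and it is there that $|A|^{+}$-saturation is used. For $(3)\Rightarrow(1)$: given $\BB^{*}\succeq\BB$ and $f:\AA\to_\E\BB^{*}$, applying the definition of $\to_\E$ to the empty tuple shows that every existential $\LL$-sentence true in $\AA$ is true in $\BB^{*}$, hence in $\BB$ since $\BB\preceq\BB^{*}$; thus $\AA\Rightarrow_\E\BB$. For $(2)\Rightarrow(3)$: every $\LL$-structure has a $\k$-saturated elementary extension for each cardinal $\k$, so taking $\k=|A|^{+}$ produces an extension $\BB^{*}\succeq\BB$ to which $(2)$ applies, yielding the desired $f$.

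For $(1)\Rightarrow(2)$, fix a $|A|^{+}$-saturated $\BB^{*}\succeq\BB$, enumerate $A=\{a_i:i<\mu\}$ with $\mu$ an ordinal of cardinality $|A|$, and construct $f(a_i)\in B^{*}$ by recursion on $i<\mu$ so as to maintain the hypothesis: for every existential $\f(\ol{x})$ and every tuple $\ol{a}$ drawn from $\{a_j:j<i\}$, $\AA\models\f(\ol{a})$ implies $\BB^{*}\models\f(f(\ol{a}))$. This holds at $i=0$---where it merely asserts that existential $\LL$-sentences true in $\AA$ hold in $\BB^{*}$, which follows from $\AA\Rightarrow_\E\BB$ and $\BB\preceq\BB^{*}$---and it passes to limit stages because existential formulas have only finitely many free variables. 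At a successor stage $i$, consider the set of formulas
$$q(x)=\{\f(x,f(\ol{a})):\f(x,\ol{y})\text{ existential},\ \ol{a}\text{ from }\{a_j:j<i\},\ \AA\models\f(a_i,\ol{a})\},$$
whose parameters lie in the set $\{f(a_j):j<i\}$ of size $\le|A|<|A|^{+}$. A finite conjunction of members of $q(x)$ has the form $\psi(x,f(\ol{a}))$ with $\psi(x,\ol{y})$ existential and $\AA\models\exists x\,\psi(x,\ol{a})$; since $\exists x\,\psi(x,\ol{y})$ is existential and $\ol{a}$ is drawn from $\{a_j:j<i\}$, the recursion hypothesis gives $\BB^{*}\models\exists x\,\psi(x,f(\ol{a}))$, so $q(x)$ is finitely satisfiable in $\BB^{*}$, hence a consistent partial type over $\{f(a_j):j<i\}$. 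By $|A|^{+}$-saturation we may pick $f(a_i)\in B^{*}$ realizing $q(x)$, and the recursion hypothesis is then restored at stage $i+1$. The resulting total map $f:A\to B^{*}$ satisfies $f:\AA\to_\E\BB^{*}$.

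The main obstacle lies in the infinite case of $(1)\Rightarrow(2)$: the collection of all existential $\f(\ol{x})$ with $\AA\models\f(\ol{a})$, viewed as a partial type in the $|A|$-many variables $(x_a)_{a\in A}$, cannot in general be realized in a merely $|A|^{+}$-saturated structure in a single step, so one must proceed by transfinite recursion, committing to one value $f(a_i)$ at a time. The care needed is to keep every parameter set $\{f(a_j):j<i\}$ of size $\le|A|$, so that $|A|^{+}$-saturation applies, and to transfer each relevant existential formula-with-parameters through the recursion hypothesis assembled so far rather than through $\AA\Rightarrow_\E\BB$ alone; when $\AA$ is finite the same recursion simply terminates after finitely many steps.
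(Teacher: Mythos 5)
Your proof is correct. The paper does not prove this statement: it cites it to Tent--Ziegler \cite[Lemma 3.1.2]{tz}, remarking only that the reference establishes the equivalence of (1) and (3), with (2) following from the proof. The standard proof there runs through the method of diagrams: from $\AA\Rightarrow_\E\BB$ one shows by compactness that $\Diag(\AA)\cup\mathrm{ElDiag}(\BB)$ is satisfiable (each finite fragment of $\Diag(\AA)$ has an existential closure true in $\AA$, hence in $\BB$), producing $\BB^*\succeq\BB$ and an embedding $\AA\hookrightarrow\BB^*$; for the saturated version (2) one then observes that the same diagram, read as a type in $|A|$ variables over $\emptyset$, is realized in any $|A|^+$-saturated $\BB^*$ because $\kappa$-saturation yields realization of consistent types in fewer than $\kappa$ variables over parameter sets of size less than $\kappa$. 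Your route is different in emphasis but equivalent in substance: rather than invoke diagrams and the multi-variable form of saturation, you build the embedding element by element via transfinite recursion, realizing at each step a one-type over a set of size at most $|A|$, so that only the one-variable definition of $|A|^+$-saturation is needed; the recursion is in fact the content of the lemma that $\kappa$-saturation handles $<\kappa$-variable types. The one imprecision worth noting is your remark that the $|A|$-variable type ``cannot in general be realized in a merely $|A|^+$-saturated structure in a single step'': it can be, by the lemma just mentioned, since $|A|<|A|^+$; what is true is that the one-variable definition of saturation does not directly apply, which is exactly what your recursion circumvents. The rest of the argument---the base case at $i=0$ from $\AA\Rightarrow_\E\BB$ and $\BB\preceq\BB^*$, the limit stages using finiteness of free variables, and the finite satisfiability of $q(x)$ via closure of existential formulas under conjunction and existential quantification---is sound; one should only note, as you implicitly do, that a tuple from $\{a_j:j\le i\}$ may mention $a_i$ in several places, which is handled by collapsing all such occurrences to the single variable $x$ before forming the member of $q(x)$.
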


For any subset $\D$ of $\LL$-sentences, and any $\LL$-theory $T,$ consider the space $S^{\D}(T)$ of complete $\D$-types. If $\Phi\in\D,$ define the subset $D(\Phi)=\{p\in S^{\D}(T):\Phi\not\in p\}.$ The family of subsets $\{D(\Phi):\Phi\in\D\}$ determines a sub-basis of a topology on $S^{\D}(T).$ 
The following lemma should be well known, as it is an application of the fact that $S^{\D}(T)$ is a spectral topological space, cf. \cite[Theorem 14.2.5]{ss}.

\begin{fact}[Cf. {\cite[Theorem 14.2.17]{ss}}] 
\label{delredeqf}
Let $T$ be an $\LL$-theory and let $\D$ be a set of $\LL$-formulas closed under conjunctions and disjunctions. Let $\Phi,\Psi$ be subsets of $\LL$-formulas. The following are equivalent.
\begin{enumerate}[wide, label*={\arabic*.}]
    \item There is a set $\S\subseteq\D\cup\{\bot\}$ such that $T\cup\Phi\vdash\Sigma$ and $T\cup\Sigma\vdash\Psi,$

    \item If $p,q\in S(T)$ and $p\cap\D\subseteq q,$ then $\Phi\subseteq p$ implies $\Psi\subseteq q.$
\end{enumerate}
If $\Psi$ is finite, $\Sigma$ can be chosen to be a single formula.
\end{fact}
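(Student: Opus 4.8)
The plan is to prove the equivalence by a self-contained compactness argument, without invoking the spectral-space machinery behind the cited reference; throughout, all members of $\Phi$, $\Psi$, $\D$ are sentences and $S(T)$ denotes the set of complete consistent $\LL$-theories extending $T$.

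\emph{From (1) to (2).} This direction uses neither topology nor the closure hypotheses on $\D$. Given $\Sigma$ as in (1): if $\bot\in\Sigma$ then $T\cup\Phi\vdash\bot$, so no $p\in S(T)$ contains $\Phi$ and (2) holds vacuously; otherwise $\Sigma\subseteq\D$. If $p,q\in S(T)$ satisfy $p\cap\D\subseteq q$ and $\Phi\subseteq p$, then $T\cup\Phi\vdash\Sigma$ gives $\Sigma\subseteq p$, hence $\Sigma\subseteq p\cap\D\subseteq q$, and then $T\cup\Sigma\vdash\Psi$ gives $\Psi\subseteq q$.

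\emph{From (2) to (1).} If $T\cup\Phi$ is inconsistent, take $\Sigma=\{\bot\}$. Otherwise I would set $\Sigma:=\{\sigma\in\D:T\cup\Phi\vdash\sigma\}$, so that $T\cup\Phi\vdash\Sigma$ holds by construction and the only thing to check is $T\cup\Sigma\vdash\Psi$. I would argue this contrapositively: if some $\psi\in\Psi$ has $T\cup\Sigma\not\vdash\psi$, choose $q\in S(T)$ with $\Sigma\subseteq q$ and $\psi\notin q$; it then suffices to produce $p\in S(T)$ with $\Phi\subseteq p$ and $p\cap\D\subseteq q$, since (2) would force $\Psi\subseteq q$, contradicting $\psi\notin q$. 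Such a $p$ is any completion of the set $\Xi:=T\cup\Phi\cup\{\neg\theta:\theta\in\D,\ \theta\notin q\}$: indeed $\Phi\subseteq p$ is clear, and if $\theta\in\D\cap p$ then $\theta\in q$, for otherwise $\neg\theta\in\Xi\subseteq p$, a contradiction; hence $p\cap\D\subseteq q$.

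The step I expect to be the main obstacle is the consistency of $\Xi$. Were $\Xi$ inconsistent, compactness would produce $\theta_1,\dots,\theta_n\in\D$, each with $\theta_i\notin q$, such that $T\cup\Phi\vdash\theta_1\vee\cdots\vee\theta_n$; closure of $\D$ under disjunctions then puts $\theta_1\vee\cdots\vee\theta_n$ into $\D$, hence into $\Sigma$, hence into $q$, and completeness of $q$ would force some $\theta_i\in q$ --- the desired contradiction. This is exactly where disjunction-closure enters. Finally, if $\Psi$ is finite, then $T\cup\Sigma\vdash\Psi$ already holds for a finite subset $\Sigma_0\subseteq\Sigma$ by compactness, and closure of $\D$ under conjunctions makes $\bigwedge\Sigma_0$ an element of $\D$ (the degenerate case $\Sigma_0=\emptyset$, i.e.\ $T\vdash\Psi$, being dealt with directly), so $\Sigma=\{\bigwedge\Sigma_0\}$ works. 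A more structural alternative is to recognize (2) as asserting that the $\D$-type of every model of $T\cup\Phi$ specializes into the $\D$-type of every model of $T$ realizing $\Phi$, and then deduce (1) from the lattice/Stone-duality description of $S^{\D}(T)$ as in the reference; but the compactness proof above avoids that formalism.
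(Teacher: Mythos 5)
The paper does not itself prove this Fact; it cites it to the literature (the spectral-space framework of Dickmann--Schwartz--Tressl) and remarks that it is an application of $S^{\D}(T)$ being a spectral space. So there is no in-paper argument to compare against, and your contribution is precisely to make the statement self-contained. Your compactness proof is correct: the (1)$\Rightarrow$(2) direction is the easy bookkeeping it should be; for (2)$\Rightarrow$(1) the canonical candidate $\Sigma=\{\sigma\in\D:T\cup\Phi\vdash\sigma\}$ is the right choice, the reduction to consistency of $\Xi=T\cup\Phi\cup\{\neg\theta:\theta\in\D,\ \theta\notin q\}$ is clean, and the compactness step really does pin down exactly where closure under finite disjunctions is used (to push $\theta_1\vee\cdots\vee\theta_n$ into $\Sigma\subseteq q$, whence completeness of $q$ yields the contradiction). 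Closure under conjunctions is used, and only used, for the final ``single formula'' refinement, which is a nice bit of economy. What you buy over the cited route is that nothing beyond Lindenbaum and compactness is needed, and the reader never has to unpack the specialization order on a spectral space; what you lose is the conceptual packaging --- the observation you tuck at the end, that (2) is a specialization statement in $S^{\D}(T)$, is exactly what makes the reference's formulation transparent and reusable.

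One small point worth tightening: in the finite-$\Psi$ refinement you wave at the degenerate case $\Sigma_0=\emptyset$, i.e.\ $T\vdash\Psi$. As stated, the conclusion ``$\Sigma$ can be chosen to be a single formula'' presupposes there is \emph{some} formula available in $\D\cup\{\bot\}$ that $T\cup\Phi$ entails; this is automatic if one takes the empty conjunction $\top$ to lie in $\D$ (a common convention for ``closed under conjunctions''), or if $\D$ contains any $T\cup\Phi$-consequence at all, but if $\D=\emptyset$ and $T\cup\Phi$ is consistent the singleton claim fails. This is a fringe case that never arises in the paper's applications (where $\D$ always contains the quantifier-free formulas), but a one-line remark that $\top\in\D$ by convention would close it.
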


\begin{cor}
\label{delredeq}
Let $T$ be an $\LL$-theory and let $\D'$ be a set of $\LL$-formulas closed under conjunctions and disjunctions. 
\begin{enumerate}[label={\arabic*.}]
    \item Let $\f(x)$ be an $\LL$-formula. The following statements are equivalent.
        \begin{enumerate}
            \item There is a formula $\psi(x)\in\D'\cup\{\bot\}$ such that $T\vdash\A x(\f(x)\sii\psi(x)),$
        
            \item For any pair of types $p,q\in S_x(T),$ if $p\cap\D'\subseteq q$ and $\f(x)\in p,$ then $\f(x)\in q.$
        \end{enumerate}
    \item Let $\D$ be a set of $\LL$-formulas. The following statements are equivalent.
        \begin{enumerate}
            \item Any formula from $\D$ is equivalent, modulo $T,$ to a formula from $\D'\cup\{\bot\}.$

            \item For any finite tuple of variables $x$ and any pair of types $p,q\in S_x(T),$ if $p\cap\D'\subseteq q,$ then $p\cap\D\subseteq q.$
            
            \item For any pair of models $\MM,\NN$ of $T$ and any pair of tuples $a\in M,b\in N$ of the same length, if $(\MM,a)\Rightarrow_{\D'}(\NN,b),$ then $(\MM,a)\Rightarrow_{\D}(\NN,b).$
        \end{enumerate}
\end{enumerate}
\end{cor}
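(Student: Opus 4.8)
The plan is to derive both parts as essentially formal consequences of Fact~\ref{delredeqf}, using that $\D'$ is closed under conjunctions and disjunctions so that it may be substituted for the set called ``$\D$'' in that fact. Throughout I fix a finite tuple of variables $x$ (large enough to contain the free variables of every formula in play) and work in the type space $S_x(T)$; a point worth keeping in mind is that one and the same piece of information can be phrased either as an inclusion between types in $S_x(T)$ or as a $\D'$-/$\D$-morphism between pointed models of $T$, since every $p\in S_x(T)$ is realized in some model of $T$ and since $(\MM,a)\Rightarrow_{\D}(\NN,b)$ depends only on $\tp^{\MM}(a)$ and $\tp^{\NN}(b)$.

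For Part 1, I would apply Fact~\ref{delredeqf} with ``$\D$''$:=\D'$, $\Phi:=\{\f(x)\}$ and $\Psi:=\{\f(x)\}$. Statement~(2) of Fact~\ref{delredeqf} then reads verbatim as condition~(b): whenever $p,q\in S_x(T)$ satisfy $p\cap\D'\subseteq q$ we have $\f(x)\in p\Rightarrow\f(x)\in q$. Statement~(1) produces a set $\Sigma\subseteq\D'\cup\{\bot\}$ with $T\cup\{\f\}\vdash\Sigma$ and $T\cup\Sigma\vdash\{\f\}$; since $\Psi$ is a single formula, the final clause of Fact~\ref{delredeqf} lets us take $\Sigma=\{\psi(x)\}$ for one $\psi(x)\in\D'\cup\{\bot\}$, and the two entailments say exactly $T\vdash\A x(\f(x)\to\psi(x))$ and $T\vdash\A x(\psi(x)\to\f(x))$, i.e.\ condition~(a). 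So Part~1 is immediate from Fact~\ref{delredeqf}.

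For Part 2, I would run the cycle (a)$\Rightarrow$(b)$\Rightarrow$(c)$\Rightarrow$(a). For (a)$\Rightarrow$(b): given $p,q\in S_x(T)$ with $p\cap\D'\subseteq q$ and $\psi\in p\cap\D$, pick by~(a) a $T$-equivalent $\psi'\in\D'\cup\{\bot\}$; a complete type over $T$ is closed under $T$-consequence, so $\psi'\in p$, and since $p$ is consistent with $T$ we must have $\psi'\neq\bot$, so $\psi'\in\D'$, hence $\psi'\in q$ and therefore $\psi\in q$; thus $p\cap\D\subseteq q$. For (b)$\Rightarrow$(c): given $\MM,\NN\models T$ and equal-length tuples $a\in M$, $b\in N$ with $(\MM,a)\Rightarrow_{\D'}(\NN,b)$, put $p:=\tp^{\MM}(a)$ and $q:=\tp^{\NN}(b)$ in $S_x(T)$ with $|x|=|a|$; the hypothesis says precisely $p\cap\D'\subseteq q$, (b) upgrades this to $p\cap\D\subseteq q$, and reading this back off the models gives $(\MM,a)\Rightarrow_{\D}(\NN,b)$. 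For (c)$\Rightarrow$(a): invoking Part~1 in the direction (b)$\Rightarrow$(a) with $\f:=\psi$, it suffices to verify condition~(b) of Part~1 for each $\psi\in\D$; so take $p,q\in S_x(T)$ with $p\cap\D'\subseteq q$ and $\psi\in p$, realize $p$ by a tuple $a$ in some $\MM\models T$ and $q$ by a tuple $b$ in some $\NN\models T$, note that $p\cap\D'\subseteq q$ translates into $(\MM,a)\Rightarrow_{\D'}(\NN,b)$, apply~(c) to obtain $(\MM,a)\Rightarrow_{\D}(\NN,b)$, and conclude from $\psi\in p$ that $\NN\models\psi(b)$, i.e.\ $\psi\in q$.

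The only genuinely delicate point is the bookkeeping with free variables and the reading of ``$\vdash$'' between sets of open formulas: one has to invoke Fact~\ref{delredeqf} in a single fixed finite variable context $x$ containing the free variables of every formula involved, with ``$T\cup\Phi\vdash\Sigma$'' understood as entailment after adjoining the variables of $x$ as fresh constants, and likewise with the convention that $\D'\cup\{\bot\}$ contains a refuting formula and (for the case $T\vdash\f$) a trivially valid one. Beyond that, everything is a routine unwinding of the definitions of $\Rightarrow_{\D}$, of complete types over $T$ (closed under $T$-consequence, consistent with $T$), and of realizability of types in models of $T$; I do not anticipate a substantive obstacle.
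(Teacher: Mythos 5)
Your proposal is correct and follows essentially the same route as the paper: Part~1 is read off from Fact~\ref{delredeqf} by taking $\Phi=\Psi=\{\f\}$, and Part~2 reduces to the dictionary between inclusions $p\cap\D'\subseteq q$ of complete types and $\D'$-morphisms of pointed models, obtained by realizing types. The only cosmetic difference is the arrangement of the implications in Part~2 (the paper proves (a)$\Leftrightarrow$(b) via Part~1, notes (a)$\Rightarrow$(c), and closes with (c)$\Rightarrow$(b), whereas you run the cycle (a)$\Rightarrow$(b)$\Rightarrow$(c)$\Rightarrow$(a)), but the ingredients are identical.
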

\begin{proof}
Statement 1 follows from Fact \ref{delredeqf}. The equivalence between (a) and (b) of statement 2 follows from statement 1. It is clear that (a) implies (c), so we just have to prove that (c) implies (b). To this end, let $x$ be a finite tuple of variables, and let $p,q\in S_x(T)$ be two types. Then there are two models $\MM,\NN$ of $T$ and two $|x|$-tuples $a,b$ of $M$ and $N$ respectively such that $p=\tp^{\MM}(a)$ and $q=\tp^{\NN}(b).$ If $p\cap\D'\subseteq q,$ then $(\MM,a)\Rightarrow_{\D'}(\NN,b),$ which by hypothesis implies that $(\MM,a)\Rightarrow_{\D}(\NN,b),$ meaning that $p\cap\D\subseteq q,$ as wanted. 
\end{proof}

\subsubsection{Languages and Theories of \texttt{SAMK} Valued Fields}

For the rest of this document, we will work with three sorted languages for valued fields, with sorts $\KK$ for the home field, $\kk$ for the residue field and $\GG$ for the value group. If $\LL$ is any such language, we write $\LL(\KK)$ (resp. $\LL(\kk),$ $\LL(\GG)$) to mean the language associated to the sort $\KK$ (resp. $\kk$ and $\GG$), and we always assume that any such $\LL$ contains a function symbol $\underline{v}:\KK\to\GG$ to be interpreted as a valuation, a function symbol $\underline{\res}:\KK\to\kk$ to be interpreted as the associated residue map, and we also assume that $\LL(\KK)$ and $\LL(\kk)$ contain the language of rings $\LL_{ring}$ and $\LL(\GG)$ contains the language of ordered groups $\LL_{og}$ together with a symbol $\infty$ for a point at infinity. In order to stress the dependence on the auxiliary languages $\LL(\kk)$ and $\LL(\GG),$ we will even write $\LL=\LL(\LL_\kk,\,\LL_\GG)$ to mean that $\LL(\kk)=\LL_\kk$ and $\LL(\GG)=\LL_\GG$ for a given couple of languages $\LL_\kk$ and $\LL_\GG$ containing $\LL_{ring}$ and $\LL_{og}\cup\{\infty\}$ respectively. 
Let $\LL_{3s}=\LL_{3s}(\LL_\kk,\,\LL_\GG)$ be the three sorted language for valued fields, \emph{possibly expanded in $\LL_{3s}(\GG)$ and $\LL_{3s}(\kk),$} i.e. where $\LL_\kk$ and $\LL_\GG$ are allowed to have more symbols than $\LL_{ring}$ and $\LL_{og}\cup\{\infty\}$ respectively. Note that the class of $\texttt{SAMK}$ equi-characteristic valued fields is elementary in any such $\LL_{3s},$ axiomatized by the following axioms:
\begin{itemize}[wide]
    \item The axioms of henselian valued fields,

    \item The axioms of extremality for separable polynomials in one variable,

    \item Equi-characteristic: $$\left\{\sum_{i=1}^{p}1_{\KK}=0\sii\sum_{i=1}^{p}1_{\kk}=0:p\text{ prime}\right\},$$
    where $1_\KK$ is the unit symbol of the home field sort $\KK$ and $1_\kk$ is the unit of the residue field sort $\kk,$
    \item Kaplansky (uniform across all positive characteristics): $$\left\{\sum_{i=1}^{p}1_{\kk}=0\to\A x\E y\left(v(x)=pv(y)\right):p\text{ prime}\right\},$$ and $$\left\{\sum_{i=1}^{p}1_{\kk}=0\to\A x_0,\.,x_n,y\,\E z\left(\sum_{i=0}^{n}x_iz^{p^{i}}=y\right):p\text{ prime}, n<\omega\right\},$$ where all the mentioned variables are of sort $\kk.$
\end{itemize}

Define $\LL_{\ac}$ as the expansion of $\LL_{3s}$ by a function symbol for an angular component $\underline{\ac}:\KK\to\kk$, and let $\LL$ be the expansion of $\LL_{\ac}$ by countably many function symbols $\{\underline{\l}_{n,m}(x,y):|x|=n\geq1,m\in\Mon(n)\}$ where each $\underline{\l}_{n,m}$ is of sort $\KK^{n+1}\to\KK.$ An $\LL_{3s}$-structure whose underlying universe is an $\ac$-valued field $(M,v,\ac_v)$ of characteristic $p>0$ is then an $\LL$-structure $\MM$ via the following interpretation:  
$$\underline{\l}_{n,m}^{\MM}(a,b)=\begin{cases}
    \l_{m}^{a}(b)&\text{ if }a\text{ is a }p\text{-independent }n\text{-tuple in }M\text{ and }b\in M^{p}(a),\\
    0&\text{ otherwise.}
\end{cases}$$
For $e\in\N\cup\{\infty\},$ let $\texttt{SAMK}_{e}^{\l,\ac}$ be the $\LL$-theory given by the following axioms:
\begin{itemize}[wide]
    \item The $\LL_{\ac}$-theory of non-trivial $\texttt{SAMK}$ equi-characteristic valued fields admitting an angular component,

    \item If $e\in\N,$ the $\LL_{ring}$-axioms fixing the Ershov degree of the home sort equal to $e:$ If $x_1,\.,x_n$ are variables from the sort $\KK,$ let $\Mon(x_1,\.,x_n)=\{\prod_{i=1}^{n}x_i^{\a_i}:0\leq\a_i<p\text{ for all }i\in\{1,\.,n\}\}$ be the set of monomials obtained with these variables. Then the axioms (one for each prime $p$) say that if $\sum_{i=1}^{p}1_\KK=0,$ then there are $e$ elements $x_1,\.,x_e$ whose set of monomials $\Mon(x_1,\.,x_e)$ is linearly independent over $\KK^{p}$ and generates $\KK$ over $\KK^{p}.$

    \item If $e=\infty,$ the axioms fixing the Ershov degree of the home sort equal to $e:$ with the same notation as above, the axioms (one for each $e\in\N$ and each prime $p$) say that if $\sum_{i=1}^{p}1_\KK=0,$ then there are $e$ elements $x_1,\.,x_e$ whose set of monomials $\Mon(x_1,\.,x_e)$ is linearly independent over $\KK^{p}.$  

    \item The axioms describing the $\l$-maps: for any $n<\omega$ and any prime $p,$ if $\sum_{i=1}^{p}1_\KK=0,$ then $$t=\sum_{m\in\Mon(n)}(\l_{m}^{x}(t))^{p}\cdot x^{m}$$ for all tuples $x=(x_1,\.,x_n)$ whose monomial set $\Mon(x_1,\.,x_n)$ is linearly independent over $\KK^{p}$ and for all $t$ in the $\KK^{p}$-span of $\Mon(x_1,\.,x_n),$ cf. Remark \ref{defl}.
\end{itemize}

In our setting, if $\LL=\LL(\LL_\kk,\,\LL_\GG),$ an $\LL$-structure $\AA$ is a tuple $$\left(A,\kk(A),\GG(A),v,\res_v,\ac_v,\l_{n,m}^{A}:n\in\N^{>0},m\in\Mon(n)\right)$$ where
\begin{itemize}[wide]
    \item $A$ and $\kk(A)$ are rings and $\GG(A)$ is an ordered group, where $\kk(A)$ and $\GG(A)$ may admit further structure coded by $\LL_\kk$ and $\LL_\GG$ respectively, and

    \item $\begin{cases}
     v:A\to\GG(A)\cup\{\infty\},\\
     \res_v,\ac_v:A\to\kk(A),\\
     \l_{n,m}^{A}:A^{n+1}\to A
    \end{cases}$ are functions. 
\end{itemize}
If $\AA$ and $\BB$ are $\LL$-structures as above (with symbols $v$ and $w$ respectively), then an $\LL$-embedding $\Sigma:\AA\to\BB$ is a triple $(\iota,\sigma,\rho)$ where 
\begin{itemize}[wide]
    \item $\iota:A\to B$ is a ring embedding,

    \item $\sigma:\kk(A)\to\kk(B)$ is an $\LL_\kk\,$-embedding and $\rho:\GG(A)\cup\{\infty\}\to\GG(B)\cup\{\infty\}$ is an $\LL_\GG\,$-embedding. Note that $\LL_\kk\supseteq \LL_{ring}$ and $\LL_\GG\supseteq\LL_{og}\cup\{\infty\},$ so, in particular, $\sigma$ is a ring embedding and $\rho$ is a strictly increasing group homomorphism respecting the laws for $\infty;$ and

    \item the squares 
    \begin{center}
        \begin{tikzcd}
A \arrow[r, "v"] \arrow[d, "\iota"'] & \GG(A)\cup\{\infty\} \arrow[d, "\rho"] & A \arrow[d, "\iota"'] \arrow[rr, "{\res_v,\,\,\ac_v}"] &  & \kk(A) \arrow[d, "\sigma"] & A^{n+1} \arrow[d, "\iota"'] \arrow[r, "{\l_{n,m}^{A}}"] & A \arrow[d, "\iota"] \\
B \arrow[r, "w"]                     & \GG(B)\cup\{\infty\}                   & B \arrow[rr, "{\res_w,\,\,\ac_w}"]                     &  & \kk(B)                     & B^{n+1} \arrow[r, "{\l_{n,m}^{B}}"]                     & B                   
\end{tikzcd} 
    \end{center}
    commute.   
\end{itemize}

In particular, if $\MM$ is an $\LL$-structure whose $\KK$-sort universe $M$ is a field, and $F$ is a $\l$-closed subfield of $M,$ then the structure generated by $F,$ denoted by $\langle F\rangle,$ corresponds to the tuple $$\left(F,Fv,vF,v|_F,\res_v|_{F},\ac_v|_{F},\l_{n,m}^{M}|_{F}:n\in\N^{>0},m\in\Mon(n)\right),$$ i.e. $\GG(F)=vF$ and $\kk(F)=Fv.$ In this case, $(F,v|_F,\ac_v|_{F})$ is a well-defined $\ac$-valued field.

The following Lemma allows us to construct models of $\texttt{SAMK}_e^{\l,\ac}$ for preferred Ershov degrees $e$, $p$-closed residue fields and $p$-divisible value groups. 

\begin{lema}
\label{cons}
    Let $\LL=\LL(\LL_\kk,\,\LL_\GG),$ and consider the $\LL$-theory $\mathcal{T}(e,T_{\kk},T_{\GG})=\texttt{SAMK}_{e}^{\l,\ac}\cup T_{\kk}\cup T_{\GG},$ where 
    \begin{itemize}[wide]
        \item $e\in\N\cup\{\infty\},$
    
        \item $T_{\kk}$ is a consistent $\LL_\kk\,$-theory of fields such that $$T_{\kk}\vdash\left\{\sum_{i=1}^{p}1_{\kk}=0\to\A x_0,\.,x_n,y\,\E z\left(\sum_{i=0}^{n}x_iz^{p^{i}}=y\right):p\text{ prime}, n<\omega\right\},$$

        \item $T_{\GG}$ is a consistent $\LL_\GG\,$-theory of non-trivial ordered abelian groups,
    \end{itemize}
    and allow these theories to admit extra axioms for the symbols of $\LL_\kk\setminus\LL_{ring}$ and $\LL_\GG\setminus\LL_{og}\cup\{\infty\}$ respectively. Let $P_1$ be the set of all possible characteristics of models of $T_{\kk},$ and let $P_2$ be the set of all prime numbers $p$ such that $T_{\GG}$ admits a $p$-divisible model. Then for any Ershov degree $e\in\N\cup\{\infty\},$ the theory $\mathcal{T}(e,T_{\kk},T_{\GG})$ is consistent if and only if $P_1\cap P_2\neq\emptyset.$
\end{lema}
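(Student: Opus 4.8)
The plan is to obtain the "only if" direction directly from the axioms of $\texttt{SAMK}_e^{\l,\ac}$, and the "if" direction by constructing, for each $p\in P_1\cap P_2$, a model of $\mathcal{T}(e,T_\kk,T_\GG)$ of residue characteristic $p$. For "only if", let $\MM\models\mathcal{T}(e,T_\kk,T_\GG)$ and let $p$ be its characteristic. Since $\MM\models\texttt{SAMK}_e^{\l,\ac}$, its home field is a non-trivial equi-characteristic $\texttt{SAMK}$ $\ac$-valued field, so $\kk(M)=Mv$ has characteristic $p$ as well; as $\kk(M)\models T_\kk$, this gives $p\in P_1$. Because $\kk(M)$ has characteristic $p$, the Kaplansky axiom $\sum_{i=1}^{p}1_\kk=0\to\A x\,\E y\,(v(x)=pv(y))$ forces $\GG(M)=vM$ to be $p$-divisible, and $\GG(M)\models T_\GG$; hence $p\in P_2$, and $P_1\cap P_2\neq\emptyset$.

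For "if", fix $p\in P_1\cap P_2$, pick $k\models T_\kk$ of characteristic $p$ (then $p$-closed, hence perfect, by the hypothesis on $T_\kk$) and a non-trivial $p$-divisible $\Gamma\models T_\GG$. I would start from the twisted rational function field $E:=\Frac(k[\Gamma])$ with the monomial valuation $v\bigl(\sum_\gamma a_\gamma[\gamma]\bigr)=\min\{\gamma:a_\gamma\neq0\}$; then $Ev=k$, $vE=\Gamma$, $E$ is a Kaplansky valued field, the cross-section $\gamma\mapsto[\gamma]$ is an angular component on $E$, and $E$ is perfect (since $k$ is perfect and $\Gamma$ is $p$-divisible, $E^p=E$). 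Set $\kappa:=e$ if $e\in\N$ and $\kappa:=\omega$ otherwise. Using Fact \ref{tralgtype}(1) I would iteratively adjoin pseudo-limits of transcendental-type pseudo-Cauchy sequences to build an immediate, purely transcendental extension $E':=E(s_\alpha:\alpha<\kappa)\supseteq E$; then $E'v=k$, $vE'=\Gamma$, and, as $E$ is perfect, $(s_\alpha)_{\alpha<\kappa}$ is a $p$-basis of $E'$, so $\imdeg(E')=\kappa$.

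Finally, let $K$ be a separable-algebraic maximalization of $E'$, say the union of all separable-algebraic immediate extensions of $E'$ inside a fixed maximal immediate extension. Then $Kv=k\models T_\kk$, $vK=\Gamma\models T_\GG$, $K$ is Kaplansky, and $K$ is separable-algebraically maximal by construction, hence henselian and extremal for separable one-variable polynomials by \cite[Theorem 2.4]{fvkp}. Since $K|E'$ is separable algebraic, Lemma \ref{ersgoesup} gives that a $p$-basis of $E'$ stays a $p$-basis of $K$, so $K$ has imperfection degree $\kappa$, i.e.\ Ershov degree $e$; moreover the angular component on $E$ extends uniquely along the (unramified) immediate extensions $E\subseteq E'\subseteq K$, as in the corollary following Lemma \ref{uac}. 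Interpreting the parameterized $\l$-functions on $K$ as in the definition of the $\LL$-structure and equipping $k,\Gamma$ with their $T_\kk,T_\GG$-structures makes $K$ a model of $\texttt{SAMK}_e^{\l,\ac}\cup T_\kk\cup T_\GG=\mathcal{T}(e,T_\kk,T_\GG)$, which is therefore consistent.

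The main obstacle will be getting the Ershov degree exactly right in the "if" direction: one must not pass to a full maximal immediate extension of $E'$ (which, being maximally complete with perfect residue field and $p$-divisible value group, is perfect of imperfection degree $0$), but only to a separable-algebraic maximalization, whose maximalizing extension is separable — which is precisely what lets Lemma \ref{ersgoesup} preserve the $p$-basis. A related point to justify is the existence, at each stage, of a transcendental-type pseudo-Cauchy sequence without pseudo-limit; this holds because the relevant fields are far from maximally complete, and one may alternatively take the $s_\alpha$ directly as algebraically independent elements of a maximal immediate extension of $E$ (which has infinite transcendence degree over $E$).
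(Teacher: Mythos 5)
Your proposal follows the same route as the paper's proof: your $\Frac(k[\Gamma])$ with the monomial valuation is exactly the paper's $F=k(t^\gamma:\gamma\in\Gamma)\subseteq\hs{k}{\G}$; the paper likewise obtains the transcendental $s_n$ by a cardinality count inside $\hs{k}{\G}$; and your worry about preserving the Ershov degree (not passing to a full maximal immediate extension, and instead using Lemma \ref{ersgoesup}) is precisely what the paper's passage to $F(s_n)^{sep}_{n<e}\cap K$ achieves. There is, however, one real gap: the claim that $K$ ``is separable-algebraically maximal by construction.'' Taking $K$ to be the union of all separable-algebraic immediate extensions of $E'$ \emph{inside a fixed maximal immediate extension $M$} only makes $K$ maximal \emph{among such subextensions of $M$}; it does not by itself rule out a proper separable-algebraic immediate extension $L$ of $K$ living outside $M$.

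Closing that gap requires Kaplansky's Uniqueness Theorem (Fact \ref{kap}), which the paper invokes at exactly this point and which your proposal never uses. Concretely: $K$ is Kaplansky, being a subextension of the immediate extension $M|E$ of Kaplansky fields, and $M$ is then also a maximal immediate extension of $K$. If $L|K$ were a proper separable-algebraic immediate extension, embed $L$ into some maximal immediate extension $M'$ of $K$; by Fact \ref{kap} there is an isomorphism $M'\to M$ over $K$, and its image of $L$ is a separable-algebraic immediate extension of $E'$ inside $M$ strictly containing $K$, contradicting your definition of $K$. You should make this appeal explicit, as the paper does.
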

\begin{proof}
     The direct implication is clear by the axioms of Kaplansky fields. For the converse implication, let $k\models T_{\kk}$ and $\G\models T_{\GG}$ be countable models corresponding to a common $p\in P_1\cap P_2.$ If $F:=k(t^{\g}:\g\in\G),$ then $\left(\hs{k}{\G}\,|F,v_t\right)$ is an immediate extension of Kaplansky fields, where $v_t$ is the usual $t$-adic valuation for Hahn fields, and $F$ is perfect. If $\ac_t$ is the usual angular component map of $\hs{k}{\G},$ then its restriction to $F$ is still surjective, i.e. $(F,v_t|_{F},\ac_t|_{F})$ is a bona fide $\ac$-valued field.
     Since $|F|=\aleph_0<2^{\aleph_0}=|\hs{k}{\G}|,$ there is a sequence $(s_n)_{n<\omega}$ of elements of $\hs{k}{\G}$ which is algebraically independent over $F.$ Then $F(s_n)_{n<e}$ has Ershov degree $e$ if $e\in\N,$ or $\infty$ if $e=\omega,$ by \cite[Lemma 2.7.2]{fj}. Since $F(s_n)_{n<e}|F$ is a subextension of the immediate extension $\hs{k}{\G}\,|F,$ it is in particular Kaplansky. Therefore, if $v_0$ is the restriction of $v_t$ to $F(s_n)_{n<e},$ and if $v$ is any valuation on $F(s_n)_{n<e}^{alg}$ extending $v_0,$ then by Kaplansky's Uniqueness Theorem \cite[Theorem 5]{kap}, $F(s_n)_{n<e}$ admits a unique maximal immediate algebraic extension $(K|F(s_n)_{n<e},v)$ up to isomorphism over $F(s_n)_{n<e},$ making $(F(s_n)_{n<e}^{sep}\cap K,v)$ a separable-algebraically maximal Kaplansky field of imperfection degree equal to $e.$ Indeed, if $L$ is a separable-algebraic immediate extension of $K\cap F(s_n)_{n<e}^{sep},$ then $L$ can be embedded into some maximal immediate algebraic extension $K'$ which would still be contained in $F(s_n)_{n<e}^{alg},$ so there is an embedding $K'\hookrightarrow K$ over $F(s_n)_{n<e}$ by uniqueness of $K,$ and $L\subseteq F(s_n)_{n<e}^{sep}$ by assumption, implying that $L=F(s_n)_{n<e}^{sep}\cap K$. If $\ac_0$ is the restriction of $\ac_t$ to $F(s_n)_{n<e},$ and since the extension $(F(s_n)_{n<e}^{sep}\cap K|F(s_n)_{n<e}^{sep},v)$ is in particular unramified, by Lemma \ref{uac} we can define a (uniquely determined) angular component $\ac$ on $F(s_n)_{n<e}^{sep}\cap K$ extending $\ac_0.$ In other words, the structure $(F(s_n)_{n<e}^{sep}\cap K,v,\ac)$ is a model of $\mathcal{T}(e,T_\kk,T_{\GG})$ when it is endowed with its own parameterized $\l$-maps.     
\end{proof}

\begin{center}
\begin{tikzcd}
                                              &                                        & F(s_n)_{n<e}^{alg}                                                                   &                       &                                \\
                                              & F(s_n)_{n<e}^{sep} \arrow[ru, no head] &                                                                                      & K \arrow[lu, no head] &                                \\
{\texttt{SAMK}_e^{\l,\ac}} \arrow[rr, dotted] &                                        & F(s_n)_{n<e}^{sep}\cap K \arrow[ru, no head] \arrow[lu, no head] \arrow[rd, no head] &                       & \hs{k}{\G} \arrow[ld, no head] \\
                                              &                                        &                                                                                      & F(s_n)_{n<e}          &                                \\
                                              &                                        &                                                                                      & F \arrow[u, no head]  &                               
\end{tikzcd}
\end{center}

\begin{cor}
\label{disj}
Let $p$ be a prime number, let $T_{\kk}$ be a consistent theory of $p$-closed fields of characteristic $p$, let $T_{\GG}$ be a theory of $p$-divisible ordered abelian groups and let $T=\mathcal{T}(e,T_{\kk},T_{\GG})$ be as in Lemma \ref{cons}, for a fix Ershov degree $e\in\N\cup\{\infty\}.$ Let $\th$ and $\psi$ be $\LL_\kk\,$- and $\LL_\GG\,$-sentences respectively.
\begin{enumerate}[wide, label*={\arabic*.}]
    \item $T\vdash\th\vee\psi$ if and only if $T_{\kk}\vdash\th$ or $T_{\GG}\vdash\psi.$

    \item If $T_\kk$ and $T_\GG$ are complete and $\th$ and $\psi$ are existential, then $T\vdash\th\vee\psi$ if and only if $(T_{\kk})_\E\vdash\th$ or $(T_{\GG})_\E\vdash\psi.$
\end{enumerate}
\end{cor}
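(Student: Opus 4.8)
The plan is to deduce both statements from the construction behind Lemma~\ref{cons}. Statement~1 is the substantial one, and its right-to-left direction is immediate: since $T=\texttt{SAMK}_e^{\l,\ac}\cup T_\kk\cup T_\GG$ contains $T_\kk$ and $T_\GG$ once $\LL_\kk$- and $\LL_\GG$-sentences are read inside the three-sorted $\LL$ by relativization to the residue and value-group sorts, $T_\kk\vdash\th$ gives $T\vdash\th$ and hence $T\vdash\th\vee\psi$, and symmetrically $T_\GG\vdash\psi$ gives $T\vdash\th\vee\psi$. For the converse I would argue the contrapositive: assuming $T_\kk\not\vdash\th$ and $T_\GG\not\vdash\psi$, I would produce a model of $T$ satisfying $\neg\th\wedge\neg\psi$.

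To do so, fix a field $k\models T_\kk\cup\{\neg\th\}$ and an ordered abelian group $\G\models T_\GG\cup\{\neg\psi\}$, and apply Lemma~\ref{cons} with the \emph{complete} theories $\Th(k)$ and $\Th(\G)$ in place of $T_\kk$ and $T_\GG$, keeping the same Ershov degree $e$. The hypotheses of Lemma~\ref{cons} are satisfied: $\Th(k)$ is a consistent theory of fields, and since $k$ is a $p$-closed field of characteristic $p$ it satisfies the $p$-closedness schema appearing in Lemma~\ref{cons}, hence so does $\Th(k)$; $\Th(\G)$ is a consistent theory of non-trivial ordered abelian groups (non-triviality being inherited from $T_\GG$, for which the hypotheses of Lemma~\ref{cons} are assumed to hold); and if $P_1,P_2$ denote the sets attached by Lemma~\ref{cons} to $\Th(k)$ and $\Th(\G)$, then $P_1=\{p\}$ as $\Th(k)$ is complete of characteristic $p$, while $p\in P_2$ as $\G$ is $p$-divisible, so $P_1\cap P_2\ni p$. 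Lemma~\ref{cons} then yields a model $\MM\models\texttt{SAMK}_e^{\l,\ac}\cup\Th(k)\cup\Th(\G)$: its residue field satisfies $\Th(k)$, hence both $T_\kk$ and $\neg\th$, and its value group satisfies $\Th(\G)$, hence both $T_\GG$ and $\neg\psi$. Therefore $\MM\models T$ while $\MM\models\neg(\th\vee\psi)$, so $T\not\vdash\th\vee\psi$.

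Statement~2 I would then read off from statement~1 using the elementary fact that, for an existential sentence $\chi$ and any theory $S$, one has $S\vdash\chi$ iff $S_\E\vdash\chi$ (forward because then $\chi\in S_\E$, backward because each sentence of $S_\E$ is a consequence of $S$). Applying this with $S=T_\kk$, $\chi=\th$ and with $S=T_\GG$, $\chi=\psi$, statement~1 becomes exactly: $T\vdash\th\vee\psi$ iff $(T_\kk)_\E\vdash\th$ or $(T_\GG)_\E\vdash\psi$. The completeness of $T_\kk$ and $T_\GG$ is what makes $(T_\kk)_\E$ and $(T_\GG)_\E$ unambiguous as the existential parts of these theories, the form in which the statement is later used for the decidability results.

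I do not anticipate a real obstacle: once Lemma~\ref{cons} is available the argument is just an assembly. The points that need care are converting $T_\kk\not\vdash\th$ into an honest model $k$ of $T_\kk\cup\{\neg\th\}$ whose \emph{complete} theory one feeds into Lemma~\ref{cons} (so as to control the residue field of the output model), verifying that this passage preserves all the hypotheses of Lemma~\ref{cons}---in particular the $p$-closedness schema and the non-emptiness of $P_1\cap P_2$---and tracking the relativization of $\LL_\kk$- and $\LL_\GG$-sentences to the appropriate sorts of $\LL$ so that assertions like ``$\MM\models T_\kk$'' are well-posed.
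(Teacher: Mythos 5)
Your proof is correct. For Statement~1 your argument matches the paper's almost exactly in spirit; the only cosmetic difference is that you feed the complete theories $\Th(k)$ and $\Th(\G)$ into Lemma~\ref{cons}, whereas the paper applies Lemma~\ref{cons} directly to $T_1=T_\kk\cup\{\neg\th\}$ and $T_2=T_\GG\cup\{\neg\psi\}$ and invokes consistency of $\mathcal{T}(e,T_1,T_2)$. Both routes check the same hypotheses of Lemma~\ref{cons} ($p$-closedness, non-triviality, $p\in P_1\cap P_2$) and produce the same kind of countermodel.

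For Statement~2 you genuinely diverge from the paper, and in a way that is a clean simplification. The paper runs a model-theoretic argument: it picks $F$ and $G$ with $T_\kk=\Th(F)$, $T_\GG=\Th(G)$, takes $k\models\Th_\E(F)\cup\{\neg\th\}$ and $\G\models\Th_\E(G)\cup\{\neg\psi\}$, uses $F\Rightarrow_\E k$ and existentiality of $\th$ to conclude $F\models\neg\th$ (and similarly for $G$), and then contradicts $T\vdash\th\vee\psi$; this argument really does use the completeness hypothesis to write $T_\kk$ as $\Th(F)$ for a single $F$. You instead observe the purely syntactic fact that for existential $\chi$ and any theory $S$ one has $S\vdash\chi$ iff $S_\E\vdash\chi$, and plug this into Statement~1. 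That derivation is shorter, avoids any model-theoretic detour, and in fact shows that the completeness hypothesis on $T_\kk$ and $T_\GG$ is dispensable for the corollary itself. One small inaccuracy in your commentary: $(T_\kk)_\E$ is unambiguous without completeness (it is just $\{\f\in\E:T_\kk\vdash\f\}$), so completeness is not what makes the notation well-posed; it is a hypothesis carried along because of how the corollary is later applied, not because the statement needs it. This does not affect the validity of your proof.
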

\begin{proof}
    \begin{enumerate}[wide, label*={\arabic*.}]
        \item The reverse implication follows from $T\vdash T_{\kk}\cup T_{\GG}$. For the direct implication, assume that $T_1=T_{\kk}\cup\{\neg\th\}$ and $T_2=T_{\GG}\cup\{\neg\psi\}$ are consistent. Using the notation of Lemma \ref{cons}, we have by assumption that $p\in P_1\cap P_2,$ implying that the $\LL$-theory $T^{*}=\mathcal{T}(e,T_1,T_2)$ is consistent. But $T^{*}\vdash T$ and $T^{*}\vdash\neg\th\wedge\neg\phi,$ contradicting that $T\vdash\th\vee\psi.$  

        \item The reverse inclusion follows from $T\vdash (T_{\kk})_\E\cup (T_{\GG})_\E.$ For the direct implication, suppose that $T_\kk=\Th_{\LL_\kk}(F)$ and $T_\GG=\Th_{\LL_\GG}(G)$ for some $p$-closed field $F$ and some $p$-divisible ordered abelian group $G.$ If $(T_\kk)_\E\not\vdash\th$ and $(T_\GG)_\E\not\vdash\psi,$ then there is some field $k$ and some group $\G$ such that $k\models \Th_\E(F)\cup\{\neg\th\}$ and $\G\models \Th_\E(G)\cup\{\neg\psi\}.$ It follows that $F\Rightarrow_\E k$ and $G\Rightarrow_\E \G,$ and since $\th$ and $\psi$ are existential, we have that $F\models\neg\th$ and $G\models\neg\psi.$ Therefore $T^{*}=\mathcal{T}(e,\Th_{\LL_\kk}(F),\Th_{\LL_\GG}(G))$ is consistent and $T^{*}\vdash\neg\th\wedge\neg\psi.$ As above, this contradicts the fact that $T^{*}\vdash T$ and that $T\vdash\th\vee\psi.$          
        \qedhere
    \end{enumerate}
\end{proof}

\subsection{Computability}

Let $\LL$ be a \emph{countable} language, let $T$ be an $\LL$-theory and let $\D'\subseteq\D$ be two sets of $\LL$-formulas. 
If we fix an injection $\a:\LL\to\N,$ by a standard Gödel coding we obtain an injection $\widetilde{\a}:\text{Form}(\LL)\to\N$ defined on the set $\text{Form}(\LL)$ of $\LL$-formulas. If $T_1$ and $T_2$ are two $\LL$-theories, we say that $T_1$ is \emph{many-one reducible} to $T_2$ if there is a computable function $f:\N\to\N$ such that $n\in\widetilde{\a}(T_1)$ if and only if $f(n)\in\widetilde{\a}(T_2).$ We denote this by $T_1\leq_m T_2.$ We say that $T_1$ is \emph{Turing reducible} to $T_2,$ denoted by $T_1\leq_T T_2,$ if $\widetilde{\a}(T_1)\leq_T\widetilde{\a}(T_2),$ i.e. if there is a Turing machine with an oracle for $\widetilde{\a}(T_2)$ that computes the characteristic function of $\widetilde{\a}(T_1).$ We will also denote by $T_1\oplus T_2$ the set $2\widetilde{\a}(T_1)\cup(2\widetilde{\a}(T_2)+1)\subseteq\N.$ 
The following lemma is well known.
\begin{lema}
    Suppose that $T,$ $\D'$ and $\D$ are computably enumerable. If every $\D$-formula is equivalent modulo $T$ to a $\D'$-formula, then there is a computable function $\t:\D\to\D'$ such that $T\vdash\A x(\f(x)\sii\t\f(x))$ for all $\f(x)\in\D.$
\end{lema}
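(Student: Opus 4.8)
The plan is to combine two standard facts: provability from a computably enumerable theory is a computably enumerable relation on (codes of) formulas, and an unbounded search over the formulas of $\D'$ always halts once a witness is guaranteed to exist.

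First I would note that, since $T$ is computably enumerable and first-order logic admits a complete effective proof calculus, the set
$$E=\left\{\bigl(\widetilde{\a}(\f),\widetilde{\a}(\psi)\bigr):\f\in\D,\ \psi\in\D',\ T\vdash\A x\,(\f(x)\sii\psi(x))\right\}$$
is computably enumerable, where we tacitly restrict to pairs $\f,\psi$ having the same free variables and $x$ lists those variables. Indeed, one enumerates in parallel the finite subsets of $T$ (using a computable enumeration of $T$) and all finite strings over the alphabet of $\LL$, and checks the decidable condition that a given string is a correct formal derivation, from a given finite subset of $T$, of the sentence $\A x\,(\f(x)\sii\psi(x))$; whenever such a check succeeds, the pair $\bigl(\widetilde{\a}(\f),\widetilde{\a}(\psi)\bigr)$ is enumerated into $E.$

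Next, fix computable enumerations $(\f_n)_{n<\omega}$ of $\D$ and $(\psi_n)_{n<\omega}$ of $\D'$; if $\D=\emptyset$ the empty function is the desired $\t$, and if $\D\neq\emptyset$ then by hypothesis $\D'\neq\emptyset$, so the enumerations make sense. On input $\f\in\D$, run the semidecision procedure for $E$ while dovetailing over all $i<\omega$, asking whether $\bigl(\widetilde{\a}(\f),\widetilde{\a}(\psi_i)\bigr)\in E$; by the hypothesis that every $\D$-formula is equivalent modulo $T$ to a $\D'$-formula, together with completeness of the calculus, there exists some $i$ for which this holds \emph{and} a finite witnessing derivation, so the search terminates. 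Define $\t(\f)$ to be $\psi_i$ for the first index $i$ for which a witness is found. Then $\t:\D\to\D'$ is total and computable, and $T\vdash\A x\,(\f(x)\sii\t\f(x))$ for every $\f\in\D$ by construction.

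The one genuinely load-bearing step is the \emph{totality} of $\t$: an unbounded search of this kind halts on input $\f$ only because the hypothesis guarantees that a $\D'$-equivalent of $\f$ exists, and completeness of first-order logic guarantees that the semantic equivalence modulo $T$ is witnessed by an actual finite proof from finitely many axioms of $T.$ Everything else is routine bookkeeping — in particular the restriction to pairs with matching free variables, which ensures $\t\f$ has exactly the same free variables as $\f.$ No bound on the running time of the search is available, but none is needed for mere computability.
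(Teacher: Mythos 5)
Your proof is correct and follows essentially the same approach as the paper's: both enumerate candidate $\D'$-formulas while searching (via dovetailing / enumeration of $\N^3$) for a finite formal derivation of the equivalence from a finite fragment of the computably enumerable theory $T$, and both deduce halting from the hypothesis that an equivalent $\D'$-formula exists together with the completeness of the proof calculus. Your phrasing via the computably enumerable relation $E$ is a mild repackaging of the paper's explicit enumeration of proofs $P_{m,n}$, not a different argument.
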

\begin{proof}[Proof Sketch]
    Suppose $\{\psi_n:n\in\N\}$ is a computable enumeration of $\D',$ and for a given $\f\in\D$, a given $\psi\in\D'$ and a given length $m\in\N,$ let $\{P_{m,n}(\f,\psi):n\in\N\}$ be a computable enumeration of all possible proofs modulo $T$ of length $m$ of the sentence $\A x(\f(x)\sii\psi(x)).$ This computable enumeration exists because $T$ is computably enumerable. Finally, let $f(n)=(f_1(n),f_2(n),f_3(n))$ be a computable enumeration of $\N^{3}.$ The function $\t$ runs the following algorithm: with input $\f(x)\in\D,$ let $n=0.$ While $n\geq0,$ print $\psi_{f_1(n)}$ and check wether $P_{f_2(n),f_3(n)}(\f(x),\psi_{f_1(n)})$ is a proof. If so, return $\t\f(x)=\psi_{f_1(n)}(x),$ else let $n=n+1.$ Since every $\D$-formula is equivalent modulo $T$ to a $\D'$-formula, there has to be some $n\in\N$ such that $P_{f_2(n),f_3(n)}(\f(x),\psi_{f_1(n)}(x))$ is a proof, meaning that the algorithm halts at every input $\f(x)\in\D$. 
\end{proof}

For an $\LL$-theory $T$ and a set of $\LL$-formulas $\D,$ we will write $T_\D$ for the set $\{\f\in\D:T\vdash\f\}.$ 
Using the terminology of Anscombe and Fehm \cite{af}, the function $\t$ described in the latter Lemma defines a computable \emph{translation} from the context $(\D,T)$ to the context $(\D',T).$ 
The following fact will allow us to conclude moreover that, under the same hypotheses, $T_{\D}$ is many-one reducible to $T_{\D'}.$ 

\begin{fact}[Cf. {\cite[Lemma 2.5]{af}}]
    Let $\t$ be a translation from the context $(\D,T)$ to the context $(\D',T).$ Then $T_\D=\t^{-1}(T_{\D'}).$ If moreover $\D,\D'$ and $\t$ are computable, then $T_\D\leq_mT_{\D'}.$
\end{fact}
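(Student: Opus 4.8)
The plan is to deduce both assertions from the single defining property of a translation, namely that $T\vdash\A x(\f(x)\sii\t\f(x))$ for every $\f\in\D.$ First I would establish the set identity $T_\D=\t^{-1}(T_{\D'}).$ Fix $\f\in\D$ and let $\t\f\in\D'$ be its translation. Taking universal closures where needed, the biconditional $T\vdash\A x(\f(x)\sii\t\f(x))$ yields immediately that $T\vdash\f$ if and only if $T\vdash\t\f$ (apply modus ponens to each of the two implications). Unwinding the definitions, $\f\in T_\D$ iff $T\vdash\f$ iff $T\vdash\t\f$ iff $\t\f\in T_{\D'}$ iff $\f\in\t^{-1}(T_{\D'}),$ which is precisely the claimed equality.

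For the many-one reduction, assume in addition that $\D,$ $\D'$ and $\t$ are computable; concretely, $\widetilde\a(\D)\subseteq\N$ is decidable, the restriction of $\widetilde\a^{-1}$ to it is computable, and $\t$ induces a computable function $\h{\t}$ on $\widetilde\a(\D)$ with $\h{\t}(\widetilde\a(\f))=\widetilde\a(\t\f)$ for all $\f\in\D.$ Fix once and for all a number $n_0\in\N$ lying outside the range of $\widetilde\a$; since a standard Gödel coding has decidable range with nonempty complement, such an $n_0$ is found effectively, and in particular $n_0\notin\widetilde\a(T_{\D'}).$ Define $f:\N\to\N$ by $f(n)=\h{\t}(n)$ if $n\in\widetilde\a(\D)$ and $f(n)=n_0$ otherwise; this is a total computable function. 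If $n=\widetilde\a(\f)$ for some $\f\in\D,$ then by the identity just proved $n\in\widetilde\a(T_\D)$ iff $\f\in T_\D$ iff $\t\f\in T_{\D'}$ iff $f(n)=\widetilde\a(\t\f)\in\widetilde\a(T_{\D'}).$ If instead $n\notin\widetilde\a(\D),$ then $n\notin\widetilde\a(T_\D)$ and $f(n)=n_0\notin\widetilde\a(T_{\D'}),$ so the equivalence again holds on both sides. Hence $f$ witnesses $T_\D\leq_m T_{\D'}.$

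The only subtle point is the bookkeeping for inputs $n\in\N$ that do not code a formula of $\D$: a many-one reduction must still be defined on these, computably, and must send them to non-elements of $\widetilde\a(T_{\D'}).$ I expect this to be the main (though routine) obstacle; it is resolved above by reserving a computable junk value $n_0$ outside the image of $\widetilde\a.$ If one worries that $\widetilde\a$ could happen to be surjective, it suffices to compose the coding with $n\mapsto 2n$ from the start, so that the odd numbers provide an inexhaustible supply of junk values. Everything else is a direct transcription, through $\widetilde\a,$ of the logical equivalence furnished by the translation.
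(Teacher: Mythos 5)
The paper states this as a \emph{Fact} with a citation to Anscombe--Fehm and gives no proof of its own, so there is nothing internal to compare against; your proof is correct and is the natural argument one would expect for the cited lemma. The set identity follows directly from the defining biconditional of a translation, and your treatment of the many-one reduction is careful in exactly the right places: you correctly route non-codes to a fixed junk value $n_0$ chosen effectively outside the range of $\widetilde\a$ (hence outside $\widetilde\a(T_{\D'})$), and you preempt the surjectivity worry by pre-composing with $n\mapsto 2n$.
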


\begin{cor}
\label{mto}
    Suppose that $T,$ $\D'$ and $\D$ are computably enumerable. If every $\D$-formula is equivalent modulo $T$ to a $\D'$-formula, then $T_\D\leq_mT_{\D'}.$
\end{cor}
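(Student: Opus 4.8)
The plan is to combine the previous Lemma (which produces a computable translation $\tau\colon\D\to\D'$) with the cited Fact from Anscombe–Fehm \cite[Lemma 2.5]{af}. First I would observe that all the hypotheses needed are in place: $T$, $\D'$ and $\D$ are computably enumerable by assumption, and the hypothesis that every $\D$-formula is equivalent modulo $T$ to a $\D'$-formula is precisely what the preceding Lemma requires. So the preceding Lemma yields a \emph{computable} function $\tau\colon\D\to\D'$ with $T\vdash\A x(\f(x)\sii\tau\f(x))$ for every $\f(x)\in\D$; that is, $\tau$ is a translation from the context $(\D,T)$ to the context $(\D',T)$ in the sense of \cite{af}.

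Next I would feed this $\tau$ into the cited Fact. It gives $T_\D=\tau^{-1}(T_{\D'})$ unconditionally, and moreover, since $\D$, $\D'$ and $\tau$ are all computable, it gives $T_\D\leq_m T_{\D'}$. (Here one should be a little careful: the preceding Lemma is stated for \emph{computably enumerable} $\D,\D'$, while the Fact asks for \emph{computable} $\D,\D'$; but in our application $\D$ and $\D'$ will be syntactically defined fragments — existential formulas, quantifier-free formulas, etc. — which are genuinely computable, so this is not an obstacle. If one wanted to be fully general one would note that the translation $\tau$ is total and computable regardless, and the reduction $n\in\widetilde\a(T_\D)\iff \widetilde\a(\tau(\widetilde\a^{-1}(n)))\in\widetilde\a(T_{\D'})$ works whenever membership in $\widetilde\a(\D)$ is decidable.) This already yields $T_\D\leq_m T_{\D'}$, which is the claim.

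I do not expect a genuine obstacle here — the corollary is essentially a one-line composition of the two results immediately preceding it. The only point requiring the smallest amount of attention is the mild mismatch between ``computably enumerable'' in the statement of this corollary and the preceding Lemma versus ``computable'' in the hypothesis of \cite[Lemma 2.5]{af}; in all intended applications the fragments $\D,\D'$ are decidable, so the issue does not arise, but a careful write-up should either restrict to that case or remark that the translation $\tau$ produced is total computable and the many-one reduction goes through verbatim.

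\begin{proof}
    By hypothesis, $T,$ $\D'$ and $\D$ are computably enumerable and every $\D$-formula is equivalent modulo $T$ to a $\D'$-formula, so the previous Lemma provides a computable function $\t:\D\to\D'$ with $T\vdash\A x(\f(x)\sii\t\f(x))$ for all $\f(x)\in\D.$ In other words, $\t$ is a (computable) translation from the context $(\D,T)$ to the context $(\D',T).$ Since $\D,\D'$ and $\t$ are computable, the cited Fact \cite[Lemma 2.5]{af} yields $T_\D=\t^{-1}(T_{\D'})$ and $T_\D\leq_m T_{\D'},$ as wanted.
\end{proof}
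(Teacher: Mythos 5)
Your proof is correct and is exactly the argument the paper intends: the corollary is left without an explicit proof precisely because it is the composition of the preceding Lemma (producing the computable translation $\t$) with the cited Fact from \cite{af}. Your side remark about the mismatch between \emph{computably enumerable} in the hypotheses and \emph{computable} in the Fact is well taken — in the paper's only application (Corollary \ref{mtosamk}) the fragments $\D,\D'$ are explicitly noted to be computable, so the reduction goes through as you describe.
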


\section{The Embedding Lemma}

The following is the key result of this document, and is inspired by the Separable Relative Embedding Property of Separably-Tame valued fields, cf. \cite[Lemma 4.3]{fvkp}. Recall that $\LL_\kk$ and $\LL_\GG$ are allowed to have more symbols than $\LL_{ring}$ and $\LL_{og}\cup\{\infty\}$ respectively. 

\begin{teorema}[Embedding Lemma]
\label{emblem}
    Let $\LL=\LL(\LL_\kk,\,\LL_\GG),$ let $\MM,\NN$ be two models of $\texttt{SAMK}_{e}^{\l,\ac}$ and let $\AA$ be the $\LL$-substructure of $\MM$ generated by a $\l$-closed subring $A$ of $M.$ Suppose that $\MM$ is $\aleph_0$-saturated and that $\NN$ is $|M|^{+}$-saturated. 
    Let $\sigma:\kk(M)\to\kk(N)$ and $\rho:\GG(M)\to\GG(N)$ be some $\LL_\kk\,$- and $\LL_\GG\,$-embeddings respectively. 
    If $\iota:A\to N$ is a ring embedding making $\Sigma=(\iota,\sigma|_{Av},\rho|_{vA}):\AA\to\NN$ into an $\LL$-embedding, then there is some ring embedding $\widetilde{\iota}:M\to N$ such that $\widetilde{\Sigma}:=(\widetilde{\iota},\sigma,\rho):\MM\to\NN$ is an $\LL$-embedding extending $\Sigma.$ 
\end{teorema}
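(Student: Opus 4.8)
The plan is to build $\widetilde\iota$ by a Zorn's Lemma argument: consider the poset of all pairs $(F,\tau)$ where $A\subseteq F\subseteq M$ is a $\lambda$-closed subring, $M^{as}\cap \Frac(F)=F$ if helpful, and $\tau:F\to N$ is a ring embedding such that $(\tau,\sigma|_{Fv},\rho|_{vF})$ is an $\LL$-embedding extending $\Sigma$; order by extension. Saturation of $\NN$ guarantees that unions of chains stay inside $N$ (the cofinality of $|M|^+$ beats $|M|$), so a maximal element $(F,\tau)$ exists, and one must show $F=M$. Suppose not, pick $a\in M\setminus F$; by Lemma \ref{lcltofrac} we may assume $F$ is a $\lambda$-closed subfield, and the goal is to extend $\tau$ past $a$, contradicting maximality. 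The strategy is the usual Kaplansky-style case division on the extension $F(a)|F$, always arranging after each step that the new field is again relatively algebraically closed / separable-algebraically maximal where needed, and that the angular component, the $\lambda$-functions, and the residue/value-group maps keep commuting with $\sigma,\rho$.

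The case analysis I would run, roughly in this order. \textbf{(1) Residue and value-group enlargement.} First reduce to the case $Fv=Mv$ and $vF=vM$: given $\alpha\in Mv$ not in $Fv$, use Lemma \ref{perfcoresat} and Lemma \ref{sepcore} to pull back a $\lambda$-closed perfect subring $\F_p(a^{1/p^\infty})$ with $\res_v(a^{1/p^n})=\alpha^{1/p^n}$, and realize the corresponding residue-field data in $N$ using $|M|^+$-saturation and the fact that $\sigma$ is an $\LL_\kk$-embedding; separability of the new extension is controlled by Lemma \ref{sepcorr}/Lemma \ref{fincosep}, $p$-independence transferring by Lemma \ref{sepcore} and Lemma \ref{pindtran}. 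Angular components extend by Lemma \ref{nwhit} (choosing representatives with $\ac=1$) and are forced to agree with $\sigma$. Do the symmetric thing for $vF\subsetneq vM$ using part 2 of Lemma \ref{perfcoresat} and part 1 of Lemma \ref{nwhit}, and for a ramified value $\gamma$ with $n\gamma\in vF$, $\gcd(n,p)=1$, use part 2 of Lemma \ref{nwhit}. \textbf{(2) Artin–Schreier / henselization.} Since $\MM,\NN$ are $\texttt{SAMK}$, by Lemma \ref{samkasc} they are Artin–Schreier closed and henselian, so we may close $F$ under henselization and under ad-hoc Artin–Schreier roots (of negative value), using uniqueness of these extensions to transport $\tau$; Lemma \ref{asclosure} ensures this does not change $Fv$ or $vF$ beyond what step (1) already handled. \textbf{(3) The immediate, transcendental case.} If $F(a)|F$ is now immediate and $a$ is a pseudo-limit of a pseudo-Cauchy sequence $\{a_\nu\}$ in $F$ of transcendental type over $F$, then by part 1 of Fact \ref{tralgtype} and $\aleph_0$-saturation of... wait, we need a pseudo-limit in $N$: here $|M|^+$-saturation of $\NN$ gives a pseudo-limit $b$ of $\{\tau a_\nu\}$ in $N$, and Fact \ref{tralgtype} yields the valued-field isomorphism $F(a)\cong F(b)$; separability and $\lambda$-closedness of the new subring are handled as above, angular component is unique by the Corollary after Lemma \ref{uac} since the extension is unramified. \textbf{(4) The immediate, algebraic case — the crux.} If instead $\{a_\nu\}$ is of algebraic type over $F$ without pseudo-limit in $F$: since $(F,v)$ is separable-algebraically maximal (this is where Lemma \ref{pop}/Lemma \ref{popnotsep} and the bookkeeping that $F$ stays $\texttt{SAMK}$ pay off, via regularity of $M|F$ once $F$ is relatively algebraically closed in $M$), Fact \ref{sampc} forces $a\in F^c$, the completion; then, using the Kaplansky Uniqueness Theorem (Fact \ref{kap}) applied inside a maximal immediate extension of $\tau F$ in an elementary extension of $\NN$, together with part 2 of Fact \ref{tralgtype}, we transport $a$ to a suitable $b\in N$ realizing the right valued-field type over $\tau F$, using $|M|^+$-saturation to actually land in $N$. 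Remark \ref{trt} lets us distinguish the transcendental-completion sub-case. \textbf{(5) The residually/value-theoretically transcendental case} $a$ transcendental over $F$ with either $vF(a)\supsetneq vF$ or $F(a)v\supsetneq Fv$ — but step (1) has already been iterated to exhaustion, so this does not occur for a maximal $F$ with $Fv=Mv$, $vF=vM$; the only remaining possibility after (1)–(4) is $F=M$.

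\textbf{Main obstacle.} The genuinely hard step is (4), the immediate algebraic case: one must simultaneously (a) know that the current $F$ is still separable-algebraically maximal and Kaplansky so that Fact \ref{sampc} and Fact \ref{kap} apply — which requires carefully maintaining, throughout steps (1)–(3), that $F$ is relatively algebraically closed in $M$ (so $M|F$ is regular, invoking Lemma \ref{pop}) or at least that the relevant going-down lemmas apply (Lemma \ref{popnotsep}); (b) find the image point $b\in N$ not merely in some extension but in $N$ itself, which is exactly what the $|M|^+$-saturation hypothesis on $\NN$ is for, combined with $|F|\le|M|$; and (c) verify that after adjoining $b$ the map $(\tau',\sigma|,\rho|)$ is still an $\LL$-embedding, i.e. that the $\lambda$-functions still commute — here Lemma \ref{cosepemb} is the key, provided $\tau'$ is separable, which is ensured because the extension is immediate hence (after taking relative algebraic closure) $a$ can be taken so that $F(a^{1/p^\infty})\subseteq M$ and Lemma \ref{sepcore} applies. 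Keeping all these invariants consistent across the five cases, and checking that the five cases are genuinely exhaustive for a $\lambda$-closed $F$ that is maximal in the poset, is the bulk of the work; the individual transports are routine given the preliminary lemmas.
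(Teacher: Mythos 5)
Your overall architecture is the same as the paper's: saturation of $\NN$, Zorn's lemma, Kaplansky's uniqueness theorem, and a case division that first exhausts the residue field and value group and then settles the immediate transcendental/algebraic cases. Many of the cited lemmas are exactly the ones used. However, there is a genuine gap in the step that handles the $p$-basis, and the claim you make to dodge it is wrong.

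The issue is the following. Suppose at some point $a \in M \setminus F$ is transcendental over $F$, $F(a)\mid F$ is immediate, and --- this is the case you do not address --- $a$ is $p$\emph{-independent in $M$ over $F$}. Then $a^{1/p}\notin M$, so your assertion in the ``main obstacle'' paragraph that one can take $a$ with $F(a^{1/p^\infty})\subseteq M$ and invoke Lemma~\ref{sepcore} simply fails. To make the extended map $\tau'$ separable (hence commuting with the $\lambda$-functions, by Lemma~\ref{cosepemb}) one needs the chosen image $b = \tau'(a)\in N$ to be $p$-independent over $\tau F$ in $N$, by Lemma~\ref{sepcorr}. That is not automatic from being a pseudo-limit of $\{\tau a_\nu\}$. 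The paper handles this in a dedicated step (its Step 11): fix a $p$-basis $\bar a$ of $M$ over the current $F$, extend element by element, and at each stage argue that the set of admissible images (pseudo-limits inside a suitable ball) cannot all lie in $N^p(\tau F)(\tau\bar c)$, by combining Lemma~\ref{vtop} with a count of imperfection degrees: $\imdeg(N) = |\bar c| + \imdeg(F) < \imdeg(F(a)) \le \imdeg(M)$, which forces either a contradiction with $\imdeg(M)=\imdeg(N)=e$ in the finite case, or a contradiction with $\imdeg(N)>\imdeg(M)$ in the saturated infinite case. This is precisely where the hypothesis that $\MM$ and $\NN$ are both models of $\texttt{SAMK}_e^{\l,\ac}$ for the \emph{same} Ershov degree $e$ enters, and your proposal never uses it. Without this step, the single Zorn poset you set up can get stuck at a proper $F$: there would be a $p$-independent transcendental $a\in M\setminus F$ admitting no admissible separable image in $N$.

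A related, smaller organizational point: the paper does not run one Zorn argument over all extensions at once, but a linear sequence $F_0\subseteq F_1\subseteq\cdots\subseteq F_7\subseteq M$ where each stage has its own tailored poset. The order of the stages carries real content --- the residue field must be separably closed in $\kk(M)$ before it is made perfect, the value group must be $p$-divisible (via Artin--Schreier closure) before it is made relatively divisible and then full, the $p$-basis must be adjoined while the extension is still immediate and regular so that $F_7$ remains $\texttt{SAMK}$ by Lemma~\ref{pop}, and only then is Kaplansky theory applied to finish. Your single mixed poset does not make these dependencies visible and would need substantial care to certify that each of your cases (1)--(5) is reachable in the right order with the right invariants in force, on top of the missing $p$-basis/imperfection-degree argument.
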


For the proof of this theorem, the driving idea is to split the ring extension $A\subseteq M$ into smaller sub-extensions $$A\subseteq F_0\subseteq F_1\subseteq\.\subseteq F_7\subseteq M,$$ where $F_0,\.,F_7$ are subfields of $M,$ and argue that $\iota$ can be extended each time from $A$ to $F_0,$ from $F_i$ to $F_{i+1}$ for $i\in\{0,\.,6\},$ and from $F_7$ to $M.$ At each stage, we will make sure that the extension $\widetilde{\iota}$ of $\iota$ induces an $\LL$-embedding $\widetilde{\Sigma}=(\widetilde{\iota},\sigma,\rho)$ extending the $\Sigma$ of the step before. This choice of presentation is influenced by Hils' exposition of Pas' field quantifier elimination \cite{mh}.

\begin{proof}[Proof of Theorem \ref{emblem}]
 
Let $p$ be the characteristic exponent of $A,$ i.e. $p=1$ if $\char(A)=0$ and $p=\char(A)$ otherwise.
    \begin{enumerate}[wide, label=\underline{Step \arabic*}]{}
    \item: \emph{$\iota$ extends to $F_0=\Frac(A).$} Indeed, if $a,b\in A$ are non-zero, we define the extension $\widetilde{\iota}(a/b):=\iota(a)/\iota(b).$ Then 
    \begin{align*}
        \rho(v(a/b))&=\rho(v(a)-v(b)) & \sigma(\ac_v(a/b))&=\sigma(\ac_v(a)/\ac_v(b))\\
        &=\rho(v(a))-\rho(v(b)) &&=\sigma(\ac_v(a))/\sigma(\ac_v(b))\\
        &=w(\iota(a))-w(\iota(b)) &&=\ac_w(\iota(a))/\ac_w(\iota(b))\\
        &=w(\iota(a)/\iota(b)) &&=\ac_w(\iota(a)/\iota(b))\\
        &=w(\widetilde{\iota}(a/b)), &&=\ac_w(\widetilde{\iota}(a/b)).
    \end{align*}
    Since $A$ is closed under the parameterized $\l$-functions, then so is $F_0$ by Lemma \ref{lcltofrac}, i.e. $M|F_0$ is separable by Fact \ref{sylvy}. In order to prove that $\widetilde{\iota}$ commutes with the $\l$-functions, by Lemma \ref{cosepemb}, we just need to prove that $\widetilde{\iota}$ is a separable embedding, i.e. that $\widetilde{\iota} F_0$ is $\l$-closed in $N.$ Since $\widetilde{\iota} F_0=\Frac(\iota A),$ by Lemma \ref{lcltofrac}, it is enough to show that $\iota A$ is $\l$-closed in $N,$ but this follows from the fact that $\Sigma$ is an $\LL$-embedding. Indeed, if $b'=\iota b$ is a tuple in $\iota A$ which is $p$-independent in $N,$ if $a'=\iota a\in\iota A\cap N^{p}(b')$ and if $m\in\Mon(|b'|)$ is any monomial, then $$\l_m^{b'}(a')=\l_m^{\iota b}(\iota a)=\iota(\l_m^{b}(a))\in\iota A,$$
    as wanted. Note that $vF_0=vA$ and $F_0v=\Frac(Av).$ 

    \item: \label{step2} \emph{$\iota$ extends to $F_0^{h}$.} Indeed, if $i:(F_0,v)\to(F_0^{h},v)$ and $j:\iota F_0\to(\iota F_0)^{h}$ are the inclusions, then $i\circ\iota: (F,v)\to ((\iota F_0)^{h},w)$ is a henselization of $(F,v).$ Then, by the universal property of henselizations, there is a valued field embedding $\widetilde{\iota}:(F_0^{h},v)\to((\iota F_0)^{h},w)$ such that $j\circ\iota=\widetilde{\iota}\circ i,$ i.e. $\widetilde{\iota}$ is a valued field embedding extending $\iota.$
    
    Since the extension $F_0^{h}|F_0$ is immediate, it is in particular unramified and thus $\ac_v$ extends uniquely from $F_0$ to $F_0^{h}.$ Analogously, the extension $\widetilde{\iota}(F_0^{h})|\iota F_0$ is a sub-extension of the immediate extension $(\iota F_0)^{h}|\iota F_0$, so $\ac_w$ extends uniquely from $\iota F_0$ to $\widetilde{\iota}(F_0^{h}).$ It follows that $\sigma\circ \ac_v \circ(\widetilde{\iota}^{-1})$ is a well defined angular component on $\widetilde{\iota}(F_0^{h})$ extending $\ac_w$ on $\iota F_0,$ implying that $\ac_w=\sigma\circ \ac_v \circ(\widetilde{\iota}^{-1})$ on $\widetilde{\iota}(F_0^{h}),$ i.e. $$\widetilde{\iota}\circ\ac_w=\sigma\circ\ac_v.$$

    Finally, since $M|F_0$ is separable and $F_0^{h}|F_0$ is algebraic, then $M|F_0^{h}$ is also separable. Also, the extension $\widetilde{\iota}$ of $\iota$ from $F_0$ to $F_0^{h}$ is separable, because, for the same reasons, $\widetilde{\iota}(F_0^{h})|\iota F_0$ is algebraic as it is a sub-extension of the algebraic extension $(\iota F_0)^{h}|\iota F_0$ and $N|\iota F_0$ is separable. By Lemma \ref{cosepemb}, the ring embedding $\widetilde{\iota}$ commutes with the $\l$-functions of $N.$ 
    
    To sum up, the induced extension $\widetilde{\Sigma}=(\widetilde{\iota},\sigma,\rho)$ is a well founded $\LL$-embedding at this point. Therefore, replacing $F_0$ by $F_0^{h}$ and $\iota$ by $\widetilde{\iota},$ we may assume that $F_0$ is henselian.  
    
    \item: \label{Step 3} \emph{$\iota$ extends to some $F_1$ for which the extension $(F_1|F_0,v)$ is algebraic, unramified and such that $F_1v=(F_0v)^{rsc}$}. Indeed, let $S$ be the set of pairs $(F,\i_{*})$ where
    \begin{itemize}
        \item $F$ is a subfield of $M$ extending $F_0,$ 
        \item $F|F_0$ is algebraic,
        \item $\i_{*}:F\to N$ is a ring embedding for which $\Sigma_{*}=(\iota_{*},\sigma|_{Fv},\rho|_{vF}):\langle F\rangle\to\NN$ is an $\LL$-embedding that extends $\Sigma,$
        \item $Fv|F_0v$ is separable algebraic, and
        \item $vF=vF_0,$
    \end{itemize}
    ordered at each coordinate by inclusion. Then $S$ is closed under unions of chains,
    and $(F_0,\i)\in S.$ By Zorn's Lemma, there is a maximal element $(F_1,\i_1)\in S.$ We claim that $F_1v=(F_0v)^{rsc}=(F_1v)^{rsc}.$ Indeed, let $\a\in\kk(M)$ be separable-algebraic over $F_1v.$ Let $c_0,\.,c_n\in\O_v\cap F_1,$ with $c_n=1,$ be such that $\sum_{i=0}^{n}\res_v(c_i)X^{i}$ is the minimal polynomial of $\a$ over $F_1v.$ Since $(M,v)$ is henselian, there is some $a\in\O_v$ such that $\res_v(a)=\a$ and $\sum_{i=0}^{n}c_ia^{i}=0.$ It follows that $F_1(a)|F_1$ is algebraic, and since $F_1$ is henselian ---for it is an algebraic extension of $F_0$--, then $v$ extends uniquely to $F_1(a).$ Therefore, the inequalities 
    \begin{align*}
        n&=[F_1(a):F_1]\\
        &=p^{d}[F_1(a)v:F_1v](vF_1(a):vF_1)\\
        &\geq [F_1(a)v:F_1v]\\
        &\geq [(F_1v)(\a):F_1v]\\
        &=n
    \end{align*}
    imply that this extension is unramified, so $\ac_v$ extends uniquely to $F_1(a)$ too. If $b$ is a root of $\sum_{i=0}^{n}\i_1(c_i)X^{i}$ such that $\res_w(b)=\sigma(\a),$ we can define the embedding $\widetilde{\iota}$ of $F_1(a)$ in $N$ by $\widetilde{\iota}(a)=b$ and $\widetilde{\iota}=\i_1$ on $F_1.$ Note that the same argument implies that the restriction of $w$ to $(\i_1F_1)(b)=\widetilde{\iota}(F_1(a))$ is the unique extension of $w$ from $\i_1F_1$ to $(\i_1F_1)(b),$ and such unique extension is unramified. It follows that $$\rho\circ v \circ(\widetilde{\iota}^{-1})\text{ and }\sigma\circ \ac_v \circ(\widetilde{\iota}^{-1})$$ is a well defined couple of a valuation and an angular component on $(\i_1F_1)(b)$ extending $w$ and $\ac_w$ on $\i_1F_1,$ implying that $$w=\rho\circ v \circ(\widetilde{\iota}^{-1})\text{ and }\ac_w=\sigma\circ \ac_v \circ(\widetilde{\iota}^{-1})$$ on $(\i_1F_1)(b)$ by uniqueness. 

    Since $F_1(a)|F_1$ is algebraic, then $M|F_1(a)$ is separable. In order to see that $\widetilde{\iota}$ commutes with the $\l$-functions, we just need to prove that it is separable, as seen in Lemma \ref{cosepemb}. This follows from the equality $\widetilde{\iota}(F_1(a))=(\i_1F_1)(b),$ and the fact that $(\i_1F_1)(b)|\i_1F_1$ is an algebraic sub-extension of the separable extension $N|\i_1F_1.$ 

    To sum up, the induced extension $\widetilde{\Sigma}=(\widetilde{\iota},\sigma|_{F_1(a)v},\rho|_{vF_1(a)}):\langle F_1(a)\rangle\to \NN$ is a well founded $\LL$-embedding, so altogether $(F_1,\i_1)\leq(F_1(a),\widetilde{\iota})\in S,$ implying that $a\in F_1$ by maximality of $(F_1,\i_1)$ and that $\a=\res_v(a)\in F_1v,$ as wanted.

    Note that $F_1$ is henselian, being an algebraic extension of $F_0=F_0^{h}.$ Also, $F_1v$ is relatively separably closed in $\kk(M),$ and $M|F_1$ and $N|\widetilde{\iota} F_1$ are separable. Denote by $\Sigma_1=(\iota_1,\sigma|_{F_1v},\rho|_{vF_1}):\langle F_1\rangle\to\NN$ the corresponding $\LL$-embedding.

    \item: \label{Step 4} \emph{$\iota_1$ extends to the (ad-hoc) Artin-Schreier closure $F_2=F_1^{as}.$}
    First, let $S$ be the set of pairs $(F,\iota_{*})$ such that 
    \begin{itemize}
        \item $F$ is a subfield of $F_1(a\in F_1^{alg}:a^{p}-a\in F_1,\,v(a^{p}-a)<0)$ extending $F_1$,
        \item $\i_{*}:F\to N$ is a ring embedding for which $\Sigma_{*}=(\iota_{*},\sigma|_{Fv},\rho|_{vF}):\langle F\rangle\to\NN$ is an $\LL$-embedding that extends $\Sigma_1,$
    \end{itemize}
    ordered at each coordinate by inclusion. Then $S$ is closed under unions of chains, and $(F_1,\i_1)\in S.$ By Zorn's Lemma, there is a maximal element $(F_{1,0},\i_{1,0})\in S.$ We claim that $$F_{1,0}=F_1(a\in F_1^{alg}:a^{p}-a\in F_1,\,v(a^{p}-a)<0).$$
    To see this, let's start with some $c\in F_1$ such that $v(c)<0,$ and let $a\in F_1^{alg}$ be such that $a^{p}-a=c.$ Then $v(a)=v(c)/p.$ Since the Artin-Schreier polynomial $X^{p}-X-c$ is irreducible and $F_{1,0}$ is henselian, we have that $$p=[F_{1,0}(a):F_{1,0}]=p^{d}(vF_{1,0}(a):vF_{1,0})[F_{1,0}(a)v:F_{1,0}v]$$ for some $d\in\N.$ If $F_{1,0}(a)|F_{1,0}$ is unramified, as above, $\ac_v$ extends uniquely from $F_{1,0}$ to $F_{1,0}(a),$ and thus any extension $\widetilde{\iota}$ of $\iota_{1,0}$ taking $a$ to any Artin-Schreier root $b$ of $\iota_{1,0}(c)$ will define an $\LL_{\ac}$-embedding at this point.  
    
    If $F_{1,0}(a)|F_{1,0}$ is not unramified, then necessarily $(vF_{1,0}(a):vF_{1,0})=p$ and $v(a)+vF_{1,0}$ is a generator of the cyclic group $vF_{1,0}(a)/vF_{1,0}$ of order $p.$ Then the extension of $\ac_v$ from $F_{1,0}$ to $F_{1,0}(a)$ is completely determined by the value of $\ac_v(a).$ This means that we have to find some Artin-Schreier root of $b\in N$ of $\iota_{1,0}(c)$ such that $\ac_w(b)=\sigma\ac_v(a).$ We will show that this holds for \emph{any} Artin-Schreier root of $\iota_{1,0}(c).$
    Note that $a^{p}-a=c$ implies that $\ac_v(a^{p}-a)=\ac_v(c).$ Since $v(a^{p})<v(a)=v(c)/p<0,$ we get that $\ac_v(a^{p}-a)=\ac_v(a^{p})$ and thus $\ac_v(a)^{p}=\ac_v(c).$ Applying $\sigma$ and arguing similarly in $N$ mutatis mutandis, if $b$ is any Artin-Schreier root of $\iota_{1,0}(c),$ we obtain
    $$(\sigma\ac_v(a))^{p}=\sigma\ac_v(c)=\ac_w(\iota_{1,0}(c))=(\ac_w(b))^{p},$$
    meaning that $\sigma\ac_v(a)=\ac_w(b)$ as wanted. 

    Note that $F_{1,0}|F_1$ is an algebraic sub-extension of the separable extension $M|F_1,$ which implies that $M|F_{1,0}$ is separable. In order to prove that $\widetilde{\iota}$ commutes with the $\l$-functions, we have to prove that it is a separable embedding. This follows from the equality $\widetilde{\iota}(F_{1,0}(a))=(\iota_{1,0}F_{1,0})(b),$ from the fact that $M|F_{1,0}(a)$ is separable (as $F_{1,0}(a)|F_{1,0}$ is an algebraic sub-extension of the separable extension $M|F_{1,0}$) and from the fact that $(\iota_{1,0}F_{1,0})(b)|\iota_{1,0}F_{1,0}$ is an algebraic sub-extension of the separable extension $N|\iota_{1,0}F_{1,0}.$

    In any case, we conclude that $\widetilde{\i}:F_{1,0}(a)\to N$ induces a well-defined $\LL$-embedding $\widetilde{\Sigma}:\langle F_{1,0}(a)\rangle\to\NN,$ so $(F_{1,0},\iota_{1,0})\leq(F_{1,0}(a),\widetilde{\iota})\in S$ and $a\in F_{1,0}$ by maximality of $(F_{1,0},\iota_{1,0}),$ as wanted. Let $\Sigma_{1,0}=(\i_{1,0},\sigma|_{F_{1,0}v},\rho|_{vF_{1,0}}):\langle F_{1,0}\rangle\to\NN$ be the associated $\LL$-embedding. 
    Suppose that $F_{1,n},\,\iota_{1,n}$ and $\Sigma_{1,n}$ have been constructed for some $n\geq 0,$ and put 
    $$F_{1,n+1}=F_{1,n}(a\in F_{1,n}^{alg}:a^{p}-a\in F_{1,n}\text{ and }v(a^{p}-a)<0).$$ We can reproduce the same argument as above to conclude that $\iota_{1,n}$ can be extended to an embedding $\iota_{1,n+1}:F_{1,n+1}\to N$ that induces an $\LL$-embedding $\Sigma_{1,n+1}=(\iota_{1,n+1},\sigma|_{F_{1,n+1}v},\rho|_{vF_{1,n+1}}):\langle F_{1,{n+1}}\rangle\to\NN$ extending $\Sigma_{1,n}.$ Since $F_1^{as}=\bigcup_{n<\omega} F_{1,n},$ we conclude by induction that $\iota_1$ can be extended to some ring embedding $\iota_2$ of $F_2=F_1^{as}$ into $N$ inducing an $\LL$-embedding $\Sigma_2=(\iota_2,\sigma|_{F_2v},\rho|_{vF_2}):\langle F_2\rangle\to\NN$ that extends $\Sigma_1.$ 

    We get that $F_2v\subseteq (F_1v)^{rac}=((F_1v)^{rsc})^{1/p^{\infty}}=(F_1v)^{1/p^{\infty}}\subseteq F_2v,$ where the first inclusion follows because $F_1^{as}|F_1$ is an algebraic sub-extension of the extension $M|F_1$, the first equality follows because $\kk(M)$ is perfect, the second equality follows because $F_1v$ is relatively separably closed in $\kk(M)$, and the last inclusion follows from Lemma \ref{asclosure}. Thus, $F_2v$ is relatively algebraically closed in $\kk(M),$ as it coincides the relative algebraic closure of $F_1v$ in $\kk(M).$ 
    Also, we see that the value group of $F_2$ is $p$-divisible, and $M|F_2$ and $N|\iota_2F_2$ are separable extensions.       

    \item: \emph{$\iota_2$ extends to some $F_3$ such that $M|F_3$ is separable, $(F_3|F_2,v)$ is unramified and $F_3v=\kk(M)$.} Indeed, let $S$ be the set of pairs $(F,\iota_{*})$ such that 
     \begin{itemize}
        \item $F$ is a subfield of $M$ extending $F_2,$ 
        \item $M|F$ is separable,
        \item $\i_{*}:F\to N$ is a ring embedding for which $\Sigma_{*}=(\iota_{*},\sigma|_{Fv},\rho|_{vF}):\langle F\rangle\to\NN$ is an $\LL$-embedding that extends $\Sigma_2,$ and
        \item $vF$ is $p$-divisible, and 
        \item $Fv$ is relatively algebraically closed in $\kk(M),$
    \end{itemize}
    ordered at each coordinate by inclusion. Then $S$ is closed under unions of chains,\footnote{Say, if $(F_i)_{i\in I}$ is a chain of $\l$-closed subfields of $M$, then $\bigcup_{i\in I}F_i$ is a $\l$-closed subfield of $M$ too, so $M|\bigcup_{i\in I}F_i$ is separable.} and $(F_2,\i_2)\in S.$ By Zorn's Lemma, there is a maximal element $(F_3,\i_3)\in S.$ We claim that $F_3v=\kk(M).$ 
    Indeed, suppose $\a\in\kk(M).$ If $\a$ is algebraic over $F_3v,$ then $\a\in F_3v$ because $F_3v$ is relatively algebraically closed in $\kk(M).$ 
    
    If $\a$ is transcendental over $F_3v,$ use Lemma \ref{perfcoresat} to find some $a\in M$ with $v(a)=0,$ $F_3(a^{1/p^{\infty}})\subseteq M$ and such that $\res_v(a^{1/p^{n}})=\a^{1/p^{n}}$ for all $n\geq0.$ If $a$ is algebraic over $F_3,$ then $(F_3(a))v|F_3v$ and $(F_3v)(\a)|F_2v$ are algebraic too, because $(F_3v)(\a)\subseteq (F_3(a))v.$ Therefore $a$ and all its $p$-th roots $a^{1/p^{n}}$ are transcendental over $F_3.$      
    
    We conclude by Gauss's Extension Lemma \cite[Corollary 2.2.2]{ep} that the restriction of $v$ to $F_3(a)$ is the unique extension of $v$ from $F_3$ to $F_3(a),$ that this extension is unramified and that $(F_3(a))v=(F_3v)(\a)$. Use Lemma \ref{perfcoresat} to find some $b\in N$ such that $\res_w(b)=\sigma(\a)$ and $(\i_3F_3)(b^{1/p^{\infty}})\subseteq N.$ This can be done because $(N,v)$ is $\aleph_0$-saturated, $Nv$ is perfect and $\i_3$ is separable. 
    
    By the same reasoning, the restriction of $w$ to $(\i_3F_3)(b)$ is the unique extension of $w$ from $\i_3F_3$ to $(\i_3F_3)(b),$ and this extension is unramified. Define the extension $\i_{3,0}$ of $\i_3$ from $F_3$ to $F_3(a)$ by setting $\i_{3,0}(a)=b.$ As above, this extension induces an $\LL_{\ac}$-embedding, but $\i_{3,0}$ may not commute with the $\l$-functions as $M|F_3(a)$ and $N|(\i_3F_3)(b)$ may not be separable, i.e. $\i_{3,0}$ may not be separable.

    Put $F_{3,n}:=F_3(a,a^{1/p},\.,a^{1/p^{n}}),$ and suppose that its value group is $p$-divisible and $\i_{3,n}:F_{3,n}\to N$ induces an $\LL_{\ac}$-embedding. Note that these hypotheses hold for $F_{3,0}=F_3(a),$ because it is an unramified extension of $F_3,$ whose value group coincides with the $p$-divisible group $vF_2$.
    The algebraic extension $F_{3,n}(a^{1/p^{n+1}})|F_{3,n}$ is purely-inseparable of degree $p,$ so the restriction of $v$ to $F_{3,n}(a^{1/p^{n+1}})$ is the unique extension of $v$ from $F_{3,n}$ to $F_{3,n}(a^{1/p^{n+1}}).$ Therefore
    \begin{align*}
        p&=\left[F_{3,n}(a^{1/p^{n+1}}):F_{3,n}\right]=p^{d}\cdot\left(v\left(F_{3,n}(a^{1/p^{n+1}})\right):vF_{3,n}\right)\cdot\left[\left(F_{3,n}(a^{1/p^{n+1}})\right)v:F_{3,n}v\right]        
    \end{align*}
    for some $d\in\N.$ Note that $\left(v\left(F_{3,n}(a^{1/p^{n+1}})\right):vF_{3,n}\right)$ cannot be equal to $p,$ for $vF_{3,n}$ is $p$-divisible. It follows that $F_{3,n}(a^{1/p^{n+1}})|F_{3,n}$ is unramified, hence the restriction of $\ac_v$ to $F_{3,n}(a^{1/p^{n+1}})$ is the unique extension of $\ac_v$ from $F_{3,n}$ to $F_{3,n}(a^{1/p^{n+1}}).$
    
    Mutatis mutandis, we get that $$(\i_{3,n}(F_{3,n}))(b^{1/p^{n+1}})|\i_{3,n}(F_{3,n})$$ is an unramified extension with uniquely determined $w$ and $\ac_w.$ We can define the extension $\i_{3,n+1}$ of $\i_{3,n}$ on $F_{3,n}(a^{1/p^{n+1}})$ by setting $\i_{3,n+1}(a^{1/p^{n+1}})=b^{1/p^{n+1}},$ and this would induce a well founded $\LL_{\ac}$-embedding. 
    
    By induction, we get an $\LL_{\ac}$-embedding extending $\Sigma_2$ which is induced by some ring embedding $\i_{3,\infty}:F_3(a^{1/p^{\infty}})\to N,$ and the extension $F_3(a^{1/p^{\infty}})|F_3$ is unramified. By Lemma \ref{sepcore}, $M|F_3(a^{1/p^{\infty}})$ is separable. As above, if we wanted $\i_{3,\omega}$ to commute with the $\l$-functions, by Lemma \ref{cosepemb} we just need to prove that $\i_{3,\infty}$ is a separable embedding. Indeed, $\i_{3,\infty}\left(F_3(a^{1/p^{\infty}})\right)=(\i_3F_3)(b^{1/p^{\infty}}),$ and by Lemma \ref{sepcore} again, $N|(\i_3F_3)(b^{1/p^{\infty}})$ is separable. To sum up, $\i_{3,\infty}$ induces a well defined $\LL$-embedding $\Sigma_{3,\infty}:\langle F_3(a^{1/p^{\infty}})\rangle\to\NN$ that extends $\Sigma_2.$

    Now we are able to apply to $(F_3(a^{1/p^{\infty}}),\Sigma_{3,\infty})$ the arguments given in \ref{step2}, \ref{Step 3} and \ref{Step 4} in order to conclude that $\iota_{3,\infty}$ can be extended to some embedding $\widetilde{\iota}:F_3(a^{1/p^{\infty}})^{rac}\to N$ that induces an $\LL$-embedding $\widetilde{\Sigma}:\langle F_3(a^{1/p^{\infty}})^{rac}\rangle\to\NN$ that extends $\Sigma_{3,\infty},$ which in turn extends $\Sigma_2.$ With this, we can finally conclude that $(F_3,\i_3)\leq(F_3(a^{1/p^{\infty}})^{rac},\widetilde{\iota})\in S,$ and thus $a\in F_3$ by maximality of $(F_3,\i_3)$ in $S.$ This is absurd, as $a$ and $\a$ were transcendental over $F_3$ and $F_3v$ respectively. 

    Note that any further sub-extension $F$ of $M|F_3$ would satisfy that $Fv=F_3v=\kk(M).$ Also, $F_3|F_2$ may be transcendental, so $F_3$ may no longer be henselian. 

    \item: \label{Step6} \emph{$\i_3$ extends to $F_3^{h}$} as in \ref{step2}. Therefore, we may assume that $F_3$ is henselian, and nothing changes with respect to the residue fields and value groups.

    \item: \label{Step7} \emph{$\i_3$ extends to some $F_4$ such that $F_4|F_3$ is algebraic and $vF_4=(vF_3)^{rdh}$.} To this end, let $S$ be the set of pairs $(F,\iota_{*})$ such that 
    \begin{itemize}
        \item $F$ is a subfield of $M$ extending $F_3$ and $F|F_3$ is algebraic, 
        \item $\i_{*}:F\to N$ is a ring embedding for which $\Sigma_{*}=(\iota_{*},\sigma|_{Fv},\rho|_{vF}):\langle F\rangle\to\NN$ is an $\LL$-embedding that extends $\Sigma_3,$ and
        \item $vF$ is $p$-divisible,
    \end{itemize}
    ordered at each coordinate by inclusion. Then $S$ is closed under unions of chains, and $(F_3,\i_3)\in S.$ By Zorn's Lemma, there is a maximal element $(F_4,\i_4)\in S.$ Note that $(F_4,v)$ is henselian for it is an algebraic extension of the henselian valued field $(F_3,v).$ We claim that $vF_4=(vF_4)^{rdh}=(vF_3)^{rdh}.$ Since $vF_4\subseteq (vF_3)^{rdh}$ given that $F4|F_3$ is algebraic, we just need to prove the inclusion $(vF_4)^{rdh}\subseteq vF_4.$ 
    To this end, let $\g\in(vF_4)^{rdh}$ and let $n$ be the minimum natural number for which $n\g\in vF_4.$ If $p>1$ and $n=p^{d}m$ for some power $d>0$ and some $m$ coprime with $p,$ then $m\g\in vF_4$ because $vF_4$ is $p$-divisible, contradicting minimality of $n.$ Hence $n$ is coprime with $p$ regardless of $p$ being $1$ or greater than $1.$ By Lemma \ref{nwhit}, there is some $a\in M$ such that $v(a)=\g,$ $c:=a^{n}\in F_4$ and $\ac_v(a)=1.$ By minimality of $n,$ the polynomial $X^{n}-c\in F_4[X]$ is the minimal polynomial of $a$ over $F_4.$\footnote{If $\sum_{i=0}^{d}c_iX^{i}$ is the minimal polynomial of $a$ over $F_4$ (with $c_d=1$) and $d<n,$ then $v(c_i)+i\g\neq v(c_j)+j\g$ for all $0\leq i<j\leq d$ by minimality of $n$. But then $d\g=v(c_i)+i\g$ for some $i\in\{0,\.,d-1\},$ contradicting minimality of $n.$} Therefore, given that $F_4$ is henselian, $n=[F_4(a):F_4]=p^{d}(v(F_4(a)):vF_4)$ for some $d\geq0,$ and because $n$ and $p$ are coprime we get that $n=(v(F_4(a)):vF_4)$. Minimality of $n$ also implies that $v(F_4(a))/vF_4$ is cyclic of order $n,$ generated by $\g+vF_4=v(a)+vF_4.$\footnote{Indeed, the cosets $\{vF_4,\g+vF_4,\.,(n-1)\g+vF_4\}$ are pairwise different by minimality of $n,$ so $\g+vF_4$ is an element of order $n$ in a group $v(F_4(a))/vF_4$ of order $n,$ i.e. $v(F_4(a))/vF_4$ is cyclic and generated by $\g+vF_4.$} 
    
    It follows that the extension of $\ac_v$ from $F_4$ to $F_4(a)$ is completely determined by the value of $\ac_v(a).$ Hence, our goal is to find an element $b\in N$ such that $w(b)=\rho\g,$ $\ac_w(b)=\sigma(\ac_v(a))=\sigma(1)=1$ and $b^{n}=\i_4(c),$ for in this case, the extension $\widetilde{\iota}$ of $\i_4$ defined on $F_4(a)$ and given by $\widetilde{\iota}(a)=b$ would induce a well defined $\LL_{\ac}$-embedding. 

    To this end, find some $d\in N$ such that $w(d)=\rho\g$ and $\ac_w(d)=1,$ say, by Lemma \ref{nwhit}. Since $v(c)=n\g$ and $\ac_v(c)=(\ac_v(a))^{n}=1,$ applying $\rho$ we get that $n\rho\g=\rho(v(c))=w(\i_4(c)),$ so that $w(b^{n}/\i_4(c))=0$ and $\res_w(d^{n}/\i_4(c))=\ac_w(d^{n}/\i_4(c))=(\ac_w(d))^{n}/\ac_w(\i_4(c))=1/\sigma(\ac_v(c))=1.$ By henselianity of $(N,w),$ we get that there is some $f\in N$ such that $f^{n}=d^{n}/\i_4(c)$ and $w(f-1)>0.$ Thus $\ac_w(f)=\res_w(f)=1,$ and $(d/f)^{n}=\i_4(c)\in\i_4F_4,$ $w(d/f)=w(d)=\rho\g$ and $\ac_w(d/f)=1,$ as wanted. 

    Now, if we wanted $\widetilde{\iota}$ to commute with the $\l$-functions, we need to check that $M|F_4(a)$ is separable and that $\widetilde{\iota}$ is separable. Since $M|F_3$ is separable and $F_4(a)|F_3$ is an algebraic sub-extension thereof, we get that $M|F_4(a)$ is indeed separable. Analogously, since $\widetilde{\iota}(F_4(a))=(\i_4F_4)(b)$ is an algebraic extension of $\i_3F_3$ and $N|\i_3F_3$ is separable, we also get that $N|(\i_4F_4)(b)$ is separable, i.e. that $\widetilde{\iota}$ is separable. It follows that there is some well defined $\LL$-embedding $\widetilde{\Sigma}:\langle F_4(a)\rangle\to\NN$ induced by $\widetilde{\i}$ that extends $\Sigma_3.$ 
    
    However, it is not guaranteed that $(F_4(a),\widetilde{\iota})\in S$ because $v(F_4(a))$ may not be $p$-divisible, so we may further extend $(F_4(a),\widetilde{\iota})$ to its Artin-Shreier closure. This can be done by repeating the argument given in \ref{Step 4} for $(F_4(a),\widetilde{\iota})$ instead of $(F_1,\i_1).$ Namely, we can find an extension $\i':F_4(a)^{as}\to N$ that induces an $\LL$-embedding $\Sigma':\langle F_4(a)^{as}\rangle\to\NN$ extending $\widetilde{\Sigma}.$  
    Finally, it follows that $(F_4,\i_4)\leq(F_4(a)^{as},\i')\in S,$ so $a\in F_4$ by maximality of $(F_4,\iota_4)$ and $\g=v(a)\in vF_4,$ as wanted. 

    Note that, at this point, the value group $vF_4$ is relatively divisible in $\GG(M)$, for it coincides with the relative divisible hull of $vF_3.$

    \item: \emph{$\i_4$ extends to a some sub-extension $F_5|F_4$ for which $M|F_5$ is separable and $vF_5=\GG(M).$} To see this, let $S$ be the set of pairs $(F,\i_{*})$ such that 
    \begin{itemize}
        \item $F$ is a subfield of $M$ extending $F_4,$ 
        \item $M|F$ is separable, 
        \item $\i_{*}:F\to N$ is a ring embedding for which $\Sigma_{*}=(\iota_{*},\sigma|_{Fv},\rho|_{vF}):\langle F\rangle\to\NN$ is an $\LL$-embedding that extends $\Sigma_4,$
        \item $vF$ is relatively divisible in $\GG(M),$
    \end{itemize}
    ordered at each coordinate by inclusion. Then $S$ is closed under unions of chains, and $(F_4,\i_4)\in S.$ By Zorn's Lemma, there is a maximal element $(F_5,\i_5)\in S.$ We claim that $vF_5=\GG(M).$ 
    
    Indeed, let $\g\in\GG(M).$ Use Lemma \ref{perfcoresat} to find some $a\in M$ such that $F_5(a^{1/p^{\infty}})\subseteq M$ and $v(a^{1/p^{n}})=\g/p^{n}$ for all $n\geq0.$ 
    
    If $a$ is algebraic over $F_5,$ then $v(F_5(a))\subseteq (vF_5)^{rdh}=vF_5$ and $\g\in vF_5.$ Also, if there is some $n\geq 1$ such that $n\g\in vF_5,$ then $\g\in vF_5$ because $vF_5$ is relatively divisible in $\GG(M).$   
    If $a$ is transcendental over $F_5$ and the only $n\in\N$ satisfying $n\g\in vF_5$ is $n=0,$ then the extension of $v$ from $F_5$ to $F_5(a)$ is uniquely determined and coincides with the Gauss Extension of $v|_{F_5},$ by Gauss's Extension Lemma \cite[Corollary 2.2.3]{ep}. This extension is not unramified, because in this case $v(F_5(a))=vF_5\oplus\Z\g.$ This implies that $vF_5(a)/vF_5$ is generated by $\g=v(a),$ so the extension of $\ac_v$ from $F_5$ to $F_5(a)$ is uniquely determined by the value of $\ac_v(a).$ This is, if we find some $b\in N$ such that $w(b)=\rho\g$ and $\ac_w(b)=\sigma(\ac_v(a)),$ then, as above, the extension $\widetilde{\iota}$ of $\i_5$ given by $\widetilde{\iota}(a)=b$ induces a well founded $\LL_{\ac}$-embedding defined in $F_5(a).$ We will even find some $b\in N$ such that $(\i_5F_5)(b^{1/p^{n}})\subseteq N$ and such that $\ac_w(b^{1/p^{n}})=\sigma(\ac_v(a))^{1/p^{n}}$ and $w(b^{1/p^{n}})=\rho\g/p^{n}$ for all $n<\omega.$

    Since $(N,w,\ac_w)$ is $\aleph_0$-saturated, we can obtain such a $b$ by realizing the partial type $\pi(x)$ over \{$\rho\g,\sigma(\ac_v(a))\}$ given by $$\left\{\E y\left(y^{p^{n}}=x\wedge p^{n}\underline{v}(y)=\g\wedge\underline{\ac}(y)^{p^{n}}=\sigma(\ac_v(a))\right):n<\omega\right\},$$ once we prove that $\pi(x)$ is finitely satisfiable.  
    
    To this end, let $n<\omega$. Note that $\ac_v(a)^{1/p^{n}}\in Mv=F_5v,$ so there is some $c\in F_5$ such that $v(c)=0$ and $\ac_v(a)^{1/p^{n}}=\res_v(c).$ Applying $\sigma,$ we get that $\sigma(\ac_v(a))^{1/p^{n}}=\sigma(\res_v(c))=\res_w(\iota(c))=\ac_w(\iota(c)),$ i.e. $$\dfrac{\sigma(\ac_v(a))^{1/p^{n}}}{\ac_w(\iota(c))}=1.$$
    Likewise, let $b\in N$ be such that $w(b)=\rho\g/p^{n}.$ Then $\ac_w(b)=\res_w(d)=\ac_w(d)$ for some $d\in N$ with $w(d)=0,$ so $w(b/d)=w(b)=\rho\g/p^{n}$ and $$\ac_w\left(\dfrac{b}{d}\right)=1=\dfrac{\sigma(\ac_v(a))^{1/p^{n}}}{\ac_w(\iota(c))}.$$ 
    Therefore $$\ac_w\left(\iota(c)\cdot\dfrac{b}{d}\right)=\sigma(\ac_v(a))^{1/p^{n}}\text{ and }w\left(\iota(c)\cdot\dfrac{b}{d}\right)=w(b)=\rho\g/p^{n}.$$ 
    If $e=\iota(c)\cdot\dfrac{b}{d},$ then for all $i\in\{0,\.,n\}$ we would have that 
    $$\ac_w\left(e^{p^{n-i}}\right)=\sigma(\ac_v(a))^{1/p^{i}}\text{ and }w\left(e^{p^{n-i}}\right)=\rho\g/p^{i},$$ this is, $e^{p^{n}}$ is a realization of the finite subset $$\left\{\E y\left(y^{p^{i}}=x\wedge p^{i}\underline{v}(y)=\g\wedge\underline{\ac}(y)^{p^{i}}=\sigma(\ac_v(a))\right):i\in\{0,\.,n\}\right\}\subseteq\pi(x),$$ as wanted.

    Put $F_{5,n}:=F_5(a,a^{1/p},\.,a^{1/p^{n}}),$ and suppose that its value group is $p$-divisible and that $\i_{5,n}:F_{5,n}\to N$ induces an $\LL_{\ac}$-embedding $\Sigma_{5,n}$ that extends $\Sigma_5$. Note that these hypotheses hold for $F_{5,0}=F_5(a)$ and $\i_{5,0}=\widetilde{\i},$ given that $vF_5$ is $p$-divisible as it is relatively divisible in the $p$-divisible group $\GG(M).$ 
    The algebraic extension $F_{5,n}(a^{1/p^{n+1}})|F_{5,n}$ is purely-inseparable of degree $p,$ so the restriction of $v$ to $F_{5,n}(a^{1/p^{n+1}})$ is the unique extension of $v$ from $F_{5,n}$ to $F_{5,n}(a^{1/p^{n+1}}).$ Therefore, since $F_5v=Mv,$ $$p=\left[F_{5,n}(a^{1/p^{n+1}}):F_{5,n}\right]=p^{d}\cdot\left(v\left(F_{5,n}(a^{1/p^{n+1}})\right):vF_{5,n}\right)$$ for some $d\geq0.$
    Since the value group of $F_{5,n}$ is $p$-divisible, we get that the extension $F_{5,n}(a^{1/p^{n+1}})|F_{5,n}$ must be unramified. Mutatis mutandis, we get that $$\iota(F_{5,n})(b^{1/p^{n+1}})|\iota(F_{5,n})$$ is an immediate extension with uniquely determined $w$ and $\ac_w.$ We can define the extension $\i_{5,n+1}$ of $\i_{5,n}$ on $F_{5,n}(a^{1/p^{n+1}})$ by setting $\widetilde{\iota}(a^{1/p^{n+1}})=b^{1/p^{n+1}},$ and this would induce a well founded $\LL_{\ac}$-embedding extending $\Sigma_{5,n}$. By induction, we get an $\LL_{\ac}$-embedding $\Sigma_{5,\infty}$ induced by $\i_{5,\infty}:F_5(a^{1/p^{\infty}})\to N,$ that extends $\Sigma_5,$ and the extension $F_5(a^{1/p^{\infty}})|F_5$ is unramified. By Lemma \ref{sepcore}, $M|F_5(a^{1/p^{\infty}})$ is separable. As above, if we wanted $\i_{5,\infty}$ to commute with the $\l$-functions, by Lemma \ref{cosepemb} we just need to prove that $\i_{5,\infty}$ is a separable embedding. Indeed, $\i_{5,\infty}\left(F_5(a^{1/p^{\infty}})\right)=(\i_5F_5)(b^{1/p^{\infty}}),$ and by Lemma \ref{sepcore} again, $N|(\i_5F_5)(b^{1/p^{\infty}})$ is separable. Thus, $\iota_{5,\infty}$ induces a well-defined $\LL$-embedding $\Sigma_{5,\infty}:\langle F_{5,\infty}\rangle\to\NN$ that extends $\Sigma_5.$  

    Since $v(F_5(a^{1/p^{\infty}}))$ may not be relatively divisible in $\GG(M),$ we can repeat the arguments given in \ref{Step6} and \ref{Step7}, replacing $(F_3,\i_3)$ by $(F_5(a^{1/p^{\infty}}),\i_{5,\infty}),$ in order to get some algebraic extension $F_5'|F_5(a^{1/p^{\infty}})$ contained in $M$ together with an extension $\i':F_5'\to N$ of $\i_{5,\infty}$ that induces an $\LL$-embedding $\Sigma':\langle F_5'\rangle\to\NN$ which extends $\Sigma_{5,\infty}$, and such that $vF_5'=v(F_5(a^{1/p^{\infty}}))^{rdh}.$ With this, we get finally that $(F_5,\iota_5)\leq(F_5',\i_5')\in S,$ so $a\in F_5$ by maximality of $(F_5,\i_5).$ This is absurd because $a$ was supposed to be transcendental over $F_5,$ so this case is empty. 

    Note that $F_5$ may not be an algebraic extension of $F_4,$ so $F_5$ may not be henselian. Also, given that $vF_5=\GG(M)$ and $F_5v=\kk(M),$ the extension $M|F_5$ is immediate, and $(F_5,v)$ is Kaplansky.

    \item: \emph{$\iota_5$ extends to $F_5^{h}$} as in \ref{step2}. Therefore, we may assume that $F_5$ is henselian, and nothing changes with respect to the residue fields and value groups

    \item: \emph{$\iota_5$ extends to $F_6=F_5^{rac}.$} First note that $F_5^{rac}=F_5^{rsc}$ because $F_5^{rac}|F_5$ is a sub-extension of the separable extension $M|F_5.$ Second, note that $(F_5^{rac},v)$ is separably algebraically maximal by Lemma \ref{pop}, for $M|F_5^{rac}$ is regular. Let $F_5^{sep}\subseteq F_5^{alg}$ be some fix separable and algebraic closures of $F_5$ respectively, for which $F_5^{rsc}=F_5^{sep}\cap M.$ Let $v'$ be an extension of $v|_{F_5}$ to $F_5^{alg},$ and let $(K,v')\subseteq (F_5^{alg},v')$ be a maximal algebraic immediate extension of $(F_5^{rsc},v).$ It follows that $F_5^{rsc}=F_5^{sep}\cap K,$ since $F_5^{sep}\cap K$ is an immediate separably algebraic extension of $(F_5^{rsc},v)=(F_5^{rac},v)$ and $(F_5^{rac},v)$ is separably algebraically maximal.

    Analogously, if $((\i_5F_5)^{alg},w')$ is an extension of $(\i_5F_5,w)$ and $(K',w')\subseteq((\i_5F_5)^{alg},w')$ is a maximal immediate algebraic extension of $((\i_5F_5)^{rsc},w),$ we conclude that $(\i_5F_5)^{rsc}=(\i_5F_5)^{sep}\cap K'.$ Since $F_5$ and $\i_5F_5$ are henselian and the extensions $(F_5^{rsc}|F_5,v)$ and $((\i_5F_5)^{rsc}|\i_5F_5,w)$ are unramified, we conclude that the corresponding extensions of $v,$ $\ac_v,$ $w$ and $\ac_w$ are unique. Of most importance, by Kaplansky's Uniqueness Theorem \ref{kap}, $\i_5:F_5\to\i_5F_5$ can be extended to a ring isomorphism $\i_6:F_5^{sep}\cap K\to(\i_5F_5)^{sep}\cap K'$ that induces an $\LL_{\ac}$-embedding $\Sigma_{6}:\langle F_5^{rac}\rangle\to\NN.$ We also get that $\i_6$ is a separable embedding, for $M|F_5^{rac}$ and $N|(\i_5F_5)^{rac}$ are separable and $\i_6(F_5^{rac})=(\i_5F_5)^{rac},$ hence $\Sigma_6$ is even an $\LL$-embedding, as wanted. 
    
    \item: \emph{Let $\ol{a}$ be a $p$-basis of $M$ over $F_6.$ Then $\iota_6$ extends to $F_7=F_6(\ol{a})^{rac}.$} 
    To see this, let $S$ be the set of pairs $(\ol{c},\i_{*})$ such that
    \begin{itemize}
        \item $\ol{c}$ is a sub-tuple of $\ol{a},$ and

        \item $\i_{*}:F_6(\ol{c})^{rac}\to N$ is a ring embedding for which $\Sigma_{*}=(\iota_{*},\sigma|_{Fv},\rho|_{vF}):\langle F\rangle\to\NN$ is an $\LL$-embedding that extends $\Sigma_6,$ 
    \end{itemize}

    ordered at each coordinate by inclusion. Then $S$ is closed under unions of chains, and $(\emptyset,\i_6)\in S.$ By Zorn's Lemma, there is a maximal element $(\ol{c},\i_7)\in S.$ Name $F_7:=F_6(\ol{c})^{rac},$ and note that $(F_7,v)$ is separably algebraically maximal Kaplansky by Lemma \ref{pop}, because $M|F_7$ is regular and immediate. We now claim that $F_7=F_6(\ol{a})^{rac},$ i.e. that $\ol{c}=\ol{a}.$ If this is not the case, then there is some $a\in\ol{a}\setminus\ol{c},$ making $F_7(a)|F_7$ is transcendental by Lemma \ref{pindtran}. 
    Since $(F_7(a)|F_7,v)$ is immediate, there is some pseudo-Cauchy sequence $\{a_\nu\}_\nu\subseteq F_7$ without pseudo-limit in $F_7$ such that $\{a_\nu\}_\nu$ pseudo-converges to $a.$ 
    
    \emph{Case 1: $\{a_\nu\}_\nu$ is of transcendental type over $F_7.$} By statement 1 of Fact \ref{tralgtype}, the extension of $v$ from $F_7$ to $F_7(a)$ is uniquely determined, as well as the corresponding extension of $\ac_v$, given that $F_7(a)|F_7$ is in particular unramified.
    The sequence $\{\i_7(a_\nu)\}_\nu\subseteq \i_7F_7$ is necessarily of transcendental type as well. Since $N$ is $|M|^{+}$-saturated, there is some $b_a\in N$ which is a pseudo-limit of $\{\i_7(a_\nu)\}_\nu.$ Then the extension $\widetilde{\iota}$ of $\i_7$ given by $\widetilde{\iota}(a)=b_a$ is, as above, an $\LL_{\ac}$-embedding, again by the uniqueness showed in statement 1 of Fact \ref{tralgtype}. Note that we chose $\widetilde{\iota}(a)=b_a$ to be a pseudo-limit of the sequence $\{\i_7(a_\nu)\}_\nu,$ but \emph{any} pseudo-limit of such sequence would be a suitable choice for $\widetilde{\iota}(a)$, i.e. any element $b'$ satisfying $w(b'-b_a)>w(b_a-\i_7(a_\nu))$ for all $\nu$ or, equivalently, satisfying that $$w(b'-b_a)\geq\Xi_a$$ where $\Xi_a>\sup_{\nu}w(b_a-\i_7(a_\nu)).$ By saturation of $(N,w),$ there is an element $\Xi_a\in wN$ satisfying this property.     
    If we want $\widetilde{\iota}$ to commute with the $\l$-functions, by Lemma \ref{sepcorr} we also have to choose $\widetilde{\iota}(a)$ outside the $p$-closure of $\i_7\ol{c}$ in $N$ over $\i_7F_6,$ i.e. in $N\setminus N^{p}(\i_7F_6)(\i_7\ol{c}).$ By Lemma \ref{vtop}, we just need to argue that the field extension $N|N^{p}(\i_7F_6)(\i_7\ol{c})$ is proper.     

    For any field $K$ of characteristic exponent $p,$ we will write $\imdeg(K):=\imdeg(K|\F_p)$ to denote the (absolute) imperfection degree of $K,$ where $\F_p$ denotes the prime field of $K.$ Since $\ol{a}$ is a $p$-basis of $M$ over $F_7,$ by Lemma \ref{comppind}, we get that $$\begin{cases}
        \imdeg(M)=|\ol{a}|+\imdeg(F_7),\\
        \imdeg(F_7)=|\ol{c}|+\imdeg(F_6), \text{ and}\\
        \imdeg(F_7(a))=|\ol{c}|+1+\imdeg(F_6).
    \end{cases}$$ 
    By hypothesis, $\MM$ and $\NN$ share the same Ershov degree $e.$ This means that either $e=\imdeg(M)=\imdeg(N)$ is finite, or both $\imdeg(M)$ and $\imdeg(N)$ are infinite, in which case $\imdeg(N)>\imdeg(M)$ by saturation of $N$.
    If $N=N^{p}(\i_7F_6)(\i_7\ol{c}),$ given that $\i_7\ol{c}$ is $p$-independent in $N$, we get that 
    \begin{align*}
        \imdeg(N)&=|\i_7\ol{c}|+\imdeg(\i_7F_6)&\imdeg(N)&=|\i_7\ol{c}|+\imdeg(\i_7F_6)\\
        &=|\ol{c}|+\imdeg(F_6)&&=|\ol{c}|+\imdeg(F_6)\\
        &=\imdeg(F_7)&&=\imdeg(F_7)\\
        &<\imdeg(F_7(a))&&=\imdeg(F_7(a))\\
        &\leq \imdeg(M),&&\leq \imdeg(M),
    \end{align*}
    if $|\ol{c}|$ is finite or infinite respectively. Therefore we reach a contradiction if $\imdeg(M)=\imdeg(N)$ is finite, by the strict inequality of the left-hand side column, and we get a contradiction if $\imdeg(M)<\imdeg(N)$ and $|\ol{c}|$ are all infinite, by the inequality of the right-hand side column. 
    
    Now, up to this point, we were able to choose the right $\widetilde{\iota}(a)$ for which $\widetilde{\i}$ induces a well-founded $\LL$-embedding $\widetilde{\Sigma}:\langle F_7(a)\rangle\to\NN$ that extends $\Sigma_7.$ Since the fields $(F_7(a),v)$ and $(\i_7(F_7)(\widetilde{\i}a),w)$ are Kaplansky and the extensions $(F_7(a)^{rac}|F_7(a),v)$ and $\Big((\i_7F_7)(\widetilde{\iota}(a))^{rac}|(\i_7F_7)(\widetilde{\iota}(a)),w\Big)$ are immediate, by Kaplansky's Theorem of uniqueness of maximal algebraic immediate extensions of Kaplansky fields \ref{kap}, we can extend $\widetilde{\iota}$ to an $\LL_{\ac}$-embedding $\i'$ defined over $F_7(a)^{rac},$ and since the extensions are algebraic and $\widetilde{\iota}$ is separable, we get that $\i'$ is separable too. We can finally conclude that $(\ol{c},\i_7)\leq(a^{\frown}\ol{c},\i')\in S,$ so $a\in\ol{c}$ by maximality of $(\ol{c},\i_7).$ But this is absurd, as $a$ was supposed to be an element in $\ol{a}\setminus\ol{c}.$  

    \emph{Case 2: $\{a_\nu\}_\nu$ is of algebraic type over $F_7.$} By Fact \ref{sampc}, we get that $a\in F_7^{c}.$ By Remark \ref{trt}, we may replace $\{a_\nu\}_\nu$ by some Cauchy sequence of \emph{transcendental} type that converges to $a,$ and we can reach a contradiction by arguing as in Case 1, as wanted.
    Note that $M$ and $F_7=F_6(\ol{a})^{rac}$ have the same imperfection degree. 

    \item: \emph{$\iota_7$ extends to $M.$} Indeed, let $S$ be the set of pairs $(F,\i_{*})$ such that 
    \begin{itemize}
        \item $F$ is a subfield of $M$ extending $F_7,$ 
        \item $\i_{*}:F\to N$ is a ring embedding for which $\Sigma_{*}=(\iota_{*},\sigma|_{Fv},\rho|_{vF}):\langle F\rangle\to\NN$ is an $\LL_{\ac}$-embedding that extends $\Sigma_7,$ and
        \item $F$ is relatively algebraically closed in $M,$
    \end{itemize}
    ordered at each coordinate by inclusion.\footnote{Note that, in this step, we only need $\Sigma^{*}$ to be an $\LL_{\ac}$-embedding, instead of a full $\LL$-embedding.} Then $S$ is closed under unions of chains, and $(F_7,\i_7)\in S.$ By Zorn's Lemma, there is a maximal element $(F_8,\i_8)\in S.$ 
    
    Note that $\langle F_8\rangle\models\texttt{SAMK}_e^{\l,\ac}$ by Lemma \ref{popnotsep}, because $F_8$ is relatively algebraically closed in $M$, $M|F_8$ is immediate and the imperfection degree of $F_8$ is the same as the one of $M.$ We claim that $F_8=M.$ Since $F_8$ is relatively algebraically closed in $M,$ it is enough to prove that $M|F_8$ is algebraic.  
    If this is not the case, then there is some $a\in M$ such that $F_8(a)|F_8$ is transcendental. 
    Since $(F_8(a)|F_8,v)$ is immediate, there is some pseudo-Cauchy sequence $\{a_\nu\}_\nu\subseteq F_8$ without pseudo-limit in $F_8$ such that $\{a_\nu\}_\nu$ pseudo-converges to $a.$ 
    
    \emph{Case 1: $\{a_\nu\}_\nu$ is of transcendental type over $F_8.$} By statement 1 of Fact \ref{tralgtype}, the extension of $v$ from $F_8$ to $F_8(a)$ is uniquely determined, as well as the corresponding extension of $\ac_v$, given that $F_8(a)|F_8$ is in particular unramified.
    The sequence $\{\i_8(a_\nu)\}_\nu\subseteq \i_8F_8$ is necessarily of transcendental type as well. Since $N$ is $|M|^{+}$-saturated, there is some $b\in N$ which is a pseudo-limit of $\{\i_8(a_\nu)\}_\nu.$ Then the extension $\widetilde{\iota}$ of $\i_8$ given by $\widetilde{\iota}(a)=b$ is an $\LL_{\ac}$-embedding, by the uniqueness showed in statement 1 of Fact \ref{tralgtype}. 
    
    Now, up to this point, we were able to choose the right $\widetilde{\iota}(a)$ for which $\widetilde{\i}$ induces a well-founded $\LL_{\ac}$-embedding $\widetilde{\Sigma}:\langle F_8(a)\rangle\to\NN$ that extends $\Sigma_8.$ Since the fields $(F_8(a),v)$ and $((\i_8F_8)(b),w)$ are Kaplansky and the extensions $(F_8(a)^{rac}|F_8(a),v)$ and $\Big((\i_8F_8)(b)^{rac}|(\i_8F_8)(b),w\Big)$ are immediate, by Kaplansky's Uniqueness Theorem \ref{kap}, we can extend $\widetilde{\iota}$ to an $\LL_{\ac}$-embedding $\i'$ defined over $F_8(a)^{rac},$ and since the extensions are algebraic and $\widetilde{\iota}$ is separable, we get that $\i'$ is separable too. We can finally conclude that $(F_8,\i_8)\leq(F_8(a)^{rac},\i')\in S,$ so $a\in F_8$ by maximality of $(F_8,\i_8).$ But this is absurd, as $a$ was supposed to be transcendental over $F_8.$ 

    \emph{Case 2: $\{a_\nu\}_\nu$ is of algebraic type over $F_8.$} By Fact \ref{sampc}, we get that $a\in F_8^{c}.$ By Remark \ref{trt}, we may replace $\{a_\nu\}_\nu$ by some Cauchy sequence of \emph{transcendental} type that converges to $a,$ and reach a contradiction by arguing as in Case 1.

    We conclude that $\i_8:M\to N$ is a ring embedding extending $\i_7:F_7\to N.$ Since $M|F_7$ is separable and $M$ and $F_7$ have the same imperfection degree, we conclude that $\i_8$ is separable, by Lemma \ref{fincosep}. Since it also induces an $\LL_{ac}$-embedding $\Sigma_8:\MM\to\NN,$ we conclude that $\Sigma_8$ is even an $\LL$-embedding, as wanted.
\qedhere
\end{enumerate}
\end{proof}

\section{Relative Quantifier Elimination}

Recall that $\LL_\kk$ and $\LL_\GG$ are allowed to have more symbols than $\LL_{ring}$ and $\LL_{og}\cup\{\infty\}$ respectively. If $\LL=\LL(\LL_\kk,\,\LL_\GG),$ then for a finite tuple $y=(y_1,\.,y_n)$ of variables of sort $\kk$ and an $\LL_\kk\,$-formula $\th(y_1,\.,y_n),$ let $R_{\th}(y)$ be a new $n$-ary relation symbol. Respectively, for a finite tuple $z=(z_1,\.,z_n)$ of variables of sort $\GG$ and an $\LL_\GG\,$-formula $\psi(z_1,\.,z_n),$ let $R_{\psi}(z)$ be a new $n$-ary relation symbol.
Let $\LL^{+}$ be the expansion of $\LL$ by the symbols $R_{\th}$ and $R_{\psi}.$ Finally, let $T$ be the $\LL^{+}$-theory given by the union of $\texttt{SAMK}_{e}^{\l,\ac}$ and the axioms $$\begin{cases}
    \A y\Big(R_{\th}(y)\sii\th(y)\Big),\\
    \A z\Big(R_{\psi}(z)\sii\psi(z)\Big)
\end{cases}$$ 
for all $\th$ and $\psi$ as above. In other words, $T$ consist of the morleyization of $\texttt{SAMK}_e^{\l,\ac}$ with respect to $\LL_\kk$ and $\LL_\GG.$ 
\begin{propo}
\label{rqe}
    $T$ has quantifier elimination.
\end{propo}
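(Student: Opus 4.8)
The plan is to deduce this from the Embedding Lemma \ref{emblem} via the usual embedding test for quantifier elimination (see \cite{tz} or the analogous reduction in \cite{mh}). Concretely, it suffices to establish the following embedding--extension property: for all models $\MM,\NN\models T$ with $\MM$ $\aleph_0$-saturated and $\NN$ $|M|^{+}$-saturated, for every $\LL^{+}$-substructure $\AA$ of $\MM$ and every $\LL^{+}$-embedding $g:\AA\to\NN$, there is an $\LL^{+}$-embedding $\MM\to\NN$ extending $g$. Passing from an arbitrary pair of models of $T$ with a common substructure to such a saturated pair, and reading off $T\vdash\A x(\f(x)\sii\psi(x))$ for a quantifier-free $\psi$ formula-by-formula from this property, is routine: it only uses that $\equiv_\AA$ is preserved under elementary extensions and that every single sort of a $\k$-saturated multi-sorted structure is itself $\k$-saturated. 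Note also that this works verbatim for an arbitrary choice of the auxiliary expansions $\LL_\kk,\LL_\GG$, which is precisely why the resulting elimination is resplendent, so there is nothing extra to do for resplendence.

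So I fix $\MM,\NN,\AA,g$ as above. First I would unwind $\AA$: its home sort is a subring $A\subseteq M$, and since $\AA$ is in particular closed under the function symbols $\underline{\l}_{n,m}$ as interpreted in $\MM$, the ring $A$ is $\l$-closed in $M$ in the sense of the paper; hence $\AA=\langle A\rangle$ is exactly the type of substructure the Embedding Lemma takes as input, with $\kk(\AA)=Av$ and $\GG(\AA)=vA$. Next I would manufacture the global auxiliary maps the Embedding Lemma requires. Because each $R_{\th}$ and $R_{\psi}$ is an \emph{atomic} $\LL^{+}$-symbol and $T$ forces $R_{\th}\sii\th$ and $R_{\psi}\sii\psi$, the hypothesis that $g$ is an $\LL^{+}$-embedding says precisely that its residue component $Av\to\kk(N)$ is a partial elementary map from $\kk(\MM)$ to $\kk(\NN)$ for the language $\LL_\kk$, and likewise its value component $vA\to\GG(N)$ is partial elementary for $\LL_\GG$. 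Since $\NN$ is $|M|^{+}$-saturated, the sorts $\kk(\NN)$ and $\GG(\NN)$ are $|M|^{+}$-saturated, hence $|\kk(M)|^{+}$- and $|\GG(M)|^{+}$-saturated; so by a transfinite realization of types I can extend these partial maps to $\LL_\kk$- and $\LL_\GG$-elementary embeddings $\sigma:\kk(M)\to\kk(N)$ and $\rho:\GG(M)\to\GG(N)$ agreeing with $g$ on $Av$ and $vA$.

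Now I would invoke the Embedding Lemma \ref{emblem} with $\iota=g|_{A}$ and with $\sigma,\rho$ as just built: since $(\iota,\sigma|_{Av},\rho|_{vA})=g$ is an $\LL$-embedding $\AA\to\NN$, the lemma produces a ring embedding $\widetilde{\iota}:M\to N$ making $\widetilde{\Sigma}=(\widetilde{\iota},\sigma,\rho):\MM\to\NN$ an $\LL$-embedding extending $g$. Finally I would check that $\widetilde{\Sigma}$ is actually an $\LL^{+}$-embedding; the only thing to verify is preservation of $R_{\th}$ and $R_{\psi}$, and this is immediate because $\sigma$ and $\rho$ were chosen elementary for $\LL_\kk$ and $\LL_\GG$, hence they preserve the formulas $\th$ and $\psi$ that define $R_{\th}$ and $R_{\psi}$ in models of $T$. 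This $\widetilde{\Sigma}$ is the required extension.

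All the genuine mathematical work is carried by the Embedding Lemma, which is already available; inside the present argument the two points that need care are (a) the bookkeeping between $\LL$ and $\LL^{+}$ --- observing both that an $\LL^{+}$-embedding of a substructure restricts to \emph{partial elementary} maps on the two auxiliary sorts (so that globalizing them is even possible) and, conversely, that an $\LL$-embedding whose auxiliary components are elementary is automatically $\LL^{+}$ --- and (b) the matching of saturation hypotheses: the Embedding Lemma demands that $\NN$ be $|M|^{+}$-saturated exactly so that the globalized $\sigma,\rho$ land in $\NN$ itself and not merely in an elementary extension, and this is precisely the degree of saturation the quantifier-elimination criterion lets us assume after passing to suitable elementary extensions. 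Everything else is routine.
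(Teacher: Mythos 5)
Your proposal is correct and follows essentially the same route as the paper: reduce to the saturated embedding-extension criterion, observe that the morleyization symbols $R_\th,R_\psi$ force the auxiliary components of the given embedding to be partial elementary, globalize them by saturation, and feed the result into the Embedding Lemma. The only slip is the claim that $\AA=\langle A\rangle$ with $\kk(\AA)=Av$ and $\GG(\AA)=vA$ --- a substructure of a three-sorted structure may contain auxiliary-sort elements not arising from the home sort, so one should extend the partial elementary maps from all of $\kk(\AA)$ and $\GG(\AA)$ (as the paper does, noting only $\langle A\rangle\subseteq\AA$); this is a one-line repair that does not affect the argument.
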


\begin{remark}
\label{respleq}
    Since $\LL_\kk$ and $\LL_\GG$ are allowed to have more symbols than $\LL_{ring}$ and $\LL_{og}\cup\{\infty\}$ respectively, then Proposition \ref{rqe} says that the $\LL(\LL_{ring},\LL_{og}\cup\{\infty\})$-theory $\texttt{SAMK}_{e}^{\l,\ac}$ \emph{resplendently eliminates quantifiers from the field sort $\KK$}, cf. \cite[Definition A.6]{r}.
\end{remark}

For the proof of Proposition \ref{rqe}, we will use the following well known test for quantifier elimination, which is suitable for an application of the Embedding Lemma \ref{emblem}.
\begin{fact}
\label{qetest}
    The following statements are equivalent.
    \begin{enumerate}[label*={\arabic*.}]
        \item $T$ has quantifier elimination.

        \item For all models $\MM,\NN$ of $T,$ if $\MM$ is countable, $\NN$ is $\aleph_1$-saturated, $\AA$ is a substructure of $\MM$ and $\Sigma:\AA\to\NN$ is an $\LL^{+}$-embedding, then there is some $\LL^{+}$-embedding $\widetilde{\Sigma}:\MM\to\NN$ whose restriction to $\AA$ coincides with $\Sigma.$

        \item For all models $\MM,\NN$ of $T,$ if $\MM$ is $\aleph_0$-saturated, $\NN$ is $|M|^{+}$-saturated, $\AA$ is a substructure of $\MM$ and $\Sigma:\AA\to\NN$ is an $\LL^{+}$-embedding, then there is some $\LL^{+}$-embedding $\widetilde{\Sigma}:\MM\to\NN$ whose restriction to $\AA$ coincides with $\Sigma.$
    \end{enumerate}
\end{fact}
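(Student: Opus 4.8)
The statement is the classical test for quantifier elimination, valid for an arbitrary theory in an arbitrary language, so the plan is simply to carry out its proof in the present notation. It suffices to establish $(1)\Rightarrow(2)$, $(1)\Rightarrow(3)$, $(2)\Rightarrow(1)$ and $(3)\Rightarrow(1)$; the remaining equivalences are then formal.

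For $(1)\Rightarrow(2)$ and $(1)\Rightarrow(3)$ I would first record that, assuming $(1)$, any $\LL^{+}$-embedding $\Sigma\colon\AA\to\NN$ with $\AA$ a substructure of a model $\MM$ of $T$ is automatically partial elementary: given $\varphi(\bar x)$, pick a quantifier-free $\psi(\bar x)$ with $T\vdash\A\bar x\,(\varphi(\bar x)\leftrightarrow\psi(\bar x))$, and then for $\bar a\in A$ one has $\MM\models\varphi(\bar a)$ iff $\MM\models\psi(\bar a)$ iff $\AA\models\psi(\bar a)$ (quantifier-free formulas are absolute between a substructure and its ambient structure) iff $\NN\models\psi(\Sigma\bar a)$ (an embedding preserves and reflects quantifier-free formulas) iff $\NN\models\varphi(\Sigma\bar a)$. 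Then I would enumerate $M=\{m_{i}:i<\kappa\}$ with $\kappa=\aleph_{0}$ in situation $(2)$ and $\kappa=|M|$ in situation $(3)$, and extend $\Sigma$ by recursion on $i$ to partial elementary maps $\Sigma_{i}$ defined on $A\cup\{m_{j}:j<i\}$: at step $i$ the image $\Sigma_{i}\big(\tp^{\MM}(m_{i}/A\cup\{m_{j}:j<i\})\big)$ is a consistent complete type over a parameter set of size $<\aleph_{1}$ in case $(2)$ and of size $\le|M|<|M|^{+}$ in case $(3)$, hence realized in $\NN$ by the postulated saturation; setting $\Sigma_{i+1}(m_{i})$ equal to a realization keeps the map partial elementary, and one takes unions at limits. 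The resulting $\widetilde\Sigma\colon\MM\to\NN$ is partial elementary, in particular an $\LL^{+}$-embedding, and extends $\Sigma$.

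For the converse directions I would pass through the amalgamation statement
\begin{center}
$(\ast)$: for all $\MM,\NN\models T$ with a common substructure $\AA$, every quantifier-free $\theta(\bar x,\bar y)$ and every $\bar a\in A$, if $\MM\models\E\bar y\,\theta(\bar a,\bar y)$ then $\NN\models\E\bar y\,\theta(\bar a,\bar y)$.
\end{center}
Assuming $(3)$: pass to an $\aleph_{0}$-saturated $\MM^{\ast}\succeq\MM$ and a $|M^{\ast}|^{+}$-saturated $\NN^{\ast}\succeq\NN$; then $\AA$ is a substructure of $\MM^{\ast}$ and the inclusion $\AA\hookrightarrow\NN^{\ast}$ is an $\LL^{+}$-embedding, so $(3)$ provides an $\LL^{+}$-embedding $h\colon\MM^{\ast}\to\NN^{\ast}$ fixing $\AA$; a witness in $\MM^{\ast}$ of $\E\bar y\,\theta(\bar a,\bar y)$ is sent by $h$ to one in $\NN^{\ast}$ because $\theta$ is quantifier-free, and $\NN\preceq\NN^{\ast}$ yields $\NN\models\E\bar y\,\theta(\bar a,\bar y)$, so $(\ast)$ holds. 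Granting $(\ast)$, quantifier elimination follows by the usual compactness argument: by prenex normal form and induction on the number of quantifier blocks it suffices to eliminate an innermost existential block, i.e. to show that each $\varphi(\bar x)=\E\bar y\,\theta(\bar x,\bar y)$ with $\theta$ quantifier-free is $T$-equivalent to a quantifier-free formula; writing $\Gamma(\bar x)$ for the set of quantifier-free $T$-consequences of $\varphi$, one shows $T\cup\Gamma(\bar c)\vdash\varphi(\bar c)$ by taking, for a model $\MM$ of $T\cup\Gamma(\bar a)$, a model $\NN$ of $T\cup\qftp^{\MM}(\bar a)\cup\{\varphi(\bar c)\}$ (consistent precisely because $\MM\models\Gamma(\bar a)$), identifying $\langle\bar a\rangle^{\MM}$ with $\langle\bar b\rangle^{\NN}$ along $\bar a\mapsto\bar b$, and applying $(\ast)$ with the two models interchanged to transfer $\varphi(\bar b)$ back to $\varphi(\bar a)$; compactness then equates $\varphi$ with a finite conjunction of members of $\Gamma$. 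This proves $(3)\Rightarrow(1)$; for $(2)\Rightarrow(1)$ the identical argument works once one inserts downward L\"owenheim--Skolem steps so that the source model of each application of $(2)$ is countable, which is enough since the compactness argument requires $(\ast)$ only for pairs of countable models.

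The delicate point is the passage from the embedding-extension properties to quantifier elimination. A single invocation of $(3)$ or $(2)$ only yields an $\LL^{+}$-embedding, which a priori preserves quantifier-free formulas and, upward, existential ones, but not arbitrary formulas; obtaining full quantifier elimination must therefore go through the amalgamation statement $(\ast)$ together with the compactness argument over the space of quantifier-free types, rather than through a naive induction on formula complexity, and in the situation of $(2)$ one has the additional nuisance of keeping every auxiliary model countable so that $(2)$ remains applicable at each step.
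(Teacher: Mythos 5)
The paper offers no proof of this statement: it is quoted as a ``well known test for quantifier elimination,'' and only the implication $(3)\Rightarrow(1)$ is ever used (in the proof of Proposition~\ref{rqe}, where the Embedding Lemma supplies exactly the hypothesis of item $(3)$). Your proof is the standard textbook argument and is correct: the saturation-driven transfinite extension for $(1)\Rightarrow(2),(3)$ (after noting that under QE every embedding of a substructure is partial elementary), the reduction of the converses to the amalgamation statement $(\ast)$, and the compactness argument over the quantifier-free type $\Gamma(\bar x)$ are all carried out accurately. You also correctly identify the delicate point, namely that a single application of $(2)$ or $(3)$ yields only an embedding rather than an elementary map, so one must route the argument through $(\ast)$. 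The one caveat concerns $(2)\Rightarrow(1)$: your downward L\"owenheim--Skolem step produces a \emph{countable} model only when $\LL^{+}$ is countable, whereas the paper places no cardinality restriction on $\LL_\kk$ and $\LL_\GG$ (and hence none on $\LL^{+}$, which adds a relation symbol for every $\LL_\kk\,$- and $\LL_\GG\,$-formula). Item $(2)$ should therefore be read under an implicit countability hypothesis on the language; this does not affect the paper's results, since only $(3)\Rightarrow(1)$ is invoked, and your proof of that implication is valid for languages of arbitrary cardinality.
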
 

\begin{proof}[Proof of Proposition \ref{rqe}]
Let $\MM,\NN,\AA$ and $\Sigma=(\iota,\sigma_0,\rho_0)$ be as in statement 3 of Fact \ref{qetest}. Since $\Sigma$ is an $\LL^{+}$-embedding, we get that $\a\in R_{\th}^{\AA}$ if and only if $\sigma_0(\a)\in R_{\th}^{\NN}$ and $\g\in R_{\psi}^{\AA}$ if and only if $\rho_0(\g)\in R_{\psi}^{\NN}$ for all $\LL_\kk\,$-formulas $\th(y),$ all $\LL_\GG\,$-formulas $\psi(z)$ and all suitable tuples $\alpha$ of $\kk(A)$ and tuples $\g$ of $\GG(A).$  

Hence, both $\sigma_0:\kk(A)\to\kk(N)$ and $\rho_0:\GG(A)\to\GG(N)$ are partial elementary maps with respect to $\LL.$ Indeed, if $\a$ is a tuple from $\kk(A)$ and $\g$ is a tuple from $\GG(A),$ then
    \begin{align*}
        \MM\models\theta(\a)&\Sii\MM\models R_{\th}(\a) & \MM\models\psi(\g)&\Sii\MM\models R_{\psi}(\g)\\
        &\Sii\NN\models R_{\th}(\sigma_0(\a)) &&\Sii\NN\models R_{\psi}(\rho_0(\g)) \\
        &\Sii\NN\models\th(\sigma_0(\alpha)), &&\Sii\NN\models\psi(\rho_0(\gamma))
    \end{align*}
for any $\LL_\kk\,$-formula $\th(y)$ and any $\LL_\GG\,$-formula $\psi(z).$ Since $\NN$ is $|M|^{+}$-saturated, we can extend the partial elementary maps $\sigma_0$ and $\rho_0$ to the whole of $\kk(M)$ and $\GG(M)$ respectively via a standard back-and-forth argument, i.e., there is some $\LL_\kk\,$-elementary map $\sigma:\kk(M)\to\kk(N)$ and some $\LL_\GG\,$-elementary map $\rho:\GG(M)\to\GG(N)$ such that $\sigma|_{\kk(A)}=\sigma_0$ and $\rho|_{\GG(A)}=\rho_0.$  
Note that $A$ is a $\l$-closed subring of $M,$ so $\langle A\rangle\subseteq\AA$ and $\sigma_0$ and $\rho_0$ is defined in $Av$ and $vA,$ respectively.  
By the Embedding Lemma \ref{emblem} applied to $\langle A\rangle$, we find some ring embedding $\widetilde{\iota}:M\to N$ such that $\widetilde{\Sigma}=(\widetilde{\iota},\sigma,\rho):\MM\to\NN$ is an $\LL$-embedding extending $\Sigma.$ Since $\sigma$ is $\LL_\kk\,$-elementary and $\rho$ is $\LL_\GG\,$-elementary, we get equivalently that $\widetilde{\Sigma}$ is an $\LL^{+}$-embedding, as wanted.    
\end{proof}

Unwinding the definitions, we get the following corollary.

\begin{cor}
\label{formbyform}
    Let $\LL=\LL(\LL_\kk,\,\LL_\GG)$ and let $x,y,z$ be tuples of variables of sort $\KK,\kk$ and $\GG,$ respectively. Any $\LL$-formula $\Phi(x,y,z)$ is equivalent, modulo $\texttt{SAMK}_{e}^{\l,\ac},$ to a boolean combination of the following formulas:
    \begin{enumerate}[wide, label*={\arabic*.}]
        \item $t(x)=0,$
        
        \item $\th(\underline{ac}(t_1(x)),\.,\underline{ac}(t_n(x)),y),$ where $\th$ is an $\LL_\kk\,$-formula, 

        \item $\psi(\underline{v}(t_1(x)),\.,\underline{v}(t_n(x)),z),$ where $\psi$ is an $\LL_\GG\,$-formula
    \end{enumerate}
    and $t(x),t_1(x),\.,t_n(x)$ are $\LL$-terms of sort $\KK^{|x|}\to\KK$. In particular, any $\LL$-sentence is equivalent, modulo $\texttt{SAMK}_{e}^{\l,\ac}$, to a boolean combination of sentences from $\LL_\kk$ and $\LL_\GG.$  
\end{cor}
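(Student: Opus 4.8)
The plan is to derive this form-by-form description directly from the resplendent quantifier elimination of Proposition \ref{rqe}, unwinding what the morleyized language $\LL^{+}$ buys us. First I would fix the auxiliary languages $\LL_\kk$ and $\LL_\GG$ to be exactly $\LL_{ring}$ and $\LL_{og}\cup\{\infty\}$, so that $\LL=\LL(\LL_{ring},\LL_{og}\cup\{\infty\})$ is the base three-sorted language, and then pass to the morleyization $\LL^{+}$ together with the theory $T$ of Proposition \ref{rqe}. Given an arbitrary $\LL$-formula $\Phi(x,y,z)$, we regard it as an $\LL^{+}$-formula and apply Proposition \ref{rqe}: it is equivalent modulo $T$ to a quantifier-free $\LL^{+}$-formula, i.e.\ to a boolean combination of atomic $\LL^{+}$-formulas.

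The second step is to classify those atomic $\LL^{+}$-formulas. An atomic $\LL^{+}$-formula is either an $\LL$-atomic formula or one of the new relation symbols $R_\th$ or $R_\psi$ applied to a tuple of terms. Since the sorts are disjoint, an $\LL$-atomic formula is of one of three shapes: (i) an equation $s(x)=t(x)$ between $\KK$-terms, which we rewrite as $s(x)-t(x)=0$, giving form 1; (ii) an $\LL_{ring}$-atomic formula between $\kk$-terms, and the only $\kk$-valued function symbols available are $\underline{\res}$ and $\underline{\ac}$ applied to $\KK$-terms, together with the ring operations on $\kk$, so every $\kk$-term is a ring-polynomial in expressions $\underline{\res}(t_i(x))$, $\underline{\ac}(t_j(x))$ and the $\kk$-variables $y$; using that $\underline{\res}(t(x))=\underline{\ac}(t(x))\cdot[\underline{v}(t(x))\geq 0] $ is not literally a term, I would instead note that on the theory $\texttt{SAMK}_e^{\l,\ac}$ every occurrence of $\underline{\res}(t(x))$ can be replaced by $\underline{\ac}(t(x))$ guarded by a value-group condition $\underline{v}(t(x))\geq 0$ (which is itself of form 3), so after this rewriting the $\kk$-atomic formula becomes an $\LL_\kk$-formula $\th$ in the arguments $\underline{\ac}(t_1(x)),\dots,\underline{\ac}(t_n(x)),y$, giving form 2; similarly (iii) a $\GG$-atomic formula is an $\LL_{og}\cup\{\infty\}$-atomic relation between $\GG$-terms, every $\GG$-term being built from $\underline{v}(t_i(x))$ and the $\GG$-variables $z$, giving form 3. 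Finally, $R_\th(\bar w)$ with $\bar w$ a tuple of $\kk$-terms is by definition $T$-equivalent to $\th(\bar w)$, again reducing to form 2 after the same $\underline{\res}$-to-$\underline{\ac}$ rewriting, and $R_\psi(\bar u)$ is $T$-equivalent to $\psi(\bar u)$, reducing to form 3. Collecting terms, $\Phi$ becomes a boolean combination of formulas of the three listed types; since this equivalence was proved in $T$, which is a definitional (conservative) expansion of $\texttt{SAMK}_e^{\l,\ac}$, and since the three types of formulas are themselves $\LL$-formulas, the equivalence descends to $\texttt{SAMK}_e^{\l,\ac}$. The sentence case is the special case $x,y,z$ empty: a formula of type 1 with no variables is a boolean constant, type 2 becomes an $\LL_\kk$-sentence $\th$, and type 3 becomes an $\LL_\GG$-sentence $\psi$.

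The main obstacle is purely bookkeeping rather than conceptual: one must be careful that the quantifier-free $\LL^{+}$-formula produced by Proposition \ref{rqe} really does decompose sort-by-sort, which relies on the fact that the only cross-sort function symbols are $\underline{v}$, $\underline{\res}$, $\underline{\ac}$ (all going \emph{from} $\KK$) and that there are no symbols sending $\kk$ or $\GG$ back into $\KK$, so that no $\KK$-term can contain a $\kk$- or $\GG$-subterm. The one subtlety worth spelling out is the elimination of $\underline{\res}$ in favour of $\underline{\ac}$ guarded by a valuation inequality, as indicated above; this is where one uses that in an $\ac$-valued field $\underline{\res}(a)$ agrees with $\underline{\ac}(a)$ when $\underline{v}(a)\geq 0$ and equals $0$ otherwise, which is expressible by a case split into formulas of types 2 and 3. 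Everything else is a routine induction on term and formula complexity.
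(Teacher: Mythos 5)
Your proposal follows exactly the route the paper intends (the paper's ``proof'' is literally the phrase ``unwinding the definitions''), and the overall structure --- apply Proposition \ref{rqe}, classify atomic $\LL^{+}$-formulas sort by sort using that no function symbol maps $\kk$ or $\GG$ back into $\KK$, unfold $R_\th$ and $R_\psi$, and descend from the conservative expansion $T$ to $\texttt{SAMK}_e^{\l,\ac}$ --- is correct. Two slips are worth fixing. First, you should \emph{not} fix $\LL_\kk=\LL_{ring}$ and $\LL_\GG=\LL_{og}\cup\{\infty\}$ at the outset: the corollary is stated for arbitrary expansions $\LL_\kk,\LL_\GG$ (this is the resplendence), and Proposition \ref{rqe} is already proved in that generality, so the restriction only weakens what you prove; the identical argument works verbatim for general $\LL_\kk,\LL_\GG$ (atomic $\kk$-sort formulas are then atomic $\LL_\kk$-formulas in $\kk$-terms, etc.). Second, your guard for eliminating $\underline{\res}$ is wrong as stated: in an $\ac$-valued field one has $\underline{\res}(a)=\underline{\ac}(a)$ precisely when $\underline{v}(a)=0$ (or $a=0$), whereas for $\underline{v}(a)>0$ one has $\underline{\res}(a)=0$ but $\underline{\ac}(a)\neq 0$. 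The correct rewriting of an atomic formula containing $\underline{\res}(t(x))$ is the disjunction of the case $\underline{v}(t(x))=0$ with $\underline{\res}(t(x))$ replaced by $\underline{\ac}(t(x))$, and the case $\underline{v}(t(x))\neq 0$ with $\underline{\res}(t(x))$ replaced by $0$; both cases are boolean combinations of forms 2 and 3, so the conclusion is unaffected once the guard is corrected.
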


\begin{cor}
\label{nipn}
Suppose $\LL=\LL(\LL_{ring},\,\LL_{og}\cup\{\infty\})$, and let $n\geq 1$. Any complete NIP$_n$ $\LL$-theory of infinite henselian equi-characteristic $\ac$-valued fields resplendently eliminates field quantifiers. In particular, the theory of any infinite positive characteristic NIP$_n$ $\ac$-valued field resplendently eliminates field quantifiers with respect to the language $\LL.$ 
\end{cor}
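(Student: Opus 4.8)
The plan is to realize every such $T$ as a completion of $\texttt{SAMK}_{e}^{\l,\ac}$ for a suitable Ershov degree $e$, and then quote the relative quantifier elimination of Proposition~\ref{rqe} (equivalently Corollary~\ref{formbyform}). So fix a complete NIP$_n$ $\LL$-theory $T$ of infinite henselian equi-characteristic $\ac$-valued fields and choose $(K,v,\ac)\models T$; since $T$ is complete, $\char K$ is a fixed value, and we may assume $v$ non-trivial (if $v$ is trivial then $\res$ and $\ac$ are the identity and every field-sort formula is already a residue-sort formula, so there is nothing to eliminate). If $\char K=0$, then a henselian valued field of residue characteristic $0$ is defectless, hence admits no proper immediate algebraic extension, hence is separable-algebraically maximal, and the Kaplansky conditions are vacuous; so $(K,v,\ac)$ is a non-trivial \texttt{SAMK} $\ac$-valued field of equi-characteristic $0$, with Ershov degree $e=0$. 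If $\char K=p>0$, then the $\LL_{\ac}$-reduct of $(K,v,\ac)$ --- being a reduct of the NIP$_n$ structure $(K,v,\ac,\l)$ --- is itself an infinite henselian equi-characteristic NIP$_n$ $\ac$-valued field, so by the structure theory of NIP$_n$ henselian valued fields (Anscombe--Jahnke \cite{aj} for $n=1$, Boissonneau \cite{b} in general) it is separable-algebraically maximal Kaplansky. In either case let $e\in\N\cup\{\infty\}$ be the Ershov degree of $K$.

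I would next record that the $\l$-expansion adds no structure. The parameterized $\l$-functions are $\emptyset$-definable in $\LL_{ring}$ on the home sort: the predicates ``$a$ is a $p$-independent $n$-tuple'' and ``$b\in M^{p}(a)$'' are existentially definable (membership in $M^{p}$ being $\E u\,(u^{p}=x)$), the coordinate $\mu_{m}^{a}(b)$ is the unique element of $M^{p}$ witnessing $b=\sum_{m'\in\Mon(n)}\mu_{m'}^{a}(b)\,a^{m'}$ in the basis $\{a^{m'}\}$ of $M^{p}(a)$ over $M^{p}$, and $\l_{m}^{a}(b)$ is its unique $p$-th root (Frobenius being injective). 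Hence, over the class of $\ac$-valued fields of a fixed characteristic, $\LL$ is a definitional expansion of $\LL_{\ac}$: the $\l$-symbols of a model of $T$ are forced to be the canonical $\l$-functions, and NIP$_n$ passes freely between $\LL$ and $\LL_{\ac}$. Combining this with the first paragraph, $(K,v,\ac)$ together with its canonical $\l$-functions is a model of $\texttt{SAMK}_{e}^{\l,\ac}$. Since being \texttt{SAMK} of a fixed equi-characteristic is an elementary property, ``the Ershov degree equals $e$'' is first-order axiomatizable by the schema displayed in the definition of $\texttt{SAMK}_{e}^{\l,\ac}$, and all of these axioms mention only symbols of $\LL$, completeness of $T$ yields $T\vdash\texttt{SAMK}_{e}^{\l,\ac}$; that is, $T$ is a completion of $\texttt{SAMK}_{e}^{\l,\ac}$.

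Finally I would conclude by resplendence. By Proposition~\ref{rqe}, for every pair of expansions $\LL_\kk\supseteq\LL_{ring}$ and $\LL_\GG\supseteq\LL_{og}\cup\{\infty\}$ of the residue- and value-group languages, the morleyization of $\texttt{SAMK}_{e}^{\l,\ac}$ eliminates quantifiers from the field sort; since $T$ --- and any theory extending $T$ in such a larger language --- extends $\texttt{SAMK}_{e}^{\l,\ac}$, the morleyization of $T$ likewise eliminates field quantifiers, i.e.\ $T$ resplendently eliminates field quantifiers (cf.\ Remark~\ref{respleq}). This proves the first assertion. For the second, let $(K,v,\ac)$ be an infinite positive-characteristic NIP$_n$ $\ac$-valued field; its $\l$-expansion $(K,v,\ac,\l)$ has the same definable sets by the observation above, so it remains NIP$_n$; it is henselian by the structure theory in positive characteristic (or one simply keeps ``henselian'' as a hypothesis), and it is equi-characteristic, so its complete $\LL$-theory is covered by the first assertion.

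The only genuinely external ingredient --- and the main point to be careful about --- is the structure theorem ``infinite $+$ henselian $+$ equi-characteristic $+$ NIP$_n$ $\Rightarrow$ separable-algebraically maximal Kaplansky'' of \cite{aj} and \cite{b}, together with (in positive characteristic) the fact that makes the hypothesis ``henselian'' dispensable in the second assertion. Everything else is bookkeeping: checking that the Ershov degree transfers to the complete theory via a first-order schema, that the $\l$-symbols are genuinely definable (so that adjoining them preserves NIP$_n$ and the $\l$-axioms hold automatically), and that field quantifier elimination for $\texttt{SAMK}_{e}^{\l,\ac}$ is inherited by its completion $T$.
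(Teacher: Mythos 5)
Your proposal is correct and follows essentially the same route as the paper: invoke Anscombe--Jahnke (for $n=1$) and Boissonneau (for general $n$) to conclude that the model of $T$ is separable-algebraically maximal Kaplansky of some Ershov degree $e$, recognize $T$ as a completion of $\texttt{SAMK}_e^{\l,\ac}$, and then apply Proposition~\ref{rqe}; and, for the second assertion, use the fact (Johnson for $n=1$, Hempel--Chernikov for general $n$) that henselianity is automatic for infinite positive-characteristic NIP$_n$ fields. Your writeup is actually somewhat more careful than the paper's, which is terse on two points you spell out: that the $\l$-symbols are a definitional expansion of $\LL_{\ac}$ so that NIP$_n$ transfers between the two languages (the paper starts with the $\LL_{\ac}$-structure and only implicitly identifies its $\LL$-theory with $T$), and that the equi-characteristic $0$ case is automatic without any NIP hypothesis. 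One minor imprecision: the predicate ``$a$ is a $p$-independent $n$-tuple'' is $\Pi_1$ rather than existential (it is a conjunction of negated membership statements), though this does not affect the $\emptyset$-definability argument; and the claim ``Ershov degree $e=0$'' in characteristic $0$ is best read as ``the Ershov-degree axioms are vacuous,'' since the defining schema is guarded by $\sum_{i=1}^{p}1_\KK=0$.
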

\begin{proof}
    Let $\MM$ be a NIP$_n$ henselian equi-characteristic $\ac$-valued field put as an $\LL_{ac}(\LL_{ring},\,\LL_{og}\cup\{\infty\})$-structure, and let $\MM^{\l}$ be the expansion of $\MM$ to an $\LL(\LL_{ring},\LL_{og}\cup\{\infty\})$-structure given by the interpretation of its parameterized $\l$-functions. By \cite[Theorem 3.5]{aj} for the case $n=1,$ or by \cite[Theorem 1.1]{b} for $n\geq1,$ we get that the underlying valued field $(M,v)$ of $\MM$ is separably-defectless Kaplansky and henselian, i.e. $\MM^{\l}\models\texttt{SAMK}_e^{\l,\ac}$ for some Ershov degree $e.$ By Proposition \ref{rqe}, the $\LL$-theory of $\MM^{\l}$ resplendently eliminates field quantifiers. In particular, by \cite[Theorem 2.8]{j} for $n=1$ and by \cite[Theorem 3.1]{hc} for $n\geq1,$ if $M$ is infinite and has positive characteristic, then $(M,v)$ is automatically henselian, and thus $\Th_\LL(\MM^{\l})$ resplendently eliminates field quantifiers as explained above.
\end{proof}

Now we turn our attention to existential formulas.

\begin{cor}
\label{delred}
Let $\LL=\LL(\LL_\kk,\,\LL_\GG)$ and let $x,y,z$ be tuples of variables of sort $\KK,\kk$ and $\GG,$ respectively. Any existential $\LL$-formula $\Phi(x,y,z)$ is equivalent, modulo $\texttt{SAMK}_{e}^{\l,\ac},$ to a $\{\vee,\wedge\}$-combination of the following formulas:
    \begin{enumerate}[wide, label*={\arabic*.}]
        \item $t(x)=0$ and their negations,
            
        \item $\th(\underline{ac}(t_1(x)),\.,\underline{ac}(t_n(x)),y),$ where $\th$ is an existential $\LL_\kk\,$-formula, 

        \item $\psi(\underline{v}(t_1(x)),\.,\underline{v}(t_n(x)),z),$ where $\psi$ is an existential $\LL_\GG\,$-formula
    \end{enumerate}
    and $t(x),t_1(x),\.,t_n(x)$ are $\LL$-terms of sort $\KK^{|x|}\to\KK$.
\end{cor}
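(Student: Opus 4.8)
The plan is to derive Corollary \ref{delred} from the abstract criterion of Corollary \ref{delredeq}(2), certifying the required preservation property by means of the Embedding Lemma \ref{emblem}; this is the argument that deduced Corollary \ref{formbyform} from Proposition \ref{rqe}, with existentiality now tracked throughout. Let $\D$ be the set of existential $\LL$-formulas (in the three sorts) and let $\D'$ be the set of all $\{\vee,\wedge\}$-combinations of the formulas displayed in (1)--(3), where those of type (1) are allowed to occur negated and $\th$, $\psi$ range over existential $\LL_\kk$- and $\LL_\GG$-formulas; then $\D'$ is closed under $\wedge$ and $\vee$, contains $\bot$ (as the empty disjunction), and consists of existential formulas. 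By Corollary \ref{delredeq}(2) it suffices to prove: whenever $\MM,\NN\models\texttt{SAMK}_{e}^{\l,\ac}$ and $a=(a_\KK,a_\kk,a_\GG)\in M$, $b=(b_\KK,b_\kk,b_\GG)\in N$ are tuples of matching sorts with $(\MM,a)\Rightarrow_{\D'}(\NN,b)$, one has $(\MM,a)\Rightarrow_{\D}(\NN,b)$.

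As any $\LL$-embedding preserves existential formulas upwards, it is enough to build an $\LL$-embedding $\widetilde{\Sigma}\colon\MM\to\NN$ with $\widetilde{\Sigma}(a)=b$. First I would pass to elementary extensions --- which changes neither the hypothesis $\Rightarrow_{\D'}$ nor the conclusion $\Rightarrow_{\D}$ --- so as to assume $\MM$ is $\aleph_0$-saturated and $\NN$ is $|M|^{+}$-saturated, with $\kk(\NN)$ and $\GG(\NN)$ correspondingly saturated (a partial type over a subset of $\kk(N)$ of size $\leq|M|$ is a partial type over $\NN$). Let $A\subseteq M$ be the home-sort substructure of $\MM$ generated by $a_\KK$: a subring closed under the parameterized $\l$-functions, hence $\l$-closed in $M$, so $\AA=\langle A\rangle$ is valid input for Theorem \ref{emblem}. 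From $(\MM,a)\Rightarrow_{\D'}(\NN,b)$ applied to type-(1) formulas and their negations, $t(a_\KK)=0$ holds precisely when $t(b_\KK)=0$, for every $\LL$-term $t$ in the variables $x$; so $t(a_\KK)\mapsto t(b_\KK)$ defines a ring embedding $\iota\colon A\to N$ commuting with the $\l$-functions and sending $a_\KK$ to $b_\KK$. Applying the hypothesis to the atomic formulas of types (2) and (3) and their negations --- all again of types (2),(3), since the negation of a quantifier-free $\LL_\kk$- or $\LL_\GG$-formula is existential, and recalling that $\res_v(c)$ is determined by $\ac_v(c)$ and $v(c)$ --- the induced partial map $a_\KK\mapsto b_\KK$, $a_\kk\mapsto b_\kk$, $a_\GG\mapsto b_\GG$ preserves quantifier-free $\LL_\kk$- and $\LL_\GG$-formulas in both directions on the residue- and value-group-generators, hence extends to an $\LL_\kk$-embedding $\sigma_1$ of the substructure $D\subseteq\kk(M)$ generated by $\kk(A)\cup\{a_\kk\}$ and to an $\LL_\GG$-embedding $\rho_1$ of the substructure $E\subseteq\GG(M)$ generated by $\GG(A)\cup\{a_\GG\}$.

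To feed the Embedding Lemma I must extend $\sigma_1,\rho_1$ to embeddings of all of $\kk(M)$ and $\GG(M)$; here the \emph{existential} (not merely quantifier-free) force of $\Rightarrow_{\D'}$ enters. Any existential $\LL_\kk$-sentence with parameters in $D$ becomes, upon substituting for the generators of $D$ the field terms defining them, a type-(2) $\D'$-formula in $a$; hence $(\MM,a)\Rightarrow_{\D'}(\NN,b)$ gives $(\kk(M),d)_{d\in D}\Rightarrow_{\E}(\kk(N),\sigma_1 d)_{d\in D}$, and Fact \ref{exemb} together with the saturation of $\kk(N)$ extends $\sigma_1$ to an $\LL_\kk$-embedding $\sigma\colon\kk(M)\to\kk(N)$; likewise, type-(3) formulas extend $\rho_1$ to $\rho\colon\GG(M)\to\GG(N)$. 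Reading off the $\D'$-facts once more, $(\iota,\sigma|_{\kk(A)},\rho|_{\GG(A)})\colon\langle A\rangle\to\NN$ is an $\LL$-embedding (the $v$-, $\res$-, $\ac$- and $\l$-squares commute by construction), so Theorem \ref{emblem} produces $\widetilde{\iota}\colon M\to N$ making $\widetilde{\Sigma}=(\widetilde{\iota},\sigma,\rho)\colon\MM\to\NN$ an $\LL$-embedding extending it, and since it extends $\iota,\sigma,\rho$ it sends $a$ to $b$, as wanted. I expect the one genuinely substantive point to be this promotion of $\sigma_1,\rho_1$ to $\sigma,\rho$ --- the observation that the $\D'$-information about the residue field and the value group is exactly the existential information required by the embedding-extension machinery of Theorem \ref{emblem}; everything else is bookkeeping and a direct appeal to that theorem.
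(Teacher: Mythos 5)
Your proof is correct and follows essentially the same route as the paper's: reduce via Corollary~\ref{delredeq}(2) to a $\D'\Rightarrow\D$ preservation statement, build the $\LL$-embedding on the finitely generated substructure from the quantifier-free $\D'$-facts, promote the residue- and value-group maps to embeddings of all of $\kk(M),\GG(M)$ via Fact~\ref{exemb} (this is exactly where the \emph{existential} force of $\D'$ is consumed), and finish with the Embedding Lemma~\ref{emblem}. The only presentational difference is that you replace $\MM,\NN$ at the outset by sufficiently saturated elementary extensions, whereas the paper keeps the originals and works with explicit $\MM^*,\NN^*$; the two are equivalent bookkeeping. If anything, your version is slightly more careful than the paper's, which phrases the extension step in terms of $\sigma|_{Av}=\sigma_0$ and $\rho|_{vA}=\rho_0$ rather than over all of $\kk(\AA)$ and $\GG(\AA)$ (as is needed to ensure $\sigma(\a)=\b$ and $\rho(\g)=\d$); your introduction of the substructure $D$ handles this explicitly.
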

\begin{proof}
    Let $\D=\E$ be the collection of all existential formulas from $\LL$, and let $\D'$ denote the collection of all $\{\vee,\wedge\}$-combinations of formulas of the form (1), (2) and (3). Note that $\D'$ contains all quantifier-free $\LL$-formulas and is closed under conjunctions and disjunctions.

    Let $\MM,\NN$ be models of $\texttt{SAMK}_{e}^{\l,\ac},$ and let let $a,\a,\g,b,\b,\d$ be finite tuples from $M,$ $\kk(M)$, $\GG(M)$, $N,$ $\kk(N)$ and $\GG(N)$ respectively. Suppose that $|a|=|b|,$ $|\a|=|\b|,$ and $|\g|=|\d|.$ 
    
    By Corollary \ref{delredeq}, it is enough to prove that $(\MM,a,\a,\g)\Rightarrow_{\E}(\NN,b,\b,\d)$ whenever 
    \begin{equation}
    \label{d'equiv}
        (\MM,a,\a,\g)\Rightarrow_{\D'}(\NN,b,\b,\d). 
    \end{equation}
    Let $\MM^{*}$ be an $\aleph_0$-saturated elementary extension of $\MM.$ Let $\AA$ be the $\LL$-substructure of $\MM$ generated by $a,\a$ and $\g$. In particular, $\langle A\rangle\subseteq\AA,$ where $A$ is the universe of the $\KK$-sort of $\AA.$ Since $\D'$ contains all quantifier-free $\LL$-formulas, then Equation \ref{d'equiv} implies that there is an $\LL$-embedding $\Sigma=(\iota,\sigma_0,\rho_0):\AA\to\NN$ mapping $(a,\a,\g)$ to $(b,\b,\d).$ 
    Also, since $\D'$ contains all formulas of the form (2) and (3), we get that $$(\kk(M^{*}),\xi)_{\xi\in Av}\Rightarrow_{\E}(\kk(N),\sigma_0(\xi))_{\xi\in Av}$$
    with respect to the corresponding expansion by constants of $\LL_\kk,$ and $$(\GG(M^{*}),\e)_{\e\in vA}\Rightarrow_{\E}(\GG(N),\rho_0(\e))_{\e\in vA}$$ with respect to the corresponding expansion by constants of $\LL_\GG.$ 
    If $\NN^{*}$ is an $|M^{*}|^{+}$-saturated elementary extension of $\NN,$ then Fact \ref{exemb} yields the existence of two functions $\sigma:\kk(M^{*})\to\kk(N^{*})$ and $\rho:\GG(M^{*})\to\GG(N^{*})$ preserving existential $\LL_\kk\,$- and $\LL_\GG\,$-formulas and such that $\sigma|_{Av}=\sigma_0$ and $\rho|_{vA}=\rho_0$ respectively. This is equivalent to saying that $\sigma$ and $\rho$ are $\LL_\kk\,$- and $\LL_\GG\,$-embeddings, respectively. 
    
    By the Embedding Lemma \ref{emblem}, there is some ring embedding $\widetilde{\iota}:M^{*}\to N^{*}$ inducing an $\LL$-embedding $\widetilde{\Sigma}=(\widetilde{\iota},\sigma,\rho):\MM^{*}\to\NN^{*}$ that extends $\Sigma.$ We can now conclude that $(\MM,a,\a,\g)\Rightarrow_{\E}(\NN,b,\b,\d),$ because if $\f(x,y,z)$ is an existential $\LL$-formula, then $$\MM\models\f(a,\a,\b)\implies\MM^{*}\models\f(a,\a,\b)\implies\NN^{*}\models\f(b,\b,\d)\implies\NN\models\f(b,\b,\d),$$
    where the second implication holds because $\f$ is existential, $\widetilde{\Sigma}$ has domain $\MM^{*}$, $\widetilde{\Sigma}(a,\a,\g)=(b,\b,\d)$ and $\widetilde{\Sigma}$ preserves quantifier-free formulas.
\end{proof}

\section{Ax-Kochen-Ershov Principles}

One of the differences between the following corollaries and the corresponding AKE-principles given in \cite[Theorem 6.3]{fvkp} lies the language, where in their case they work with an expansion by countably many predicates $\LL_Q=\LL_{3s}\cup\{Q_n:n\in\N^{>0}\},$ each to be interpreted across $n<\omega$ as the set of $p$-independent $n$-tuples. Also, our results do not require the imperfection degree of models to be finite.

\begin{cor}[AKE$^{\preceq},$ AKE$^{\equiv}$, AKE$^{\preceq_\E},$ AKE$^{\equiv_\E}$]
    Let $\LL=\LL(\LL_\kk,\,\LL_\GG)$ and let $\MM,\NN\models \texttt{SAMK}_{e}^{\l,\ac}.$ Then: 
    \begin{enumerate}[wide, label*={\arabic*.}]
        \item If $\MM\subseteq\NN,$ then $\MM\preceq\NN$ if and only if $\GG(M)\preceq\GG(N)$ as $\LL_\GG\,$-structures and $\kk(M)\preceq\kk(N)$ as $\LL_\kk\,$-structures.

        \item $\MM\equiv\NN$ if and only if $\GG(M)\equiv\GG(N)$ as $\LL_\GG\,$-structures and $\kk(M)\equiv\kk(N)$ as $\LL_\kk\,$-structures.

        \item If $\MM\subseteq\NN,$ then $\MM\preceq_\E\NN$ if and only if $\GG(M)\preceq_\E\GG(N)$ as $\LL_\GG\,$-structures and $\kk(M)\preceq_\E\kk(N)$ as $\LL_\kk\,$-structures.
    
        \item $\MM\equiv_\E\NN$ if and only if $\GG(M)\equiv_\E\GG(N)$ as $\LL_\GG\,$-structures and $\kk(M)\equiv_\E\kk(N)$ as $\LL_\kk\,$-structures.
    \end{enumerate}
\end{cor}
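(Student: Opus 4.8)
The plan is to read all four principles off the two normal-form results of the previous section, with no further appeal to the Embedding Lemma. Corollary~\ref{formbyform} --- that modulo $\texttt{SAMK}_{e}^{\l,\ac}$ every $\LL$-formula is a boolean combination of a polynomial equation, an $\LL_\kk$-formula evaluated at angular components, and an $\LL_\GG$-formula evaluated at values --- handles items (1) and (2); Corollary~\ref{delred} --- the same normal form for \emph{existential} formulas, now a \emph{positive} $\{\vee,\wedge\}$-combination whose $\LL_\kk$- and $\LL_\GG$-pieces are \emph{existential} --- handles items (3) and (4). I would first dispose of all four forward implications at once: for an $\LL_\kk$-formula $\theta(y)$ and a tuple $\alpha$ from $\kk(M)$ one has $\MM\models\theta(\alpha)\iff\kk(M)\models\theta(\alpha)$ (the $\kk$-sort quantifiers range over $\kk(M)$ and the atomic $\LL_\kk$-subformulas are interpreted there), so any one of $\preceq,\equiv,\preceq_\E,\equiv_\E$ holding between $\MM$ and $\NN$ restricts to the same relation between $\kk(M)$ and $\kk(N)$, and dually between $\GG(M)$ and $\GG(N)$.

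For the converse of (1) and (2), fix an $\LL$-formula $\Phi$; by Corollary~\ref{formbyform}, and since $\MM,\NN\models\texttt{SAMK}_{e}^{\l,\ac}$ while the relations in play respect boolean combinations, it suffices to treat the three atomic shapes listed there. In case (2), $\Phi$ is a sentence, hence an $\LL_\kk$- or an $\LL_\GG$-sentence, whose truth value in $\MM$, resp.\ $\NN$, is its truth value in $\kk(M)$, resp.\ $\kk(N)$ --- or in $\GG(M)$, $\GG(N)$ --- so $\kk(M)\equiv\kk(N)$ and $\GG(M)\equiv\GG(N)$ suffice. In case (1), with $\MM\subseteq\NN$, the equation $t(x)=0$ is absolute because the inclusion is an $\LL$-embedding (so $t^{\MM}(a)=t^{\NN}(a)$), while $\theta(\underline{ac}(t_1(x)),\dots,\underline{ac}(t_n(x)),y)$ applied to $(a,\alpha)$ becomes $\theta$ evaluated at $(\underline{ac}^{\MM}(t_1^{\MM}(a)),\dots,\underline{ac}^{\MM}(t_n^{\MM}(a)),\alpha)$, all of whose entries lie in $\kk(M)$ and are unchanged on passing to $\NN$; thus $\kk(M)\preceq\kk(N)$, applied to these parameters from $\kk(M)$, gives the equivalence. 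The $\LL_\GG$-shape is symmetric via $\GG(M)\preceq\GG(N)$. Hence $\MM\preceq\NN$ and $\MM\equiv\NN$ respectively.

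For the converse of (3) and (4) I would repeat the argument with Corollary~\ref{delred} in place of Corollary~\ref{formbyform}, using that a positive $\{\vee,\wedge\}$-combination of biconditionals is a biconditional, so again each atomic shape can be checked separately: polynomial equations \emph{and their negations} are absolute since $\MM\subseteq\NN$ is an $\LL$-embedding; the shape $\theta(\underline{ac}(\cdots),y)$ with $\theta$ existential is handled by $\kk(M)\preceq_\E\kk(N)$ for (3) and by $\kk(M)\equiv_\E\kk(N)$ for (4); and $\psi(\underline{v}(\cdots),z)$ correspondingly by the hypothesis on $\GG$. The one bookkeeping point is in (4): for a sentence the $t_i$ are constant $\LL$-terms, so $\underline{ac}(t_i)$ is a quantifier-free $\LL_\kk$-definable element of $\kk$ (an integer multiple of $1_\kk$, or $0$) and $\underline{v}(t_i)\in\{0,\infty\}$; substituting these turns the two shapes into genuine existential $\LL_\kk$- and $\LL_\GG$-sentences, and moreover $\kk(M)\equiv_\E\kk(N)$ already forces $\kk(M)$, hence $M$, to have the same characteristic as $N$ (the sentences $1+\dots+1=0$ and their negations being quantifier-free), so the constant (in)equations are absolute.

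I do not expect a genuine obstacle once Corollaries~\ref{formbyform} and~\ref{delred} are available --- the whole argument is definitional unwinding. The two points worth a moment's care are (i) checking that the parameters of the reduced $\LL_\kk$- and $\LL_\GG$-formulas really do lie in $\kk(M)$ and $\GG(M)$, so that the hypotheses --- which are statements about those structures with constants named for their own elements --- apply verbatim, and (ii) using that Corollary~\ref{delred} delivers a \emph{positive} combination, which is exactly what lets the atom-wise equivalences be recombined in the $\preceq_\E$ and $\equiv_\E$ cases.
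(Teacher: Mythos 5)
Your proposal is correct and follows the same route as the paper: the forward implications are immediate by restriction to the sorts, and the converses are exactly the unwinding of Corollary~\ref{formbyform} (for $\preceq$ and $\equiv$) and Corollary~\ref{delred} (for $\preceq_\E$ and $\equiv_\E$) applied to the identity embedding, respectively to sentences. The paper states this more tersely, but your atom-by-atom verification (including the care about parameters landing in $\kk(M)$ and $\GG(M)$, positivity of the combination in the existential case, and the constant-term bookkeeping for sentences) is precisely what the paper's one-line citations are implicitly relying on.
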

\begin{proof}
    All four direct implications are clear. We prove their converse implications. 
    \begin{enumerate}[wide,  label*={\arabic*.}]
        \item Suppose that $\MM\subseteq\NN,$ that $\kk(M)\preceq\kk(N)$ and that $\GG(M)\preceq\GG(M)$. By Corollary \ref{formbyform}, the $\LL$-embedding given by $\Sigma=(\id_M,\id_{\kk(M)},\id_{\GG(M)})$ is in fact an elementary embedding, implying that $\MM\preceq\NN,$ as wanted.

        \item Suppose that $\kk(M)\equiv\kk(N)$ and $\GG(M)\equiv\GG(M).$ By Corollary \ref{formbyform}, any $\LL$-sentence is equivalent, modulo $\texttt{SAMK}_{e}^{\l,\ac},$ to a boolean combination of sentences from $\LL_\kk$ and $\LL_\GG,$ so the hypotheses yield that $\MM\equiv\NN,$ as wanted.

        \item Suppose that $\MM\subseteq\NN,$ that $\kk(M)\preceq_\E\kk(N)$ and that $\GG(M)\preceq_\E\GG(N)$. By Corollary \ref{delred}, the $\LL$-embedding given by $\Sigma=(\id_M,\id_{\kk(M)},\id_{\GG(M)})$ is in fact an existentially closed embedding, implying that $\MM\preceq_\E\NN,$ as wanted.
        
        \item Suppose that $\kk(M)\equiv_\E\kk(N)$ and $\GG(M)\equiv_\E\GG(M).$ By Corollary \ref{delred}, any existential $\LL$-sentence is equivalent, modulo $\texttt{SAMK}_{e}^{\l,\ac},$ to a $\{\vee,\wedge\}$-combination of existential sentences from $\LL_\kk$ and $\LL_\GG,$ so the hypotheses yield that $\MM\equiv_\E\NN,$ as wanted.    
        \qedhere
    \end{enumerate}
\end{proof}

\begin{cor}[Completions]
\label{completions}
    The completions of $\texttt{SAMK}_{e}^{\l,\ac}$ are $\texttt{SAMK}_{e}^{\l,\ac}\cup\Th_{\LL_\kk}(k)\cup\Th_{\LL_\GG}(\G),$ where $k$ is a $p$-closed field and $\G$ is a $p$-divisible ordered abelian group for some characteristic exponent $p.$
\end{cor}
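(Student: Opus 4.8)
The plan is to obtain this as a formal consequence of the relative quantifier elimination established in Corollary~\ref{formbyform} together with the model-construction criterion of Lemma~\ref{cons}.

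First I would fix a characteristic exponent $p$, a $p$-closed field $k$ and a non-trivial $p$-divisible ordered abelian group $\G$ (both conditions being vacuous when $p=1$, i.e.\ in the equi-characteristic $0$ case), and verify that
$$T(k,\G):=\texttt{SAMK}_{e}^{\l,\ac}\cup\Th_{\LL_\kk}(k)\cup\Th_{\LL_\GG}(\G)$$
is a complete $\LL$-theory. For consistency I would invoke Lemma~\ref{cons} with $T_\kk=\Th_{\LL_\kk}(k)$ and $T_\GG=\Th_{\LL_\GG}(\G)$: since $k$ is $p$-closed it satisfies the additive-polynomial scheme demanded of $T_\kk$, since $\G$ is $p$-divisible and non-trivial it is an admissible $T_\GG$, and $p$ lies in $P_1\cap P_2$ in the notation of that lemma, so $\mathcal{T}(e,\Th_{\LL_\kk}(k),\Th_{\LL_\GG}(\G))=T(k,\G)$ is consistent. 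For completeness I would use the last assertion of Corollary~\ref{formbyform}: every $\LL$-sentence $\varphi$ is equivalent modulo $\texttt{SAMK}_{e}^{\l,\ac}$ to a boolean combination of $\LL_\kk$-sentences and $\LL_\GG$-sentences; each of those $\LL_\kk$-sentences is decided by the complete theory $\Th_{\LL_\kk}(k)$ and each of those $\LL_\GG$-sentences by the complete theory $\Th_{\LL_\GG}(\G)$, so the truth value of the boolean combination --- hence of $\varphi$ --- is determined by $T(k,\G)$.

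Second, I would show that every completion of $\texttt{SAMK}_{e}^{\l,\ac}$ arises this way. Given a completion $T^{*}$, pick a model $\MM\models T^{*}$ and let $p$ be the characteristic exponent of the home field $M$. The Kaplansky axioms of $\texttt{SAMK}_{e}^{\l,\ac}$ force $k:=\kk(M)$ to be $p$-closed and $\G:=\GG(M)$ to be $p$-divisible, and $\G$ is non-trivial because the theory asserts that the valuation is non-trivial. Then $T^{*}=\Th_\LL(\MM)\supseteq T(k,\G)$; since $T(k,\G)$ is complete by the previous step and $T^{*}$ is consistent, we conclude $T^{*}=T(k,\G)$.

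I do not anticipate a genuine obstacle: the result is purely formal given the tools already available. The only points requiring a little care are the bookkeeping with the characteristic exponent --- so that the phrasing ``$p$-closed''/``$p$-divisible'' correctly degenerates in the equi-characteristic $0$ case --- and checking that the hypotheses of Lemma~\ref{cons} are literally met, in particular that $\Th_{\LL_\kk}(k)$ proves the additive-polynomial scheme (because $k$ is $p$-closed) and that $\Th_{\LL_\GG}(\G)$ has a $p$-divisible model (namely $\G$ itself).
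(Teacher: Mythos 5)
Your proof is correct and follows essentially the same route as the paper, which derives the corollary in one line from the AKE$^{\equiv}$ principle --- itself an immediate consequence of Corollary~\ref{formbyform}, which you apply directly. You additionally spell out the consistency of $T(k,\G)$ via Lemma~\ref{cons} and the non-triviality of $\G$, details the paper leaves implicit but which are indeed needed to get the full ``if and only if'' characterization of the set of completions.
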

\begin{proof}
    The result follows from the AKE$^{\equiv}$ principle above.
\end{proof}

\begin{cor}
    Let $\MM\models\texttt{SAMK}_{e}^{\l,\ac}.$ Then $\kk(M)$ and $\GG(M)$ are orthogonal and stably-embedded in $\MM.$ 
\end{cor}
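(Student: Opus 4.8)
The plan is to read both properties off the formula-by-formula normal form of Corollary~\ref{formbyform}. Recall that \emph{$\kk(M)$ is stably embedded in $\MM$} means that every subset of a cartesian power of $\kk(M)$ which is $\MM$-definable with parameters from $M$ is already definable in the pure $\LL_\kk\,$-structure $\kk(M)$ using parameters from $\kk(M)$ (and symmetrically for $\GG(M)$), while \emph{orthogonality of $\kk(M)$ and $\GG(M)$} means that every $\MM$-definable subset of $\kk(M)^{k}\times\GG(M)^{l}$ is a finite union of rectangles $X_i\times Y_i$ with each $X_i$ definable in $\kk(M)$ and each $Y_i$ definable in $\GG(M)$. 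Since $\MM\models\texttt{SAMK}_{e}^{\l,\ac}$, all equivalences from Corollary~\ref{formbyform} hold in $\MM$.

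First I would fix an arbitrary $\LL$-formula $\Phi(u,v,w,y,z)$ in which $u$ is a tuple of sort $\KK$, $v$ and $y$ are tuples of sort $\kk$, and $w$ and $z$ are tuples of sort $\GG$; think of $y,z$ as the displayed variables and of $u,v,w$ as placeholders for parameters $a\in M$, $b\in\kk(M)$, $c\in\GG(M)$. The key structural observation is that no function symbol of $\LL$ has target sort $\KK$ while taking an argument of sort $\kk$ or $\GG$ (the only function symbols into $\KK$ are the ring operations on $\KK$ and the $\l$-functions), so every $\LL$-term of sort $\KK$ occurring in $\Phi$ is built from $\KK$-variables alone, i.e.\ is of the form $t(u)$. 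Applying Corollary~\ref{formbyform} with the residue-sort tuple taken to be the concatenation of $v$ and $y$ and the value-sort tuple the concatenation of $w$ and $z$, we may rewrite $\Phi$ modulo $\texttt{SAMK}_{e}^{\l,\ac}$ as a boolean combination of atoms of the three displayed types, all of whose $\KK$-terms are of the form $t(u)$. Substituting $u:=a$, $v:=b$, $w:=c$, the atoms $t(u)=0$ become closed statements ($\top$ or $\bot$ in $\MM$); each atom of the second type becomes $\th(\underline{\ac}(t_1(a)),\dots,\underline{\ac}(t_n(a)),b,y)$, an $\LL_\kk\,$-formula in $y$ with parameters $\underline{\ac}(t_i(a)),b\in\kk(M)$; and each atom of the third type becomes $\psi(\underline{v}(t_1(a)),\dots,\underline{v}(t_n(a)),c,z)$, an $\LL_\GG\,$-formula in $z$ with parameters $\underline{v}(t_i(a)),c\in\GG(M)$.

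For stable embeddedness of $\kk(M)$ I would let $z$ be empty: then $\Phi(y;a,b,c)$ is a boolean combination of $\LL_\kk\,$-formulas in $y$ over $\kk(M)$ together with the constants $\top,\bot$, hence is equivalent in $\MM$ to an $\LL_\kk\,$-formula in $y$ over $\kk(M)$, so the subset of $\kk(M)^{|y|}$ it defines is $\kk(M)$-definable; the statement for $\GG(M)$ follows identically with $y$ empty. For orthogonality I would keep both $y$ and $z$: now $\Phi(y,z;a,b,c)$ is a boolean combination of finitely many $\LL_\kk\,$-formulas $\alpha_j(y)$ over $\kk(M)$, finitely many $\LL_\GG\,$-formulas $\beta_j(z)$ over $\GG(M)$, and constants. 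Because $y$ and $z$ are disjoint variable tuples, putting this combination into disjunctive normal form over the atoms $\alpha_j,\beta_j$ yields an equivalent formula $\bigvee_{i=1}^{N}\big(\alpha'_i(y)\wedge\beta'_i(z)\big)$ with each $\alpha'_i$ an $\LL_\kk\,$-formula over $\kk(M)$ (possibly $\top$) and each $\beta'_i$ an $\LL_\GG\,$-formula over $\GG(M)$ (possibly $\top$); hence the subset of $\kk(M)^{|y|}\times\GG(M)^{|z|}$ defined by $\Phi(y,z;a,b,c)$ equals $\bigcup_{i=1}^{N}\alpha'_i(\kk(M))\times\beta'_i(\GG(M))$, a finite union of rectangles of the required form.

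There is no genuine obstacle here; the only point requiring care is the bookkeeping around the sorts: one must check that after substitution the residue images $\underline{\ac}(t_i(a))$ and the value images $\underline{v}(t_i(a))$ genuinely lie in $\kk(M)$ and $\GG(M)$ respectively, so that the resulting formulas are honestly over $\kk(M)$, resp.\ $\GG(M)$; and one should note that, since the reduction of Corollary~\ref{formbyform} is carried out on parameter-free formulas, the defining $\LL_\kk$- and $\LL_\GG$-formulas produced above depend uniformly on the parameters, so in fact $\kk$ and $\GG$ are stably embedded and orthogonal as sorts of $\texttt{SAMK}_{e}^{\l,\ac}$ and not merely in the fixed model $\MM$. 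One may also record, if desired, that $\underline{\res}$ needs no separate treatment, being definable from $\underline{\ac}$ and the $\GG$-sort condition $\underline{v}(\cdot)\geq 0$, which is already absorbed into the normal form of Corollary~\ref{formbyform}.
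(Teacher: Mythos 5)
Your proposal is correct and follows essentially the same route as the paper: both apply Corollary \ref{formbyform}, observe that the $\KK$-sort terms involve only $\KK$-parameters and hence evaluate to constants (so the atoms $t(a)=0$ reduce to $\top$ or $\bot$), and conclude that the remaining data is a boolean combination of $\LL_\kk\,$-formulas over $\kk(M)$ and $\LL_\GG\,$-formulas over $\GG(M)$, which gives stable embeddedness directly and orthogonality after regrouping into rectangles. Your explicit remarks on the sort-bookkeeping for terms, the disjunctive normal form step, and the uniformity in parameters are just more detailed versions of what the paper leaves implicit.
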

\begin{proof}
    Let $n,m\in\N$ and let $X\subseteq\kk(M)^{n}\times\GG(M)^{m}$ be an $\LL$-definable set. By Corollary \ref{formbyform}, $X$ is the trace in $\MM$ of a boolean combination of formulas without free variables from $\KK$ of the form $\th(y,\a)\wedge\psi(z,\g),$ where $y$ is an $n$-tuple of variables of sort $\kk,$ $z$ is an $m$-tuple of variables of sort $\GG$, $\a$ is a tuple of elements from $\kk(M),$ $\g$ is a tuple of elements from $\GG(M),$ $\th$ is an $\LL_\kk\,$-formula and $\psi$ is an $\LL_\GG\,$-formula. This proves orthogonality. Note that we can dismiss formulas of the form $t(a)=0$ or $t(a)\neq0$ appearing in the definition of $X$ and where $a$ is a finite tuple of elements of $M$ and $t$ is an $\LL$-term of sort $\KK^{|a|}\to\KK,$ because we can replace them, say, by the $\LL_\kk\,$-formulas $0=0$ or $0=1$ if $t^{\MM}(a)=0$ or $t^{\MM}(a)\neq0$ respectively, and still obtain a formula defining $X.$  
    
    If $X\subseteq \kk(M)^{n}$ is an $\LL$-definable set, then the same argument above proves that $X$ is the trace in $\MM$ of a formula of the form $\th(y,\a)$ where $y,\a$ and $\th$ are as above, proving stable embeddedness of $\kk(M)$ in $\MM.$ Once again, we may dismiss formulas of the form $t(a)=0$ or of the form $\psi(\g),$ where $a$ is a tuple of elements of $M,$ $t$ is an $\LL$-term of sort $\KK^{|a|}\to\KK,$ $\psi$ is an $\LL_\GG\,$-formula and $\g$ is a tuple of elements of $\GG(M).$ Mutatis mutandis, we get that $\GG(M)$ is stably embedded in $\MM$ as well.    
\end{proof}

Finally, we turn our attention to Anscombe's and Fehm's treatment of decidability and computability of theories of valued fields as presented in \cite{af}, in order to get the following corollaries about countable theories of $\texttt{SAMK}_e^{\l,\ac}$ valued fields.

\begin{cor}
\label{mtosamk}
    Let $p$ be either 1 or a prime number, let $T_{\kk}$ be an $\LL_\kk\,$-theory of $p$-closed fields of characteristic exponent $p$, let $T_{\GG}$ be an $\LL_\GG\,$-theory of $p$-divisible ordered abelian groups, and let $e\in\N\cup\{\infty\}.$
    Suppose that $\LL=\LL(\LL_\kk,\,\LL_\GG)$ is countable and $T=\mathcal{T}(e,T_{\kk},T_{\GG})$ is computably enumerable. Then 
    \begin{enumerate}[wide, label*={\arabic*.}]
        \item $T\leq_T T_{\kk}\oplus T_{\GG}.$  

        \item If $T_\kk$ and $T_\GG$ are complete, then $T_\E\leq_T (T_{\kk})_\E\oplus (T_{\GG})_\E.$ 
    \end{enumerate}
\end{cor}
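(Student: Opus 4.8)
The plan is to run both reductions on one template: effectivize the relative quantifier elimination established above to rewrite a (possibly existential) $\LL$-sentence as an equivalent boolean combination of sentences from the two auxiliary languages, put that combination in conjunctive normal form, and decide provability of each clause by one round of oracle queries. The arithmetical heart is Corollary~\ref{disj}: a clause $\th\vee\psi$ with $\th$ over $\LL_\kk$ and $\psi$ over $\LL_\GG$ is provable from $T$ precisely when $\th$ is provable in $T_\kk$ or $\psi$ is provable in $T_\GG$. I will assume $T_\kk$ and $T_\GG$ are consistent; otherwise $T$ is inconsistent, one of the two halves of the oracle is the full set of sentence codes of its language, and both reductions are trivial.

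\emph{Statement 1.} By the last sentence of Corollary~\ref{formbyform}, every $\LL$-sentence $\varphi$ is equivalent modulo $\texttt{SAMK}_e^{\l,\ac}$, hence modulo $T$, to a boolean combination of $\LL_\kk$- and $\LL_\GG$-sentences. Since $T$ is computably enumerable and the relevant sets of formulas are computable, the effective translation lemma whose proof sketch precedes Corollary~\ref{mto} produces a computable map $\tau$ with $T\vdash\varphi\leftrightarrow\tau\varphi$, where $\tau\varphi$ is such a boolean combination. I would then rewrite $\tau\varphi$ in conjunctive normal form and, in the $r$-th clause, collect the $\LL_\kk$-literals into one $\LL_\kk$-sentence $\th^{(r)}$ and the $\LL_\GG$-literals into one $\LL_\GG$-sentence $\psi^{(r)}$, so that $T\vdash\varphi$ iff $T\vdash\th^{(r)}\vee\psi^{(r)}$ for all $r$; by Corollary~\ref{disj}(1) the $r$-th clause is provable iff $T_\kk\vdash\th^{(r)}$ or $T_\GG\vdash\psi^{(r)}$. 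These finitely many conditions are decided by finitely many queries to the oracle $T_\kk\oplus T_\GG$ (after the evident computable recoding of Gödel numbers between $\LL$ and its sublanguages), so $T\leq_T T_\kk\oplus T_\GG$.

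\emph{Statement 2.} The argument is the same with ``existential'' inserted throughout. Corollary~\ref{delred} writes an existential $\LL$-sentence, modulo $\texttt{SAMK}_e^{\l,\ac}$, as a $\{\vee,\wedge\}$-combination of existential $\LL_\kk$-sentences, existential $\LL_\GG$-sentences, and atoms $t=0$ or $t\neq0$ with $t$ a closed $\KK$-term. Each such $t$ evaluates in every model to an integer --- every parameterized $\l$-function takes value $0$ on a tuple from the prime ring, since each entry of such a tuple lies in $M^p$, so the tuple is not $p$-independent --- whence, using the equi-characteristic axioms, $t=0$ (resp.\ $t\neq0$) is $\texttt{SAMK}_e^{\l,\ac}$-equivalent to the atomic (resp.\ existential, via $\E w\,(uw=1)$) $\LL_\kk$-sentence expressing that the characteristic divides that integer, computably in $t$. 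Absorbing these atoms, every existential $\LL$-sentence is, modulo $T$, equivalent to a $\{\vee,\wedge\}$-combination of existential $\LL_\kk$- and $\LL_\GG$-sentences, and, $T$ being computably enumerable, the translation lemma again supplies a computable $\tau$ realizing this. I would put $\tau\varphi$ in conjunctive normal form, group each clause as $\th^{(r)}\vee\psi^{(r)}$ with $\th^{(r)},\psi^{(r)}$ existential, and invoke Corollary~\ref{disj}(2) --- available since $T_\kk$ and $T_\GG$ are complete --- to get $T\vdash\th^{(r)}\vee\psi^{(r)}$ iff $(T_\kk)_\E\vdash\th^{(r)}$ or $(T_\GG)_\E\vdash\psi^{(r)}$; finitely many queries to the oracle $(T_\kk)_\E\oplus(T_\GG)_\E$ then decide whether $\varphi\in T_\E$, so $T_\E\leq_T(T_\kk)_\E\oplus(T_\GG)_\E$.

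I do not expect a serious obstacle: the boolean bookkeeping, the recoding of Gödel numbers, the handling of the closed-term atoms, and the degenerate clauses $\top$ and $\bot$ (the latter occurring only when $T$ is inconsistent, which under the standing hypotheses never happens by Lemma~\ref{cons}) are all routine, and the one load-bearing ingredient --- that a mixed disjunction is provable exactly when one of its one-sorted disjuncts is --- is exactly Corollary~\ref{disj}, which itself rests on the model-construction Lemma~\ref{cons}. The only point demanding care is keeping every step of the translation and of the clause-by-clause decision uniformly computable, which is what the computable-enumerability hypothesis on $T$ is there to guarantee.
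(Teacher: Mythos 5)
Your proof is correct and follows essentially the same route as the paper's: use the computable translation guaranteed by Corollary~\ref{mto} (applied via Corollaries~\ref{formbyform} and~\ref{delred}) to rewrite a sentence as a boolean combination of $\LL_\kk$- and $\LL_\GG$-sentences, pass to conjunctive normal form, and decide each clause $\th\vee\psi$ by one round of oracle queries using Corollary~\ref{disj}. Your explicit handling of the closed-$\KK$-term atoms of form (1) in Corollary~\ref{delred} --- absorbing them into $\LL_\kk$-sentences by exploiting the fixed characteristic exponent and the vanishing of the $\l$-functions on the prime ring --- usefully makes precise a step the paper's proof leaves implicit under a ``mutatis mutandis.''
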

\begin{proof}
    \begin{enumerate}[wide,  label*={\arabic*.}]
        \item Let $\D$ be the full set of $\LL$-formulas, and let $\D'$ be the set of boolean combinations of formulas of the form (2) and (3) of Corollary \ref{formbyform}. 
        These sets of formulas are easily seen to be computable. In this case $T_{\D}=T$, so $T\leq_mT_{\D'}$ by Corollary \ref{mto} and Corollary \ref{formbyform}. It follows in particular that $T\leq_TT_{\D'}.$ 
        Now we have to argue that $T_{\D'}\leq_T T_\kk\oplus T_\GG.$ Indeed, given sentence $\f\in T_{\D'},$ there are some $n\in\N,$ some $\LL_\kk\,$-sentences $\th_1,\.,\th_n$ and some $\LL_\GG\,$-sentences $\psi_1,\.,\psi_n$ such that $\vdash\f\sii\bigwedge_{i=0}^{n}(\th_i\vee\psi_i),$ and moreover there is an algorithm that computes the sentences $\{\th_1,\.,\th_n,\psi_1,\.,\psi_n\}$ when given $\f$ as an input. It follows that $T\vdash\f$ if and only if $T\vdash\th_i\vee\psi_i$ for each $i\in\{1,\.,n\}.$ By Item 1 of Corollary \ref{disj}, for each $i\in\{1,\.,n\},$ we get that $T\vdash\th_i\vee\psi_i$ if and only if $T_\kk\vdash\th_i$ or $T_{\GG}\vdash\psi_i,$ which are questions we can ask to the oracles of $T_\kk$ and $T_{\GG}.$ The result follows.

        \item Let $\D=\E$ be the set of existential $\LL$-formulas, and let $\D'$ be the set of $\{\vee,\wedge\}$-combinations of formulas of the form (1), (2) and (3) of Corollary \ref{delred}. Since $\D$ and $\D'$ are again computable, following the same argument as above we get that $T_\E\leq_T T_{\D'}.$ As above, mutatis mutandis, and using Item 2 of Corollary \ref{disj}, we obtain $T_{\D'}\leq_T(T_\kk)_\E\oplus(T_\GG)_\E,$ as wanted.
        \qedhere
    \end{enumerate} 

\end{proof}

\begin{cor}[Decidability]
    Suppose that $\LL=\LL(\LL_\kk,\,\LL_\GG)$ is countable. Let $p$ be $1$ or a prime number, let $T_{\kk}$ be a computably enumerable $\LL_\kk\,$-theory of $p$-closed fields of characteristic exponent $p$, let $T_{\GG}$ be a computably enumerable $\LL_\GG\,$-theory of $p$-divisible ordered abelian groups, and let $e\in\N\cup\{\infty\}.$ Let $T$ denote the $\LL$-theory $\mathcal{T}(e,T_\kk,T_\GG).$
    \begin{enumerate}[wide, label*={\arabic*.}]
        \item $T$ is decidable if and only if so are $T_\kk$ and $T_\GG$.

        \item If $T_\kk$ and $T_{\GG}$ are complete, then $T_\E$ is decidable if and only if so are $(T_\kk)_\E$ and $(T_\GG)_\E$.
    \end{enumerate}
\end{cor}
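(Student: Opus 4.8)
The plan is to derive the ``if'' directions from the two reductions of Corollary \ref{mtosamk} and the ``only if'' directions from the consistency criterion of Lemma \ref{cons}. First I would record two preliminaries. Since $\LL$ is countable, the axiom schema of $\texttt{SAMK}_e^{\l,\ac}$ is recursive, so $T=\texttt{SAMK}_e^{\l,\ac}\cup T_\kk\cup T_\GG$ is computably enumerable and Corollary \ref{mtosamk} applies. Moreover every model of $T_\kk$ is a $p$-closed field of characteristic exponent $p$ and every model of $T_\GG$ is a ($p$-divisible) ordered abelian group, so in the notation of Lemma \ref{cons} one has $p\in P_1\cap P_2$; this persists after adjoining to $T_\kk$ any sentence consistent with it, and in particular $T$ itself is consistent.

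For the ``if'' direction of (1): if $T_\kk$ and $T_\GG$ are decidable then so is $T_\kk\oplus T_\GG$, hence $T$ is decidable by $T\leq_T T_\kk\oplus T_\GG$ (Corollary \ref{mtosamk}.1). For the ``if'' direction of (2), where $T_\kk$ and $T_\GG$ are moreover complete: if $(T_\kk)_\E$ and $(T_\GG)_\E$ are decidable then so is $(T_\kk)_\E\oplus(T_\GG)_\E$, hence $T_\E$ is decidable by $T_\E\leq_T(T_\kk)_\E\oplus(T_\GG)_\E$ (Corollary \ref{mtosamk}.2).

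For the ``only if'' directions I would establish that $T\vdash\phi$ if and only if $T_\kk\vdash\phi$, for every $\LL_\kk$-sentence $\phi$, and symmetrically with $\GG$ in place of $\kk$. The implication from right to left holds because $T$ proves every axiom of $T_\kk$. Conversely, if $T_\kk\nvdash\phi$, pick a model $k\models T_\kk\cup\{\neg\phi\}$; then $T_\kk\cup\{\neg\phi\}$ is still a consistent theory of $p$-closed fields of characteristic exponent $p$, so $P_1\cap P_2\neq\emptyset$ and Lemma \ref{cons} gives that $\mathcal{T}(e,T_\kk\cup\{\neg\phi\},T_\GG)$ is consistent; any of its models is a model of $T$ whose residue field satisfies $\neg\phi$, hence satisfies the $\LL_\kk$-sentence $\neg\phi$, so $T\nvdash\phi$. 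This equivalence shows that the set of theorems of $T_\kk$ is the intersection of the set of theorems of $T$ with the (decidable, since $\LL$ is countable) set of $\LL_\kk$-sentences, and that the set of theorems of $(T_\kk)_\E$ is the intersection of the set of theorems of $T_\E$ with the (decidable) set of existential $\LL_\kk$-sentences; the same holds with $\GG$ in place of $\kk$. Since intersecting with a decidable set preserves decidability, decidability of $T$ forces that of $T_\kk$ and $T_\GG$, and decidability of $T_\E$ forces that of $(T_\kk)_\E$ and $(T_\GG)_\E$, completing both biconditionals.

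Once Lemma \ref{cons} and Corollary \ref{mtosamk} are in hand the argument is routine, so I do not expect a real obstacle; the points to keep in view are that countability of $\LL$ is precisely what makes $T$ computably enumerable and the sets of ($\LL_\kk$- or $\LL_\GG$-, existential or not) sentences decidable, and that the equal-characteristic-zero case $p=1$, in which ``$p$-closed'' and ``$p$-divisible'' are vacuous constraints, is covered by Lemma \ref{cons} and Corollary \ref{mtosamk} without change.
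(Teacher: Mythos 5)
Your proof is correct and follows the paper's route: the ``if'' directions come from the Turing reductions $T\leq_T T_\kk\oplus T_\GG$ and $T_\E\leq_T(T_\kk)_\E\oplus(T_\GG)_\E$ of Corollary \ref{mtosamk}, which is exactly the content of the paper's one-line proof. Your explicit treatment of the ``only if'' directions --- using Lemma \ref{cons} to show that $T\vdash\phi$ iff $T_\kk\vdash\phi$ for $\LL_\kk\,$-sentences $\phi$ (in substance, Corollary \ref{disj} with the other disjunct taken trivial), so that $T_\kk$ and $T_\GG$ (and their existential fragments) are recovered by intersecting with a decidable set of sentences --- is a correct and welcome elaboration of a step the paper leaves implicit.
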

\begin{proof}
This follows from Corollary \ref{mtosamk} and the well known fact that if $A\leq_T B$ and $B$ is computable, then $A$ is computable. 
\end{proof}

\subsection*{Acknowledgments}
The author would like to thank his supervisor Sylvy Anscombe for encouraging him to pursue this project and for all her valuable advice, Arno Fehm and Franziska Jahnke for giving valuable suggestions on the subject and the writing of the document, and the latter for hosting the author at the Mathematics Cluster of Excellence of the University of Münster during the preparation of this article. Special thanks are due to Franz-Viktor Kuhlmann, whose suggestions allowed the author to improve early versions of this manuscript. The author would also like to thank the Collège des Écoles Doctorales and the SMARTS-UP project of Université Paris Cité, which made his visit to Münster possible under the BDMI mobility program. The author has received funding from the European Union’s Horizon 2020 research and innovation program under the Marie Sk\lw odowska-Curie grant agreement No 945332. 
\includegraphics*[scale = 0.028]{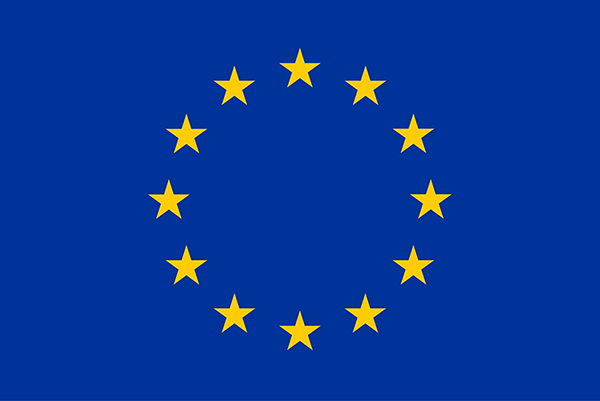}

\bibliographystyle{alpha}
\bibliography{bib}

\end{document}